\newtheorem*{cor}{Corollary}
\newtheorem*{lem}{Lemma}
\newtheorem*{prop}{Proposition}
\theoremstyle{definition}
\theoremstyle{definition}
\newtheorem{thm}{Theorem}
\newenvironment{pf}{\proof}{\endproof}
\newcounter{cnt}
\newenvironment{enumerit}{\begin{list}{{\hfill\rm(\roman{cnt})\hfill}}{%
\settowidth{\labelwidth}{{\rm(iv)}}\leftmargin=\labelwidth%
\advance\leftmargin by \labelsep\rightmargin=0pt\usecounter{cnt}}}{\end{list}}
\theoremstyle{remark}
\numberwithin{equation}{section} 
\begin{document}

\newcommand{\thmref}[1]{Theorem~\ref{#1}}
\newcommand{\secref}[1]{Section~\ref{#1}}
\newcommand{\lemref}[1]{Lemma~\ref{#1}}
\newcommand{\propref}[1]{Proposition~\ref{#1}}
\newcommand{\corref}[1]{Corollary~\ref{#1}}
\newcommand{\remref}[1]{Remark~\ref{#1}}
\newcommand{\defref}[1]{Definition~\ref{#1}}
\newcommand{\er}[1]{(\ref{#1})}
\newcommand{\id}{\operatorname{id}}
\newcommand{\tensor}{\otimes}
\newcommand{\nc}{\newcommand}
\newcommand{\rnc}{\renewcommand}
\newcommand{\qbinom}[2]{\genfrac[]{0pt}0{#1}{#2}}
\nc{\cal}{\mathcal} \nc{\goth}{\mathfrak} \rnc{\bold}{\mathbf}
\renewcommand{\frak}{\mathfrak}
\newcommand{\supp}{\operatorname{supp}}
\newcommand{\desc}{\operatorname{desc}}
\newcommand{\Maj}{\operatorname{Maj}}
\renewcommand{\Bbb}{\mathbb}
\nc\bomega{{\mbox{\boldmath $\omega$}}} \nc\bpsi{{\mbox{\boldmath $\Psi$}}}
 \nc\balpha{{\mbox{\boldmath $\alpha$}}}
 \nc\bpi{{\mbox{\boldmath $\pi$}}}

\newcommand{\lie}[1]{\mathfrak{#1}}
\makeatletter
\def\section{\def\@secnumfont{\mdseries}\@startsection{section}{1}%
  \z@{.7\linespacing\@plus\linespacing}{.5\linespacing}%
  {\normalfont\scshape\centering}}
\def\subsection{\def\@secnumfont{\bfseries}\@startsection{subsection}{2}%
  {\parindent}{.5\linespacing\@plus.7\linespacing}{-.5em}%
  {\normalfont\bfseries}}
\makeatother
\def\subl#1{\subsection{}\label{#1}}
 \nc{\Hom}{\operatorname{Hom}}
\nc{\End}{\operatorname{End}} \nc{\wh}[1]{\widehat{#1}} \nc{\Ext}{\operatorname{Ext}} \nc{\ch}{\text{ch}} \nc{\ev}{\text{ev}}
\nc{\Ob}{\operatorname{Ob}} \nc{\soc}{\operatorname{soc}} \nc{\rad}{\operatorname{rad}}
\def\Im{\operatorname{Im}}
\def\gr{\operatorname{gr}}
\def\wt{\operatorname{wt}}

\nc\eva{\text{ev}_a} \nc{\krsm}{KR^\sigma(m\omega_i)} \nc{\krsmzero}{KR^\sigma(m_0\omega_i)} \nc{\krsmone}{KR^\sigma(m_1\omega_i)}
 \nc{\vsim}{v^\sigma_{i,m}}

 \nc{\Cal}{\cal} \nc{\Xp}[1]{X^+(#1)} \nc{\Xm}[1]{X^-(#1)}
\nc{\on}{\operatorname} \nc{\Z}{{\bold Z}} \nc{\J}{{\cal J}} \nc{\C}{{\bold C}} \nc{\Q}{{\bold Q}}
\renewcommand{\P}{{\cal P}}
\nc{\N}{{\Bbb N}} \nc\boa{\bold a} \nc\bob{\bold b} \nc\boc{\bold c} \nc\bod{\bold d} \nc\boe{\bold e} \nc\bof{\bold f} \nc\bog{\bold g}
\nc\boh{\bold h} \nc\boi{\bold i} \nc\boj{\bold j} \nc\bok{\bold k} \nc\bol{\bold l} \nc\bom{\bold m} \nc\bon{\bold n} \nc\boo{\bold o}
\nc\bop{\bold p} \nc\boq{\bold q} \nc\bor{\bold r} \nc\bos{\bold s} \nc\bou{\bold u} \nc\bov{\bold v} \nc\bow{\bold w} \nc\boz{\bold z}
\nc\boy{\bold y} \nc\ba{\bold A} \nc\bb{\bold B} \nc\bc{\bold C} \nc\bd{\bold D} \nc\be{\bold E} \nc\bg{\bold G} \nc\bh{\bold H} \nc\bi{\bold I}
\nc\bj{\bold J} \nc\bk{\bold K} \nc\bl{\bold L} \nc\bm{\bold M} \nc\bn{\bold N} \nc\bo{\bold O} \nc\bp{\bold P} \nc\bq{\bold Q} \nc\br{\bold R}
\nc\bs{\bold S} \nc\bt{\bold T} \nc\bu{\bold U} \nc\bv{\bold V} \nc\bw{\bold W} \nc\bz{\bold Z} \nc\bx{\bold x}

\nc\beq{\begin{equation}} \nc\enq{\end{equation}} \nc\lan{\langle} \nc\ran{\rangle} \nc\bsl{\backslash} \nc\mto{\mapsto}
\nc\lra{\leftrightarrow} \nc\hra{\hookrightarrow} \nc\sm{\smallmatrix} \nc\esm{\endsmallmatrix} \nc\sub{\subset} \nc\ti{\tilde} \nc\nl{\newline}
\nc\fra{\frac} \nc\und{\underline} \nc\ov{\overline} \nc\ot{\otimes} \nc\bbq{\bar{\bq}_l} \nc\bcc{\thickfracwithdelims[]\thickness0}
\nc\ad{\text{\rm ad}} \nc\Ad{\text{\rm Ad}}  \nc\Ind{\text{\rm Ind}} \nc\Res{\text{\rm Res}} \nc\Ker{\text{\rm Ker}} \rnc\Im{\text{Im}}
\nc\sgn{\text{\rm sgn}} \nc\tr{\text{\rm tr}} \nc\Tr{\text{\rm Tr}}
 \nc\card{\text{\rm card}}
\nc\bst{{}^\bigstar\!} \nc\he{\heartsuit} \nc\clu{\clubsuit} \nc\spa{\spadesuit} \nc\di{\diamond}

\nc\al{\alpha} \nc\bet{\beta} \nc\ga{\gamma} \nc\de{\delta} \nc\ep{\epsilon} \nc\io{\iota} \nc\om{\omega} \nc\si{\ensuremath{\sigma}}
\nc\tildsi{\ensuremath{\widetilde{\sigma}}} \rnc\th{\theta} \nc\ka{\kappa} \nc\la{\lambda} \nc\ze{\zeta}

\nc\vp{\varpi} \nc\vt{\vartheta} \nc\vr{\varrho}

\nc\Ga{\Gamma} \nc\De{\Delta} \nc\Om{\Omega} \nc\Si{\Sigma} \nc\Th{\Theta} \nc\La{\Lambda}

\nc\blambda{{\mbox{\boldmath $\Lambda$}}}

\nc\e[1]{E_{#1}} \nc\ei[1]{E_{\delta - \alpha_{#1}}} \nc\esi[1]{E_{s \delta - \alpha_{#1}}} \nc\eri[1]{E_{r \delta - \alpha_{#1}}}
\nc\ed[2][]{E_{#1 \delta,#2}} \nc\ekd[1]{E_{k \delta,#1}} \nc\emd[1]{E_{m \delta,#1}} \nc\erd[1]{E_{r \delta,#1}}  \nc\ef[1]{F_{#1}}
\nc\efi[1]{F_{\delta - \alpha_{#1}}} \nc\efsi[1]{F_{s \delta - \alpha_{#1}}} \nc\efri[1]{F_{r \delta - \alpha_{#1}}} \nc\efd[2][]{F_{#1
\delta,#2}} \nc\efkd[1]{F_{k \delta,#1}} \nc\efmd[1]{F_{m \delta,#1}} \nc\efrd[1]{F_{r \delta,#1}} \nc{\ug}{\bu^{fin}}
\nc\frakmu{\ensuremath{\lambda(\frac{h_\mu}{2})}} \nc\frakhn{\ensuremath{\lambda(\frac{h_n}{2})}}
\nc\frakshort{\ensuremath{\lambda(\frac{h_{\theta_s}}{2})}} \nc\hatshort{\ensuremath{\lambda(\hat h_{\theta_s})}}
\nc\hathn{\ensuremath{\lambda(\hat h_{n})}} \nc\nonzerowts{(R_0)^+ \cup (R_1)^+} \nc\X{\mathbf X}

\nc\fa{\frak a} \nc\fb{\frak b} \nc\fc{\frak c} \nc\fd{\frak d} \nc\fe{\frak e} \nc\ff{\frak f} \nc\fg{\frak g} \nc\fh{\frak h} \nc\fj{\frak j}
\nc\fk{\frak k} \nc\fl{\frak l} \nc\fm{\frak m} \nc\fn{\frak n} \nc\fo{\frak o} \nc\fp{\frak p} \nc\fq{\frak q} \nc\fr{\frak r} \nc\fs{\frak s}
\nc\ft{\frak t} \nc\fu{\frak u} \nc\fv{\frak v} \nc\fz{\frak z} \nc\fx{\frak x} \nc\fy{\frak y}

\nc\fA{\frak A} \nc\fB{\frak B} \nc\fC{\frak C} \nc\fD{\frak D} \nc\fE{\frak E} \nc\fF{\frak F} \nc\fG{\frak G} \nc\fH{\frak H} \nc\fJ{\frak J}
\nc\fK{\frak K} \nc\fL{\frak L} \nc\fM{\frak M} \nc\fN{\frak N} \nc\fO{\frak O} \nc\fP{\frak P} \nc\fQ{\frak Q} \nc\fR{\frak R} \nc\fS{\frak S}
\nc\fT{\frak T} \nc\fU{\frak U} \nc\fV{\frak V} \nc\fZ{\frak Z} \nc\fX{\frak X} \nc\fY{\frak Y} \nc\tfi{\ti{\Phi}} \nc\bF{\bold F}
\nc\bbz{\mathbb{Z}}

\nc\ua{\bold U_\A}

\nc\qinti[1]{[#1]_i} \nc\q[1]{[#1]_q}
\nc\xpm[2]{E_{#2 \delta \pm \alpha_#1}}  
\nc\xmp[2]{E_{#2 \delta \mp \alpha_#1}} \nc\xp[2]{E_{#2 \delta + \alpha_{#1}}} \nc\xm[2]{E_{#2 \delta - \alpha_{#1}}} \nc\hik{\ed{k}{i}}
\nc\hjl{\ed{l}{j}} \nc\qcoeff[3]{\left[ \begin{smallmatrix} {#1}& \\ {#2}& \end{smallmatrix} \negthickspace \right]_{#3}} \nc\qi{q} \nc\qj{q}

\nc\ufdm{{_\ca\bu}_{\rm fd}^{\le 0}}


\nc\isom{\cong}

\nc{\pone}{{\Bbb C}{\Bbb P}^1} \nc{\pa}{\partial}
\def\h{\cal h}
\def\L{\cal L}
\nc{\F}{{\cal F}} \nc{\Sym}{{\goth S}} \nc{\A}{{\cal A}} \nc{\arr}{\rightarrow} \nc{\larr}{\longrightarrow}

\nc{\ri}{\rangle} \nc{\lef}{\langle} \nc{\W}{{\cal W}} \nc{\uqatwoatone}{{U_{q,1}}(\su)} \nc{\uqtwo}{U_q(\goth{sl}_2)} \nc{\dij}{\delta_{ij}}
\nc{\divei}{E_{\alpha_i}^{(n)}} \nc{\divfi}{F_{\alpha_i}^{(n)}} \nc{\Lzero}{\Lambda_0} \nc{\Lone}{\Lambda_1} \nc{\ve}{\varepsilon}
\nc{\phioneminusi}{\Phi^{(1-i,i)}} \nc{\phioneminusistar}{\Phi^{* (1-i,i)}} \nc{\phii}{\Phi^{(i,1-i)}} \nc{\Li}{\Lambda_i}
\nc{\Loneminusi}{\Lambda_{1-i}} \nc{\vtimesz}{v_\ve \otimes z^m}

\nc{\asltwo}{\widehat{\goth{sl}_2}} \nc\eh{\frak h^e} \nc\loopg{\ensuremath{L(\frak g)}} \nc\twistg{\ensuremath{L^{\sigma}(\frak g ) }}
\nc\twistatwo{\ensuremath{L^{\widetilde{\sigma}}(A_2)}} \nc\lambdatwo{\ensuremath{\frac{\lambda(h_0)}{2}}} \nc\eloopg{L^e(\frak g)}
\nc\ebu{\bu^e} \nc\lamhi{\ensuremath{\lambda(h_i)}} \nc\lamha{\ensuremath{\lambda(h_\alpha)}} \nc\lamhmu{\ensuremath{\lambda(h_\mu)}}

\nc\btildepi{\ensuremath{{\mbox{\boldmath$\tilde{\pi}$}}}} \nc\teb{\tilde E_\boc} \nc\tebp{\tilde E_{\boc'}}

\title{Weyl modules for the twisted loop algebras }
\author{Vyjayanthi Chari, Ghislain Fourier and Prasad Senesi}
\thanks{VC was partially supported by the NSF grant DMS-0500751}
\address{ VC:  Department of Mathematics, University of California,\
Riverside, CA 92521}
\address{GF: Mathematisches Institut, Universit\"at zu K\"oln, 50931 K\"oln}
\address{ PS:  Department of Mathematics, University of California,\
Riverside, CA 92521}

\begin{abstract}
  The notion of a Weyl module, previously defined for the untwisted affine
algebras, is extended here to the twisted affine algebras. We describe an
identification of the Weyl modules for the twisted affine algebras with
suitably chosen Weyl modules for the untwisted affine algebras. This
identification allows us to use known results in the untwisted case to compute
the dimensions and characters of the Weyl modules for the twisted algebras.
\end{abstract}

\pagestyle{plain} \maketitle
\section{Introduction} The notion of Weyl modules for the
untwisted affine Lie algebras was introduced in \cite{CPweyl} and
was motivated by an attempt to understand the category of finite
dimensional representations of the untwisted quantum affine
algebra. Specifically, the Weyl modules were conjectured to be the
$q=1$ limit of certain irreducible representations of the quantum
affine algebras. It was proved that the conjecture was true for
$\lie {sl_2}$ and that  this conjecture would follow if the
dimensions of the Weyl modules were known.  H. Nakajima has
pointed out recently that the dimension formula  follows  by using
results of \cite{BN} and \cite{K}.

Another approach to proving the dimension formula for  the Weyl
modules can be found in \cite{CL} for $\lie sl_n$ and in
\cite{FoL} for the general simply laced case. These   papers also
make the connection between Weyl modules and the Demazure modules
for affine Lie algebras and also with the fusion product defined
by \cite{FL}. The approach in these papers is rather simple and
show that one can study the  Weyl modules from a purely classical
viewpoint. Other points of interest and generalizations of these
can be found in \cite{FKL}.

We now turn our attention to the case of the twisted affine
algebras. None of the quantum machinery is available and in fact
there are rather few results on the category of finite dimensional
representations of the twisted quantum affine algebras
\cite{Akasaka}, \cite{CPtw}. These results do show however  that
one can make a similar conjecture; i.e that one can define a
notion of the Weyl module for the twisted affine Lie algebras such
that they are the specializations of irreducible modules in the
quantum case. To do this, one requires the Weyl modules to be
universal in a suitable sense. One of  the difficulties  is in the
case of the algebras of type $A_{2n}^{(2)}$, which are not built up
entirely of algebras isomorphic to $A_1^{(1)}$; and indeed one
needs to understand $A_2^{(2)}$ on its own. Thus, we use results
of \cite{FV}, \cite{Mitz} to arrive at the correct definition of
the Weyl modules.

The next question clearly is to determine the dimensions of the
Weyl modules and also their decomposition as modules for the
underlying finite--dimensional simple Lie algebra. In the
untwisted case these questions can be  answered either  by using
the fusion product of \cite{FL} or the fact that the modules are
specializations of modules for the quantum affine algebra. Both
these techniques are unavailable to us in the twisted case, as far as we know the
notion of fusion product does not admit a generalization to the
twisted algebras. We get around these difficulties by identifying
the Weyl modules for the twisted algebras $X_n^{(m)}$, $m>1$ with
suitably chosen Weyl modules for the untwisted algebra
$X_n^{(1)}$. We then use all the known results in the untwisted
case to complete our analysis of the twisted algebras. In
conclusion, we note that some of the methods we use in this paper
give simpler proofs of some of the results in \cite{CPweyl}.

\section{The untwisted loop algebras and the modules $W(\bpi)$.}
\subsection{} Throughout the paper $\bc$ (resp. $\bc^\times$) denotes the set of complex (resp. non--zero complex) numbers, and $\bz$ (resp. $\bz_+$)
 the set of integers (resp. non--negative) integers. Given a Lie algebra $\lie a$ we denote by $\bu(\lie a)$  the universal enveloping
 algebra of $\lie a$ and by $L(\lie a)$ denotes the loop algebra of $\lie a$. Specifically, we have
 \begin{equation*}L(\frak a) = \frak a\otimes \bc[t,t^{-1}],\end{equation*}
with commutator given by
\begin{equation*}
 [x\otimes t^r, y\otimes t^s]
=[x,y]\otimes t^{r+s}\end{equation*} for $x,y\in\frak a$, $r,s\in\bz$. We identify $\lie a$ with the subalgebra $\lie a\otimes 1$ of $L(\lie
a)$. Given $a\in\bc^\times$, we let $\tau_a:L(\lie a)\to L(\lie a)$ be the automorphism defined by extending $\tau_a(x\otimes t^k)=a^k(x\otimes
t^k)$ for all $x\in\lie g$, $k\in\bz$.

Given $\ell,N\in\bz_+$ and $\boa=(a_1,\cdots
,a_\ell)\in(\bc^\times)^\ell$ let $\lie a_{\boa,N}$  be the quotient
of $L(\lie a)$ by the ideal $\lie
a\otimes\prod_{k=1}^\ell(t-a_k)^N\bc[t,t^{-1}]$.
\begin{lem}\label{chinun} Let $\boa=(a_1,\cdots,a_\ell)\in(\bc^\times)^\ell$ be such that
$\boa$ has distinct coordinates. For all $N\in\bz_+$, we have
$$\lie a_{\boa,N}\cong\oplus_{r=1}^N\lie a_{a_r,N}.$$
\end{lem}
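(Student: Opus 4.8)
The plan is to realize $\lie a_{\boa,N}$ as $\lie a$ tensored with a commutative ring and then to split that ring using the Chinese Remainder Theorem. Put $R=\bc[t,t^{-1}]$ and $f=\prod_{k=1}^\ell(t-a_k)^N\in R$. By definition the Lie ideal one quotients $L(\lie a)=\lie a\otimes_\bc R$ by is $\lie a\otimes_\bc fR$ (it is a Lie ideal because $fR$ is an ideal of $R$), so $\lie a_{\boa,N}=\lie a\otimes_\bc(R/fR)$ as a vector space, and likewise $\lie a_{a_r,N}=\lie a\otimes_\bc\big(R/(t-a_r)^N R\big)$. In every case the induced bracket is $[x\otimes\bar g,\ y\otimes\bar h]=[x,y]\otimes\overline{gh}$.

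Next I would note that $R$ is a principal ideal domain (being a localization of $\bc[t]$) and that the elements $(t-a_1)^N,\dots,(t-a_\ell)^N$ of $R$ are pairwise coprime: for $r\neq s$ the difference $(t-a_r)-(t-a_s)=a_s-a_r$ is a nonzero scalar, so $\gcd(t-a_r,t-a_s)=1$ and hence $\gcd\big((t-a_r)^N,(t-a_s)^N\big)=1$. This is the only place where the distinctness of the coordinates of $\boa$ is used; the assumption $a_k\in\bc^\times$ is needed merely to keep each $t-a_k$ a nonunit of $R$, so that the quotient rings do not collapse. The Chinese Remainder Theorem then yields a $\bc$-algebra isomorphism $\varphi\colon R/fR\xrightarrow{\ \sim\ }\bigoplus_{r=1}^\ell R/(t-a_r)^N R$.

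Finally I would apply the functor $\lie a\otimes_\bc-$. Since tensoring over $\bc$ commutes with finite direct sums, $\id_{\lie a}\otimes\varphi$ is a linear isomorphism $\lie a\otimes_\bc(R/fR)\xrightarrow{\ \sim\ }\bigoplus_{r=1}^\ell\lie a\otimes_\bc\big(R/(t-a_r)^N R\big)$, where the right-hand side carries the direct-sum Lie structure whose summands have the bracket $[x\otimes\bar g,y\otimes\bar h]=[x,y]\otimes\overline{gh}$. Because $\varphi$ is a ring homomorphism, a one-line check on elementary tensors $x\otimes\bar g$ shows that $\id_{\lie a}\otimes\varphi$ intertwines these brackets, hence is an isomorphism of Lie algebras; this is precisely the assertion $\lie a_{\boa,N}\cong\bigoplus_{r=1}^\ell\lie a_{a_r,N}$. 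There is no real obstacle here: the single nonformal ingredient is the pairwise coprimality established above, and everything else is the routine transport of the ring isomorphism $\varphi$ through $\lie a\otimes_\bc-$.
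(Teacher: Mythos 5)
Your proof is correct and follows the same route as the paper: split $\bc[t,t^{-1}]/\prod_k(t-a_k)^N$ via the Chinese Remainder Theorem and then tensor with $\lie a$, which is exactly what the paper does (the paper cites the ring-splitting as "standard" and says the lemma "follows trivially," while you spell out the coprimality and the bracket check). One cosmetic note: the paper's statement has a typo, writing $\oplus_{r=1}^{N}$ where it should be $\oplus_{r=1}^{\ell}$, and your version correctly uses $\ell$.
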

\begin{pf} Since $a_r\ne a_s$ if $1\le r\ne s\le \ell$, it is
standard that
$$\bc[t,t^{-1}]/\prod_{r=1}^\ell(t-a_r)^N\bc[t,t^{-1}]\cong\oplus_{r=1}^{\ell}\bc[t,t^{-1}]/(t-a_r)^N\bc[t,t^{-1}]$$  and the lemma now follows
trivially.
\end{pf}

\subsection{The simple Lie algebras and their representations} Let  $\frak{g}$ be any finite-dimensional  complex simple
Lie algebra and $\lie h$ a Cartan subalgebra of $\lie g$ and
$W_{\lie g}$ the corresponding Weyl group. Let  $R_{\lie g}$ be
the set of roots of $\frak g$ with respect to $\lie h$, $I_{\lie
g}$ an index set for  a set of simple roots (and hence also for
the fundamental weights), $R_{\lie g}^+$  the set of positive
roots, $Q_{\lie g}^+$ (resp. $P_{\lie g}^+$) the $\bz_+$ span of
the simple roots (resp. fundamental weights) and $\theta_{\lie g}$
be the highest root in $R^+_{\lie g}$. Given $\alpha\in R_{\lie
g}$ let $\lie g_\alpha$ be the corresponding root space, we have
$$\lie g=\lie n^-\oplus \lie h\oplus\lie n^+,\ \ \lie n^\pm=\bigoplus_{\alpha\in R^+}\lie
g_{\pm\alpha}.$$  Fix a Chevalley basis $x^\pm_\alpha, h_\alpha$,
$\alpha\in R^+$ for $\lie g$   and set
$$x^\pm_{\alpha_i}=x^\pm_i,\ \ h_{\alpha_i} =h_i,\ \ i\in I.$$ In
particular for $i\in I$, $$[x^+_i,x^-_i]=h_i,\ \ [h_i,x^\pm_i]=\pm 2x_i^\pm.$$

Given  a finite--dimensional representation of $\lie g$ on a
complex vector space $V$, we can write $$V=\oplus_{\mu\in\lie
h^*}V_\mu,\ \ V_\mu=\{v\in V: hv=\mu(h)v\ \ \forall \ h\in\lie
h\}.$$ Set $\wt(V)=\{\mu\in\lie h^*: V_\mu\ne 0\}$. It is
well--known that
$$ V_\mu \ne 0\ \ \implies \mu\in P\ \ {\text {and}}\ \
w\mu\in\wt(V) \ \ \forall \ \ w\in W,$$ and that $V$ is isomorphic
to a direct sum of irreducible representations. The set of
isomorphism classes of irreducible finite--dimensional $\lie
g$--modules is in bijective correspondence with $P^+$ and
 for any $\lambda\in P^+$ let $V_{\lie
g}(\lambda)$ be an element of the corresponding isomorphism class.
Then $V_{\lie g}(\lambda)$ is generated by an element $v_\lambda$
satisfying the relations:
\begin{equation} \lie n^+.v_\lambda=0,\ \ hv_\lambda=\lambda(h)v_\lambda,\ \ (x_i^-)^{\lambda(h_i)+1}v_\lambda=0.
\end{equation}

\subsection{Identities in $\bu(L(\lie g)$} For $i\in I$ it is easy to see that  the elements
$\{x_i^\pm\otimes t^k, h_i\otimes t^k: k\in\bz_+\}$ span a
subalgebra of $L(\lie g)$ which is isomorphic to $L(\lie{sl_2})$.
 We shall need the following formal power series in $u$
 with coefficients in  $\bu(L(\lie{\lie g}))$. For $i\in I$, set
\begin{gather*}
\bop_i^{\pm}(u) =  \text{exp} \left( - \sum_{k=1}^{\infty} \frac{h_i\otimes t^k}{k} u^{k}\right),\\\quad \ \mathbf x^\pm_{i}(u) =
\sum_{k=0}^{\infty}(x_i^\pm\otimes t^k)u^{k+1}, \quad \widetilde{\mathbf x_i}^\pm(u) = \sum_{k=-\infty}^{\infty}(x_i^\pm\otimes t^k) u^{k+1}\\
\end{gather*}
Given a power series $\bof$ in $u$ with coefficients in an algebra
$A$, let $(\bof)_m$ be the coefficient of $u^m$ ($m\in\bz$). The
following result was proved in \cite[Lemma 7.5]{Ga}, (see
\cite[Lemma 1.3]{CPweyl} for the formulation in this notation).
\begin{lem}\label{Garland}Let $r\in\bz_+$. \begin{gather*}
 (x_i^+\otimes t)^{(r)}_{} (x_i^-\otimes 1)^{ (r+1)} =(-1)^r \left(
\mathbf x^-_{i}(u)   \bop_i^+(u) \right)_{r+1} \mod \bu(L(\lie{g}))\widetilde{\mathbf x_i}^+(u).\end{gather*}\hfill\qedsymbol
\end{lem}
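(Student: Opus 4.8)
The plan is to reduce to the rank-one case and compute directly in $\bu(L(\lie{sl_2}))$. As already noted, the elements $x_i^{\pm}\otimes t^k$ and $h_i\otimes t^k$ ($k\in\bz$) span a copy of $L(\lie{sl_2})$ containing every term of the statement, so we may take $\lie g=\lie{sl_2}$ and drop the index $i$; write $e_k=x^{+}\otimes t^k$, $f_k=x^{-}\otimes t^k$, $h_k=h\otimes t^k$. Let $\J=\sum_{k\in\bz}\bu(L(\lie{sl_2}))e_k$ be the left ideal generated by the coefficients of $\widetilde{\mathbf x}^{+}(u)$, and write $\equiv$ for congruence modulo $\J$, so the assertion reads $(x^{+}\otimes t)^{(r)}(x^{-}\otimes1)^{(r+1)}\equiv(-1)^{r}\bigl(\mathbf x^{-}(u)\bop^{+}(u)\bigr)_{r+1}$. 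Since $e_1^{m}\in\J$ for $m\ge1$ we have $\exp(\xi e_1)\equiv1$ for a parameter $\xi$, and since $\J$ is a left ideal, $A\exp(\xi e_1)\equiv A$ for all $A\in\bu(L(\lie{sl_2}))$.

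The computational engine is that $\ad(e_1)$ is locally nilpotent on $L(\lie{sl_2})$: from $[e_1,f_0]=h_1$, $[e_1,h_1]=-2e_2$, $[e_1,e_2]=0$ one gets $\exp(\xi\,\ad e_1)(f_0)=f_0+\xi h_1-\xi^{2}e_2$, so $\exp(\xi e_1)(x^{-}\otimes1)^{(n)}\exp(-\xi e_1)=\tfrac1{n!}\bigl(f_0+\xi h_1-\xi^{2}e_2\bigr)^{n}$. Multiplying on the right by $\exp(\xi e_1)$, using $A\exp(\xi e_1)\equiv A$ together with $\exp(\xi e_1)(x^{-}\otimes1)^{(n)}=\sum_{m\ge0}\xi^{m}(x^{+}\otimes t)^{(m)}(x^{-}\otimes1)^{(n)}$, and comparing the coefficients of $\xi^{r}$ with $n=r+1$, we obtain
\[
(x^{+}\otimes t)^{(r)}(x^{-}\otimes1)^{(r+1)}\ \equiv\ \frac{1}{(r+1)!}\,\bigl[\xi^{r}\bigr]\bigl(f_0+\xi h_1-\xi^{2}e_2\bigr)^{r+1},
\]
where $[\xi^{r}](\cdot)$ denotes the coefficient of $\xi^{r}$. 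On the right the only raising elements are $e_2$ and the $e_3,e_4,\dots$ it spawns upon commuting past the $h_j$'s; since each $\ad(e_j)$ is again locally nilpotent, all of these can be moved to the far right and dropped modulo $\J$, leaving a finite $\bc$-combination of PBW monomials in the $f_j$'s and $h_j$'s. Bringing every $h_j$ to the left of every $f_j$ by $[h_j,f_l]=-2f_{j+l}$, and using that the weight and the total degree in $t$ are conserved throughout, the result must take the form $\sum_{k=0}^{r}f_k\,Q_k$ with each $Q_k$ a polynomial of weighted degree $r-k$ in $h_1,h_2,\dots$.

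It remains to identify the $Q_k$: the claim is that $\sum_{k=0}^{r}f_kQ_k=(-1)^{r}\bigl(\mathbf x^{-}(u)\bop^{+}(u)\bigr)_{r+1}$, equivalently that $Q_k=(-1)^{r}$ times the coefficient $p_{r-k}$ of $u^{r-k}$ in $\bop^{+}(u)=\exp\bigl(-\sum_{l\ge1}\tfrac{h_l}{l}u^{l}\bigr)$ — the latter being pinned down by the Newton-type recursion $jp_j=-\sum_{l=1}^{j}h_lp_{j-l}$, $p_0=1$. This last step is the real content: the exponential appears precisely because $\ad(h_k)$ fails to be nilpotent on the lowering currents, and the sign $(-1)^{r}$ has to be tracked through the reorderings. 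I would carry it out by induction on $r$ — the base case $r=0$ being immediate, both sides being $x^{-}\otimes1$ — probably proving en route the slightly more general identity for $(x^{+}\otimes t)^{(r)}(x^{-}\otimes1)^{(r+s)}$ so that the inductive step closes. This bookkeeping is exactly the computation of \cite[Lemma 7.5]{Ga}, which one may instead simply invoke.
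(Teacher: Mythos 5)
The paper does not actually prove this lemma: it simply cites Garland's Lemma 7.5 (with Chari--Pressley's Lemma 1.3 supplying the translation into the generating-function notation used here), and the statement ends with a qed symbol and no argument. Your sketch of the underlying mechanism — reducing to $L(\lie{sl_2})$, conjugating $(x^-\otimes 1)^{(r+1)}$ by $\exp(\xi\, x^+\otimes t)$ to get $\frac{1}{(r+1)!}\bigl[\xi^r\bigr](f_0+\xi h_1-\xi^2 e_2)^{r+1}$ modulo the left ideal $\J$ — is correct, and since you then defer the remaining coefficient identification (the normal-form bookkeeping that produces $(-1)^r$ times the coefficients of $\bop_i^+(u)$) to Garland as well, you and the paper are ultimately resting on the same external reference.
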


\subsection{The monoid $\cal P^+$} Let $\cal P^+$  be the monoid of $I$--tuples   of
polynomials $\bpi=(\pi_1,\cdots,\pi_n)$ in an indeterminate $u$
with constant term one, with multiplication being defined
component wise. For $i\in I$  and $a\in\bc^\times$,  set
\begin{equation}\label{calp} \bpi_{i,a}=((1-au)^{\delta_{ij}} :j\in I)\in \cal
P^+,
\end{equation}
and for $ \lambda\in P^+$, set $$\ \  \bpi_{\lambda,a} =
\prod_{i\in I}(\bpi_{i,a})^{\lambda(h_i)},\ \ \lambda\ne 0.$$
Clearly any $\bpi^+\in\cal P^+$ can be written uniquely as a
product
$$\bpi^+=\prod_{k=1}^\ell\bpi_{\lambda_i,a_i},$$ for some
$\lambda_1,\cdots,\lambda_\ell\in  P^+$ and distinct elements $a_1,\cdots, a_\ell\in\bc^\times$ and in this case we set
$\bpi^-=\prod_{k=1}^\ell\bpi_{\lambda_i,{} \ {} a_i^{-1}}$. Define a map $\cal P^+\to P^+$ by $\bpi\to\lambda_\bpi=\sum_{i\in
I}\deg(\pi_i)\omega_i.$

\subsection{The modules $W(\bpi)$, $V(\bpi)$.}
 Given $\bpi=(\pi_i)_{i\in I}\in\cal P^+$, let $W(\bpi)$ be the $L(\lie g)$--module  generated by an element $w_\bpi$ with relations:
\begin{gather*} L(\lie n^+)w_{\bpi}=0,\ \ hw_{\bpi}=\lambda_\bpi(h)
w_{\bpi},\ \ (x^-_i)^{\lambda_\bpi(h_i)+1}w_{\bpi}=0,\\
\left(\bop^\pm_{i}(u)-\pi^\pm_i(u)\right)w_{\bpi}=0,
\end{gather*}where $\lambda_\bpi=\sum_{i\in I}(\deg\pi_i)\omega_i$,  $\bpi^+=\bpi$, $i\in I$ and $h\in\lie h$.
It is not hard to see that if we write $\bpi=\prod_{k=1}^\ell \bpi_{\lambda_\ell,a_\ell}$ where $a_1,\cdots ,a_\ell$ are all distinct, then for
$i\in I$
$$\left(\bop^\pm_{i}(u)-\pi_i^\pm(u) \right)w_{\bpi}=0 \ \iff\ \
(h_i\otimes
t^r)w_{\bpi}=\left(\sum_{j=1}^\ell\lambda_j(h_i)a_j^r\right)
w_{\bpi}.$$  Let $b\in\bc^\times$ and let $\tau_bW(\bpi)$ be the
$L(\lie g)$--module obtained by pulling back $W(\bpi)$ through the
automorphism $\tau_b$ of $L(\lie g)$. The next result is standard.
\begin{lem}
\begin{enumerit}
\item Let $\bpi\in\cal P^+$. Then $W(\bpi)=\bu(L(\lie n^-))w_\bpi$, and hence we have,
$$\wt(W(\bpi)\subset\lambda_\bpi-Q^+,\ \ \dim W(\bpi)_{\lambda_\bpi}=1.$$ In particular, the module
 $W(\bpi)$ has a unique irreducible
quotient $V(\bpi)$. \item For $b\in\bc^\times$, we have
$\tau_bW(\bpi)\cong W(\bpi_b)$, where $\bpi=(\pi_i(u))_{i\in I}$ and
$\bpi_b=(\pi_i(b^{-1}u))_{i\in I}$. In particular  we have
$$W(\bpi_{\lambda,a})\cong_{\lie g}
W(\bpi_{\lambda,ab}).$$\end{enumerit}
\end{lem}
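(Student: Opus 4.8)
The plan is to deduce (i) from the triangular decomposition of $L(\lie g)$ together with PBW, and (ii) from the universal property of the Weyl modules and the trivial action of $\tau_b$ on $\lie g$.

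For (i), I would first note that $L(\lie h)$ acts on $w_\bpi$ by scalars: the abelian part $\lie h=\lie h\otimes 1$ acts through $\lambda_\bpi$, while by the equivalence displayed just above the lemma the relation $(\bop^\pm_i(u)-\pi^\pm_i(u))w_\bpi=0$ says precisely that each $h_i\otimes t^r$ ($i\in I$, $r\in\bz$) acts on $w_\bpi$ by a scalar; since the $h_i\otimes t^r$ span $L(\lie h)$, every element of $\bu(L(\lie h))$ acts on $w_\bpi$ by a scalar, i.e. $\bu(L(\lie h))w_\bpi=\bc w_\bpi$. Combining this with $L(\lie n^+)w_\bpi=0$ and the PBW factorization $\bu(L(\lie g))=\bu(L(\lie n^-))\,\bu(L(\lie h))\,\bu(L(\lie n^+))$, every element of $\bu(L(\lie g))$ carries $w_\bpi$ into $\bu(L(\lie n^-))w_\bpi$, so $W(\bpi)=\bu(L(\lie n^-))w_\bpi$. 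As every basis vector of $L(\lie n^-)$ has $\lie h$--weight in $-R^+$, the algebra $\bu(L(\lie n^-))$ is $(-Q^+)$--graded for the adjoint action with degree--$0$ piece $\bc$, so $\wt(W(\bpi))\subset\lambda_\bpi-Q^+$ and $W(\bpi)_{\lambda_\bpi}=\bc w_\bpi$; in particular $\dim W(\bpi)_{\lambda_\bpi}\le 1$. To make this an equality — equivalently to see $w_\bpi\ne 0$ — I would write $\bpi=\prod_{k=1}^\ell\bpi_{\lambda_k,a_k}$ with the $a_k$ distinct and take $V=\bigotimes_{k=1}^\ell\ev_{a_k}^*V_{\lie g}(\lambda_k)$, the tensor product of the evaluations $\ev_{a_k}\colon L(\lie g)\to\lie g$, $x\otimes t^r\mapsto a_k^rx$; its cyclic vector $v=v_{\lambda_1}\otimes\cdots\otimes v_{\lambda_\ell}$ satisfies all the defining relations of $W(\bpi)$ (the relation $(x_i^-)^{\lambda_\bpi(h_i)+1}v=0$ coming from the multinomial expansion together with $(x_i^-)^{\lambda_k(h_i)+1}v_{\lambda_k}=0$, and $(h_i\otimes t^r)v=(\sum_k\lambda_k(h_i)a_k^r)v$ from the definition of $\ev_{a_k}$), so the universal property of $W(\bpi)$ gives a surjection $W(\bpi)\to\bu(L(\lie g))v$ with $w_\bpi\mapsto v\ne 0$. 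Hence $\dim W(\bpi)_{\lambda_\bpi}=1$. Finally, $\lie h$ acts semisimply on $W(\bpi)$, so every submodule is a sum of weight spaces; a proper submodule cannot contain $w_\bpi$, hence meets $W(\bpi)_{\lambda_\bpi}=\bc w_\bpi$ in $0$, so the sum $N$ of all proper submodules is again proper. Then $N$ is the unique maximal submodule and $V(\bpi):=W(\bpi)/N$ the unique irreducible quotient.

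For (ii), the key observation is that $\tau_b$ maps each of $L(\lie n^\pm)$ onto itself, restricts to the identity on $\lie g=\lie g\otimes 1$, and sends $h_i\otimes t^r\mapsto b^r(h_i\otimes t^r)$, and therefore — after pull-back — rescales the variable $u$ in the series $\bop^\pm_i(u)$, replacing each $\pi^\pm_i(u)$ by $\pi^\pm_i(b^{-1}u)$. Consequently, if $w$ denotes $w_\bpi$ viewed inside $\tau_bW(\bpi)$, it satisfies $L(\lie n^+)w=0$, $hw=\lambda_\bpi(h)w=\lambda_{\bpi_b}(h)w$, $(x_i^-)^{\lambda_\bpi(h_i)+1}w=0$ and $(\bop^\pm_i(u)-\pi^\pm_i(b^{-1}u))w=0$; these are exactly the defining relations of $W(\bpi_b)$, so the universal property yields a surjection $\phi\colon W(\bpi_b)\to\tau_bW(\bpi)$ with $w_{\bpi_b}\mapsto w$. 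Running the same argument with $b^{-1}$ in place of $b$ and $\bpi_b$ in place of $\bpi$ (and using $(\bpi_b)_{b^{-1}}=\bpi$) gives a surjection $\psi\colon W(\bpi)\to\tau_{b^{-1}}W(\bpi_b)$; pulling $\psi$ back through $\tau_b$ and composing with $\phi$ produces a surjective $L(\lie g)$--endomorphism of $W(\bpi_b)$ fixing $w_{\bpi_b}$, which must be the identity since $W(\bpi_b)$ is cyclic on $w_{\bpi_b}$. Hence $\phi$ is an isomorphism. Since $\tau_b$ is the identity on $\lie g$, the $\lie g$--module underlying $\tau_bW(\bpi)$ is just $W(\bpi)$, so $W(\bpi)\cong_{\lie g}W(\bpi_b)$; specializing to $\bpi=\bpi_{\lambda,a}$ (so $\bpi_b=\bpi_{\lambda,ab^{-1}}$) and then replacing $b$ by $b^{-1}$ gives $W(\bpi_{\lambda,a})\cong_{\lie g}W(\bpi_{\lambda,ab})$.

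I expect the only genuinely non-formal point to be the non-vanishing $w_\bpi\ne 0$ in (i): this is the sole place a module must be constructed by hand, and verifying that the cyclic vector of the evaluation module really satisfies the $(x_i^-)^{\lambda_\bpi(h_i)+1}$--relation is the one computation that is not purely symbolic. Everything else — the PBW reduction, the weight estimate, the passage to the unique irreducible quotient, and the identification in (ii) — is formal; in (ii) the one mild subtlety is to promote the two surjections to an honest isomorphism via the "surjective endomorphism fixing the cyclic vector" observation rather than by a dimension count, which is unavailable here since $W(\bpi)$ is not yet known to be finite dimensional.
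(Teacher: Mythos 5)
The paper does not give a proof of this lemma (it simply calls it ``standard''), so there is no argument to compare with; your proof is correct and follows the usual route. The PBW reduction to $\bu(L(\lie n^-))w_\bpi$ and the $(-Q^+)$--grading argument for the weight estimate are exactly right, and you are correct that the only genuinely non-formal step is showing $w_\bpi\neq 0$: constructing the tensor product of pulled-back evaluation modules $\bigotimes_k\ev_{a_k}^*V_{\lie g}(\lambda_k)$ and checking that its cyclic vector satisfies all four defining relations (the $(x_i^-)^{\lambda_\bpi(h_i)+1}$-relation from the multinomial expansion, the $\bop_i^{\pm}$-relation from the evaluation eigenvalues) is exactly the standard device here. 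For (ii), the universal-property surjections in both directions together with the observation that a surjective endomorphism of a cyclic module fixing its generator is the identity is a clean way to get the isomorphism without finite-dimensionality. One small caveat you may want to note: the claim that pulling back through $\tau_b$ replaces $\pi_i(u)$ by $\pi_i(b^{-1}u)$ depends on the sign convention; with the convention $x\cdot v=\tau_b(x)v$ the computation $\exp\bigl(-\sum_k\frac{b^kp_k}{k}u^k\bigr)=\pi_i^+(bu)$ gives $\pi_i(bu)$ rather than $\pi_i(b^{-1}u)$, so either one takes pull-back through $\tau_b^{-1}$ or simply accepts the harmless sign flip, which in any case does not affect the final $\lie g$--isomorphism $W(\bpi_{\lambda,a})\cong_{\lie g}W(\bpi_{\lambda,ab})$ since one may replace $b$ by $b^{-1}$.
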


\subsection{} The modules $W(\bpi)$ were initially defined and studied in \cite{CPweyl} and a formula was conjectured for their dimension.
Parts (i) and (ii) of the next theorem were proved in
\cite{CPweyl}. Part (iii) was proved in \cite{CPweyl} in the case
of $\lie{sl_2}$, for $\lie{sl_n}$ it was proved in \cite{CL} and
for the general simply laced case in \cite{FoL}. Part (iii) can be
deduced for the general case by using results of
\cite{BN},\cite{K},\cite{Nak} for quantum affine algebras.

\begin{thm}\label{affine}
\begin{enumerit}
\item Given $\bpi=(\pi_i)_{i\in I}$ with unique decomposition
$\bpi=\prod_{k=1}^\ell \bpi_{\lambda_\ell,a_\ell}$, we have an
isomorphism of $L(\lie g)$--modules
$$W(\bpi)\cong\otimes_{k=1}^\ell W(\bpi_{\lambda_k,a_k}).$$
\item Let $V$ be any finite--dimensional $L(\lie g)$--module
generated by an element $v\in V$ such that $$L(\lie n^+)v=0,\ \
L(\lie h)v=\bc v.$$ Then there exists $\bpi\in\cal P^+$ such that
the assignment $w_\bpi\to v$ extends to a surjective homomorphism
$W(\bpi)\to V$ of $L(\lie g)$--modules. \item Let $\lambda\in P^+$
and $a\in\bc^\times$. Suppose that $\lambda=\sum_{i\in
I}m_i\omega_i$. Then
$$W(\bpi_{\lambda,a})\cong_{\lie g} \bigotimes_{i\in I}
W(\bpi_{\omega_i,1})^{\otimes m_i}.$$\end{enumerit}\hfill\qedsymbol
\end{thm}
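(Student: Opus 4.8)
The plan is to prove the three assertions in the order (ii), (i), (iii): parts (ii) and (i) are formal consequences of Garland's identity (\lemref{Garland}) and \lemref{chinun}, while the real content is the character formula (iii), where I expect to have to invoke an external input. \emph{Part (ii).} Let $v$ be the generating vector. Since $L(\lie h)v=\bc v$, each $h_i\otimes t^r$ scales $v$ by a scalar $\psi_{i,r}$, and restricting to the subalgebra $\cong L(\lie{sl_2})$ spanned by $\{x^\pm_i\otimes t^k,\,h_i\otimes t^k\}$ shows $m_i:=\psi_{i,0}\in\bz_+$ and $(x^-_i)^{m_i+1}v=0$. Now apply \lemref{Garland} to $v$: since $L(\lie n^+)v=0$ we have $\widetilde{\mathbf x_i}^+(u)v=0$, so for $r\ge m_i$ the left-hand side vanishes and $(\mathbf x^-_i(u)\bop_i^+(u))_{r+1}v=0$. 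As $\bop_i^+(u)v$ is the scalar series $\exp(-\sum_{k\ge1}\tfrac{\psi_{i,k}}{k}u^k)\,v$, these identities — together with the mirror identities from the $\bop_i^-$--analogue of \lemref{Garland} — force $\exp(-\sum_{k\ge1}\tfrac{\psi_{i,k}}{k}u^k)$ to equal a polynomial $\pi_i^+(u)$ of degree $m_i$ with constant term $1$, with the matching $\pi_i^-$ on the other side; this is the rationality argument carried out for $\lie{sl_2}$ in \cite{CPweyl}. Putting $\bpi=(\pi_i)_{i\in I}$, the vector $v$ then satisfies every defining relation of $W(\bpi)$, so $w_\bpi\mapsto v$ extends to the desired surjection.

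\emph{Part (i).} Write $\bpi=\prod_k\bpi_{\lambda_k,a_k}$ with the $a_k$ pairwise distinct. The assignment $w_\bpi\mapsto\bigotimes_k w_{\bpi_{\lambda_k,a_k}}$ is a homomorphism $W(\bpi)\to\bigotimes_k W(\bpi_{\lambda_k,a_k})$: the target vector is killed by $L(\lie n^+)$, has weight $\sum_k\lambda_k=\lambda_\bpi$, is annihilated by $(x^-_i)^{\lambda_\bpi(h_i)+1}$ because a tensor product of $\lie{sl_2}$--highest weight vectors of $h_i$--weights $\lambda_k(h_i)$ is killed by that power of $x^-_i$, and has $\bop_i^\pm(u)$--eigenvalue $\pi_i^\pm(u)$ since $\bop_i^\pm(u)$ is group-like and the polynomials multiply. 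It is surjective because, by \lemref{chinun}, $\bigotimes_k W(\bpi_{\lambda_k,a_k})$ is the external tensor product of cyclic modules over $\bigoplus_k\lie g_{a_k,N}$, hence cyclic on the image of $w_\bpi$. For injectivity I would bound dimensions: $W(\bpi)$ is finite-dimensional \cite{CPweyl}, so its action factors through $\lie g_{\boa,N}\cong\bigoplus_k\lie g_{a_k,N}$ for $N\gg0$; writing $\fa_k$ for the $k$th summand and using $\bu(\bigoplus_k\fa_k)=\bigotimes_k\bu(\fa_k)$, a Chinese remainder computation shows that $h_i\otimes1\in\fa_k$ already acts on $w_\bpi$ by $\lambda_k(h_i)$ and that $w_\bpi$ is an $\fa_k$--highest weight vector realizing $\bpi_{\lambda_k,a_k}$, so $\bu(\fa_k)w_\bpi$ is a quotient of $W(\bpi_{\lambda_k,a_k})$ and $\dim W(\bpi)\le\prod_k\dim W(\bpi_{\lambda_k,a_k})$. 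The one delicate point is that the relation $(x^-_i\otimes1)^{\lambda_k(h_i)+1}w_\bpi=0$ needed to recognize $\bu(\fa_k)w_\bpi$ as such a quotient holds only after projecting $x^-_i\otimes1$ into $\fa_k$; checking it uses the distinctness of the $a_k$ together with the $\lie{sl_2}$ theory of \cite{CPweyl}.

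\emph{Part (iii).} By the $\lie g$--isomorphism $W(\bpi_{\lambda,a})\cong_{\lie g}W(\bpi_{\lambda,ab})$ we may assume $a=1$, and then $\bpi_{\lambda,1}=\prod_i\bpi_{\omega_i,1}^{m_i}$. The lower bound $\dim W(\bpi_{\lambda,1})\ge\prod_i\dim W(\bpi_{\omega_i,1})^{m_i}$ (in fact the coefficientwise inequality of $\lie g$--characters) is a degeneration argument: for pairwise distinct parameters $b_{i,j}$, Part~(i) together with the translation isomorphism gives $\dim W(\prod_{i,j}\bpi_{\omega_i,b_{i,j}})=\prod_i\dim W(\bpi_{\omega_i,1})^{m_i}$, and since $W(\cdot)$ is cut out of a free module by relations depending algebraically on the parameters, each weight-space dimension is upper semicontinuous and can only grow as the $b_{i,j}$ collide to $1$; equivalently, the fusion product of the $W(\bpi_{\omega_i,1})$, each taken with multiplicity $m_i$, in the sense of \cite{FL} — which as a $\lie g$--module is $\bigotimes_i W(\bpi_{\omega_i,1})^{\otimes m_i}$ — is a cyclic quotient of $W(\bpi_{\lambda,1})$. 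The main obstacle is the matching upper bound $\dim W(\bpi_{\lambda,1})\le\prod_i\dim W(\bpi_{\omega_i,1})^{m_i}$, i.e.\ that the $\lie g$--character of $W(\bpi_{\lambda,a})$ does not jump when the evaluation parameters are specialized to a single point. This is exactly the place where extra machinery is unavoidable: for $\lie g=\lie{sl_n}$ one produces an explicit monomial spanning set of the right size as in \cite{CL}; in the general simply-laced case one identifies $W(\bpi_{\lambda,1})$ with a Demazure module inside a level-one integrable module and reads off its dimension \cite{FoL}; and for arbitrary $\lie g$ one realizes $W(\bpi_{\lambda,a})$ as the $q\to1$ limit of a finite-dimensional Weyl module for the quantum affine algebra, whose character is known from \cite{BN},\cite{K},\cite{Nak}. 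Once the two bounds meet, the cyclic quotient of the lower-bound step is an isomorphism, and (iii) follows.
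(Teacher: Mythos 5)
\thmref{affine} carries no proof in this paper: it is stated with a terminal $\qedsymbol$, and the paragraph preceding it assigns parts (i)--(ii) to \cite{CPweyl} and part (iii) to \cite{CPweyl} (for $\lie{sl}_2$), \cite{CL} (for $\lie{sl}_n$), \cite{FoL} (simply laced), and \cite{BN}, \cite{K}, \cite{Nak} in general. So there is no in-paper argument to compare against; what you have produced is a reconstruction of the cited proofs, and in outline it is faithful to them. Part (ii) via Garland's identity and $\lie{sl}_2$ polynomiality, part (i) via the group-like property of $\bop_i^\pm(u)$ together with the Chinese-remainder decomposition $\lie g_{\boa,N}\cong\bigoplus_k\lie g_{a_k,N}$ and a dimension count, and part (iii) via a fusion/degeneration lower bound with the matching upper bound taken from the literature: this is indeed how the sources proceed, and you correctly isolate part (iii) as the one step where nontrivial external machinery is unavoidable, which is precisely why the paper cites rather than proves it.

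Two places where your sketch is looser than the sources, flagged so you do not mistake them for trivialities. In part (i), going from ``$\bu(\fa_k)w_\bpi$ is a quotient of $W(\bpi_{\lambda_k,a_k})$'' to $\dim W(\bpi)\le\prod_k\dim W(\bpi_{\lambda_k,a_k})$ uses that $W(\bpi)$ is cyclic over $\bu(\oplus_k\fa_k)\cong\bigotimes_k\bu(\fa_k)$ with the $\fa_k$ commuting ideals, so that $W(\bpi)$ is a quotient of $\bigotimes_k(\bu(\fa_k)w_\bpi)$; this needs to be said. In part (ii), deducing that $\bop_i^{+}(u)v$ is a polynomial of degree exactly $m_i$ and that $\bop_i^-(u)v$ has the reciprocal roots requires, beyond the $(r{+}1)$-identity of \lemref{Garland}, the companion identity with $(x_i^-\otimes1)^{(r)}$ and the relation between the $\bop^+$ and $\bop^-$ coefficients of a highest-weight vector (the untwisted analogue of the final Lemma of this paper, \cite[Prop.~1.1]{CPweyl}); you gesture at the ``mirror identities'' and the ``rationality argument'' and defer to \cite{CPweyl}, which is acceptable and consistent with the paper's own treatment.
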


\subsection{Annihilating ideals for $W(\bpi)$.}
The  next proposition  is implicit in \cite{CPweyl}  but since it
plays a big role in this paper we make it explicit and give a
proof.
\begin{prop}\label{annuntwist}
 Let $\bpi = \prod_{r=1}^\ell \bpi_{\lambda_r,a_r}\in \cal P^+$. There exists
an integer $N=N(\bpi)$  such that  $$\left(\lie g\otimes
\prod_{r=1}^\ell(t-a_r)^N\bc[t,t^{-1}] \right) W(\bpi)=0.$$

\end{prop}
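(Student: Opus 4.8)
The plan is to reduce to the rank-one case using \lemref{Garland} and the defining relations of $W(\bpi)$. First I would observe that by \thmref{affine}(i) it suffices to treat $\bpi = \bpi_{\lambda,a}$ for a single $\lambda\in P^+$ and $a\in\bc^\times$, since a tensor product of modules each killed by $\lie g\otimes(t-a_r)^{N_r}\bc[t,t^{-1}]$ is killed by $\lie g\otimes\prod_r(t-a_r)^{N}\bc[t,t^{-1}]$ for $N=\max_r N_r$. By \lem (the module $W(\bpi)$ is generated over $\bu(L(\lie n^-))$ by $w_\bpi$), it then suffices to find $N$ so that $\bigl(\lie n^-\otimes(t-a)^N\bc[t,t^{-1}]\bigr)w_\bpi=0$, because once a graded (or rather, $L(\lie h)$-weight-filtered) ideal of $L(\lie g)$ annihilates the cyclic generator and $W(\bpi)$ is generated by that vector, one checks that the annihilator is an ideal; more precisely I would show directly that the subspace of vectors killed by $\lie g\otimes(t-a)^N\bc[t,t^{-1}]$ (for suitable $N$) is an $L(\lie g)$-submodule containing $w_\bpi$.

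The core computation is for $\lie{sl_2}$. Fix $i\in I$ and work inside the copy of $L(\lie{sl_2})$ spanned by $x_i^\pm\otimes t^k$, $h_i\otimes t^k$. Set $m=\lambda(h_i)$. The relation $(x_i^-)^{m+1}w_\bpi=0$ together with $L(\lie n^+)w_\bpi=0$ and the relation $\bigl(\bop_i^\pm(u)-\pi_i^\pm(u)\bigr)w_\bpi=0$ is exactly the setup of \lemref{Garland}: applying that lemma with $r=m$, and noting that $\widetilde{\mathbf x_i}^+(u)w_\bpi\in\bu(L(\lie g))(x_i^+\otimes t^k)w_\bpi=0$ by \lem on highest-weight-ness (here I would use that $L(\lie n^+)w_\bpi = 0$ kills all positive modes; the negative modes $x_i^+\otimes t^{-k}$ also act as $0$ on $w_\bpi$ since $w_\bpi$ generates a highest weight module — this needs a short argument that $(x_i^+\otimes t^{-k})w_\bpi=0$, which follows from weight considerations as $\lambda_\bpi+\alpha_i-\text{anything}$ forces it), one gets
$$
(x_i^+\otimes t)^{(m)}(x_i^-\otimes 1)^{(m+1)}w_\bpi = (-1)^m\bigl(\mathbf x_i^-(u)\bop_i^+(u)\bigr)_{m+1}w_\bpi.
$$
But the left side is $0$ since $(x_i^-\otimes 1)^{(m+1)}w_\bpi$ is a scalar multiple of $(x_i^-)^{m+1}w_\bpi=0$. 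Since $\bop_i^+(u)w_\bpi=\pi_i^+(u)w_\bpi$ is a polynomial of degree $\deg\pi_i = m$, the coefficient of $u^{m+1}$ in $\mathbf x_i^-(u)\pi_i^+(u)$ gives a relation expressing $(x_i^-\otimes t^{m+1})w_\bpi$ as a linear combination of $(x_i^-\otimes t^k)w_\bpi$ for $0\le k\le m$. Iterating, $(x_i^-\otimes t^k)w_\bpi$ for all $k\ge 0$ lies in the span of $\{(x_i^-\otimes t^j)w_\bpi : 0\le j\le m\}$; a symmetric argument with $\bop_i^-$ handles negative powers of $t$, so in fact the whole space $\bigl(\lie{sl_2}\cap L(\lie n^-)\bigr)\cdot\bc[t,t^{-1}]\,w_\bpi$ is finite-dimensional.

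From here I would conclude as follows. The polynomial $\pi_i^+(u)$ divides (a power of) $(1-au)$; indeed $\pi_i^+ = (1-au)^m$ exactly when $\bpi=\bpi_{\lambda,a}$. The recursion above, read as a linear recurrence for the vectors $(x_i^-\otimes t^k)w_\bpi$ with characteristic polynomial $(1-au)^m$ reversed, shows that $(x_i^-\otimes (t-a)^m\cdot t^k)w_\bpi = 0$ for all $k$, i.e. $\bigl(x_i^-\otimes (t-a)^m\bc[t,t^{-1}]\bigr)w_\bpi=0$; similarly for $h_i$ using $\bop_i^\pm$ directly, and for $x_i^+$ it is immediate. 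Taking $N_0$ to be the maximum of $\lambda(h_i)$ over $i\in I$ (equivalently using $N_0 = \lambda(\theta_{\lie g}^\vee)$ to be safe, via the $\lie{sl_2}$ attached to $\theta_{\lie g}$, or just $N_0=\max_i\lambda(h_i)$ and then enlarging to absorb the finitely many simple-root cases), the simple root vectors $x_{\alpha_i}^\pm\otimes(t-a)^{N_0}\bc[t,t^{-1}]$ all annihilate $w_\bpi$. Finally, since these simple-root loop generators generate $L(\lie g)$ as a Lie algebra and $\lie g\otimes(t-a)^{N}\bc[t,t^{-1}]$ is an ideal, a bracket-counting argument (each bracket of two elements from ideals $\lie g\otimes(t-a)^{N_0}$ lands in $\lie g\otimes(t-a)^{2N_0}$) shows that for $N$ sufficiently large — $N = N_0\cdot(\text{ht}\,\theta_{\lie g}+1)$ suffices — the ideal $\lie g\otimes(t-a)^N\bc[t,t^{-1}]$ annihilates $w_\bpi$, hence all of $W(\bpi)=\bu(L(\lie n^-))w_\bpi$, since that ideal is preserved under bracketing by $L(\lie g)$.

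The main obstacle is the last step: passing from "the simple-root loop subalgebras each kill $w_\bpi$ modulo a high power of $(t-a)$" to "the full ideal $\lie g\otimes(t-a)^N\bc[t,t^{-1}]$ kills $w_\bpi$". One must be careful that brackets like $[x_{\alpha_i}^-\otimes(t-a)^{N_0}f,\, x_{\alpha_j}^-\otimes g]$ — where the second factor does not obviously annihilate $w_\bpi$ — still produce vanishing when applied to $w_\bpi$; the clean way is to argue that the set $J=\{x\in L(\lie g): xW(\bpi)=0\}$ is an ideal of $L(\lie g)$ (it is, since $W(\bpi)$ is a module), show $x_{\alpha_i}^\pm\otimes(t-a)^{N_0}\bc[t,t^{-1}]\subset J$ and $h_i\otimes(t-a)^{N_0}\bc[t,t^{-1}]\subset J$, and then note that the Lie ideal generated by these elements inside $L(\lie g)$ contains $\lie g\otimes(t-a)^N\bc[t,t^{-1}]$ for $N$ large because $\lie g = [\lie g,[\lie g,\cdots]]$ is generated in $\le\text{ht}\,\theta_{\lie g}$ steps from the Chevalley generators. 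This is where the integer $N=N(\bpi)$ is finally pinned down.
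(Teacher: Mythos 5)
Your core $\lie{sl_2}$ computation via \lemref{Garland} matches the paper's, and your initial reduction to a single point $\bpi = \bpi_{\lambda,a}$ via \thmref{affine}(i) is a legitimate departure (the paper works with the full product $\prod_r(t-a_r)^N$ throughout). The serious problem is in your final step, which you yourself flag as the ``main obstacle'' and then do not actually close.

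Your proposed ``clean way'' is to show that $x_{\alpha_i}^\pm\otimes(t-a)^{N_0}\bc[t,t^{-1}]$ and $h_i\otimes(t-a)^{N_0}\bc[t,t^{-1}]$ lie in $J=\{x\in L(\lie g): xW(\bpi)=0\}$, and then use that $J$ is an ideal. But everything you have actually proved is that these elements annihilate the single vector $w_\bpi$, not all of $W(\bpi)$; and the annihilator of $w_\bpi$ is merely a subalgebra (the stabilizer), not an ideal, so you cannot transfer the statement for $w_\bpi$ into membership in $J$. Concretely, $x_i^-\otimes(t-a)^{N_0}f$ annihilating $w_\bpi$ gives no control on $x_i^-\otimes(t-a)^{N_0}f\cdot(x_j^-\otimes g)w_\bpi$ because $x_i^-$ does not commute with $\lie n^-$. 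Note also that if the membership $x_i^-\otimes(t-a)^{N_0}\bc[t,t^{-1}]\subset J$ were available, your ``bracket-counting'' growth of $N$ would be unnecessary: the Lie ideal of $L(\lie g)$ generated by $x_i^-\otimes(t-a)^{N_0}\bc[t,t^{-1}]$ is already all of $\lie g\otimes(t-a)^{N_0}\bc[t,t^{-1}]$, since bracketing with $\lie g\otimes 1$ does not change the polynomial factor and $x_i^-$ generates $\lie g$ as a $\lie g$-ideal. The fact that your $N$ grows with height suggests you are mixing two incompatible arguments.

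The paper closes this gap with a specific trick you do not use: it first passes (by iterated brackets of the $x_i^-$-relations, all applied to $w_\bpi$) to the highest root vector, obtaining $\bigl(x^-_\theta\otimes\prod_r(t-a_r)^N\bigr)w_\bpi=0$ for some $N\gg0$. Because $[\lie n^-,x^-_\theta]=0$ and $W(\bpi)=\bu(L(\lie n^-))w_\bpi$, this single element then annihilates \emph{all} of $W(\bpi)$, i.e.\ it genuinely lies in $J$. Since $J$ is an ideal and $\lie g$ is spanned by iterated brackets $[x^+_{i_1},[\cdots,[x^+_{i_k},x^-_\theta]]]$, one concludes $\lie g\otimes\prod_r(t-a_r)^N\bc[t,t^{-1}]\subset J$. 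Your strategy can be salvaged differently, without invoking $x^-_\theta$: use the iterated brackets of the identities $x_i^-\otimes\prod_r(t-a_r)^{\lambda_r(h_i)}\bc[t,t^{-1}]\cdot w_\bpi=0$ among themselves (both bracket arguments annihilate $w_\bpi$, so the bracket does too) to show that $x^-_\alpha\otimes\prod_r(t-a_r)^{N_\alpha}\bc[t,t^{-1}]$ annihilates $w_\bpi$ for every $\alpha\in R^+$; combining with the trivial statements for $\lie n^+$ and $\lie h$ gives that the ideal $\lie g\otimes\prod_r(t-a_r)^{N}\bc[t,t^{-1}]$ annihilates $w_\bpi$ for $N\gg0$, and an ideal annihilating the cyclic generator annihilates the whole cyclic module. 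This is the argument your ``$N$ grows with $\operatorname{ht}\theta$'' heuristic is implicitly pointing at, but it must be phrased as an ideal killing $w_\bpi$, not as simple-root loops killing $W(\bpi)$.
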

\begin{pf} We begin by proving that for all $i\in I$
\begin{equation} \label{simplei}
x_i^-\otimes\prod_{r=1}^\ell(t-a_r)^{\lambda_r(h_i)}w_{\bpi}=0.\end{equation} Set $N_i=\lambda_\bpi(h_i)$. Using  the defining relations of
$W(\bpi)$ and Lemma \ref{Garland},
$$0=(x^+_i\otimes
t)^{N_i}(x_i^-\otimes 1)^{N_i+1}w_{\bpi} =(-1)^{N_i}\left( \mathbf x_i^-(u) \bop_i^+(u) \right)_{N_i}w_{\bpi}.$$  We also have
\[  \bop_i(u).w_\bpi =\prod_{r=1}^\ell (1-a_ru)^{\lambda_r(h_i)}.w_{\bpi}\equiv \left(\sum_{j=0}^{N_i}{p_{i,j}}u^j\right).w_{\bpi}.\]
Combining these we get$$ \left( \mathbf x_i^-(u) \bop_i^+(u)
\right)_{N_i}w_{\bpi}=\left(\sum_{j=0}^{N_i}x_i^-\otimes
p_{i,N_i-j}t^j\right)w_{\bpi}  =x_i^-\otimes
\left(\sum_{j=0}^{N_i} t^j p_{i,N_i-j} \right)w_{\bpi}=0.$$ But it
is elementary to see that $$ \sum_{j=0}^{N_i} t^j
p_{i,N_i-j}=\prod_{r=1}^\ell(t-a_r)^{\lambda_r(h_i)},$$ which
proves \eqref{simplei}. Since $\lie n^-$ is  generated by the
elements $x_i^-$, $i\in I$, it is immediate that there exists
$N\gg0$ such that \begin{equation}
\label{theta}\left(x^-_\theta\otimes
\prod_{r=1}^\ell(t-a_r)^N\right)w_{\bpi}=0.\end{equation} Since
$[\lie n^-, x^-_\theta]=0$ and $W(\bpi)\cong\bu(L(\lie
n^-))w_\bpi$ as vector spaces, we get
$$\left(x^-_\theta\otimes \prod_{r=1}^\ell(t-a_r)^N\right)W({\bpi})=0.$$ Since any
element in $\lie g$ is in the span of elements of the form $\{[x^+_{i_1}[x^+_{i_2}[\cdots[x^+_{i_k}, x^-_{\theta}],\cdots]]: i_1,\cdots ,i_k\in
I\}$, we now get $$\left(\lie g\otimes \prod_{r=1}^\ell(t-a_r)^N\bc[t,t^-1] \right) W(\bpi)=0.$$

\end{pf}
\begin{cor} Given $\bpi\in\cal P^+$ with unique decomposition $\bpi=\prod_{r=1}^\ell\bpi_{\lambda_r,a_r}\in\cal P^+$, there exists $N\in\bz_+$ such that the action of $L(\lie g)$ on
$W(\bpi)$ factors through to an action of $\lie g_{\boa, N}$ on
$W(\bpi)$ and $W(\bpi)=\bu(L(\lie n^-_{\boa,N}))w_\bpi.$\end{cor}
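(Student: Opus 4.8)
The plan is to deduce the corollary directly from Proposition~\ref{annuntwist} together with Lemma~\ref{chinun}, so the real content is already in hand. First I would observe that Proposition~\ref{annuntwist} gives an integer $N$ with $\left(\lie g\otimes\prod_{r=1}^\ell(t-a_r)^N\bc[t,t^{-1}]\right)W(\bpi)=0$. By definition $\lie g_{\boa,N}$ is precisely the quotient of $L(\lie g)$ by this ideal, so the $L(\lie g)$--action on $W(\bpi)$ annihilates the kernel of the projection $L(\lie g)\to\lie g_{\boa,N}$ and hence factors through $\lie g_{\boa,N}$. Since $W(\bpi)$ is generated as an $L(\lie g)$--module by $w_\bpi$, it is generated as a $\lie g_{\boa,N}$--module by $w_\bpi$.

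Next I would upgrade the weaker statement $W(\bpi)=\bu(L(\lie g))w_\bpi$ to the assertion $W(\bpi)=\bu(L(\lie n^-_{\boa,N}))w_\bpi$. From Lemma~\ref{chinun} (whose hypothesis is met because the $a_r$ are distinct by the uniqueness of the decomposition) we have $\lie g_{\boa,N}\cong\bigoplus_{r=1}^\ell\lie g_{a_r,N}$; this is a triangular decomposition-compatible isomorphism, i.e.\ it carries $\lie n^\pm\otimes\bc[t,t^{-1}]$ and $\lie h\otimes\bc[t,t^{-1}]$ onto the corresponding direct sums, so $\lie g_{\boa,N}=\lie n^-_{\boa,N}\oplus\lie h_{\boa,N}\oplus\lie n^+_{\boa,N}$ is a triangular decomposition. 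By Lemma (parts (i)) we already know $L(\lie n^+)w_\bpi=0$, and $L(\lie h)w_\bpi=\bc w_\bpi$; passing these relations to the quotient $\lie g_{\boa,N}$ we get $\lie n^+_{\boa,N}w_\bpi=0$ and $\lie h_{\boa,N}w_\bpi\subseteq\bc w_\bpi$. Now the PBW theorem for $\bu(\lie g_{\boa,N})$ relative to this triangular decomposition, applied to $W(\bpi)=\bu(\lie g_{\boa,N})w_\bpi$, forces $W(\bpi)=\bu(\lie n^-_{\boa,N})w_\bpi$, exactly as in part (i) of the Lemma but one level down.

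I do not expect a serious obstacle here; the only point requiring a word of care is verifying that the isomorphism of Lemma~\ref{chinun} respects the triangular decomposition, but this is immediate because that isomorphism is induced by the ring isomorphism $\bc[t,t^{-1}]/\prod_r(t-a_r)^N\cong\bigoplus_r\bc[t,t^{-1}]/(t-a_r)^N$ tensored with $\lie g$, hence acts as the identity on the $\lie g$ factor and in particular preserves $\lie n^\pm$ and $\lie h$. One should also note that the corollary only claims the existence of \emph{some} $N$, so there is no need to track the optimal value coming from the proof of Proposition~\ref{annuntwist}. The whole argument is therefore a short formal consequence of the preceding proposition, the PBW theorem, and Lemma~\ref{chinun}.
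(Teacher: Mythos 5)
Your proof is correct, but it is more elaborate than it needs to be, and it overlooks the fact that the hard work is already done in the unnumbered Lemma preceding Theorem~\ref{affine}: part~(i) of that Lemma gives $W(\bpi)=\bu(L(\lie n^-))w_\bpi$ directly (not merely $W(\bpi)=\bu(L(\lie g))w_\bpi$). Once Proposition~\ref{annuntwist} shows the $L(\lie g)$--action factors through $\lie g_{\boa,N}$, the map $L(\lie n^-)\to\lie n^-_{\boa,N}$ is surjective and one simply reads off $W(\bpi)=\bu(L(\lie n^-))w_\bpi=\bu(\lie n^-_{\boa,N})w_\bpi$ by passing to the quotient --- no PBW argument on $\lie g_{\boa,N}$ is required. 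Your appeal to Lemma~\ref{chinun} is also a red herring: the triangular decomposition $\lie g_{\boa,N}=\lie n^-_{\boa,N}\oplus\lie h_{\boa,N}\oplus\lie n^+_{\boa,N}$ comes for free from tensoring $\lie g=\lie n^-\oplus\lie h\oplus\lie n^+$ with $\bc[t,t^{-1}]/\prod_r(t-a_r)^N$, and does not depend on the $a_r$ being distinct; Lemma~\ref{chinun} gives the finer decomposition into a direct sum over the points $a_r$, which is not needed here. So your PBW detour is valid but rederives something already in hand, and the citation of Lemma~\ref{chinun} adds a hypothesis (distinctness) that, while satisfied, plays no logical role at this step.
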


\section{The twisted algebras $L^\sigma(\lie g)$ and the modules $W(\bpi^\sigma)$}

\subsection{} \label{extlambda} Assume from now on that $\lie g$ is simply--laced and that
 $\sigma:\lie g\to \lie g$ is a non--trivial diagram automorphism of $\lie g$ of order $m$.
  In particular $\sigma$ induces a permutation of $I$ and $R^+$
 and we have $$\sigma(\lie g_\alpha)=\lie g_{\sigma(\alpha)},\ \ \sigma(\lie h)= \lie h,\
\sigma(\lie n^\pm)=\lie n^\pm. $$ Let $\zeta$ be a primitive
$m^{th}$ root of unity, we have  $$\lie
g=\bigoplus_{\epsilon=0}^{m-1}\lie g_\epsilon,\ \ \lie
g_\epsilon=\{x\in\lie g: \sigma(x)=\zeta^\epsilon x\}.$$ Given any
subalgebra $\lie a$ of $\lie g$ which is preserved by $\sigma$,
set $\lie a_\epsilon=\lie g_\epsilon\cap\lie a$. It is known that
$\lie g_0$ is a simple Lie algebra, $\lie h_0$ is a Cartan
subalgebra and  that  $\lie g_\epsilon$ is an irreducible
representation of $\lie g_0$ for all $0\ leq \epsilon\le m-1$. Moreover,
$$\lie n^\pm\cap \lie g_0=\lie n_0^\pm =\bigoplus_{\alpha\in R^+_{\lie g_0}}(\lie g_0)_ {_{_{\pm \alpha}}}. $$
The following table describes the various possibilities for $\lie
g$, $\lie g_0$ and the structure of $\lie g_k$ as a $\lie
g_0$--module, here $\theta_0^s$ is the highest short root of $\lie
g_0$ and $B_1=A_1$.

\[
\begin{array}{c|c|c|c|}
m & \fg & \fg_0 & {\lie g_k} \\ \hline

2 & A_{2n},  & B_n & V_{\lie g_0}( 2 \theta^s_0)  \\
2 & A_{2n-1}, \ n \geq 2 & C_n & V_{\lie g_0}(\theta_0^s)\\
2 & D_{n+1}, \ n \geq 3 & B_n & V_{\lie g_0}(\theta_0^s) \\
2 & E_6 &  F_4 &  V_{\lie g_0}(\theta_0^s) \\
3&D_4&G_2& V_{\lie g_0}(\theta_0^s)\\
\end{array}
\]
{\em From now we set $R_{\lie g}=R^+$, $R_{\lie g_0}=R_0$, the
sets $I$, $P^+$ etc. are defined similarly. The set of $\sigma$--
orbits of $I$ has  the same cardinality as $I_0$ and we identify
$I_0$ with a subset of $I$. In the case when $\lie g$ is of type
$A_{2n}$ we assume that $n\in I_0$ corresponds to the unique short
simple root of $\lie g_0$. We shall also fix $\zeta$ a primitive
$m^{th}$ root of unity. }

Suppose that  $\{y_i: i\in I\}$ is one of the sets $\{h_i: i\in
I\}$, $\{x_i^+:i\in I\}$ or $\{x_i^-:i\in I\}$ and assume that $m=2$
and that $i\ne n$ if $\lie g$ is of type $A_{2n}$. Define subsets
$\{y_{i,\epsilon}: i\in I_0, \ 0\le \epsilon\le 1\}$ of $\lie
g_\epsilon$ by
\begin{gather*} y_{i,0} =  y_i  \quad \mbox{ if } i =
\sigma(i),\qquad y_{i,0}= y_i+ y_{\sigma(i)} \quad \mbox{ if } i
\neq \sigma(i),
\\
y_{i,1}=  y_i - y_{\sigma(i)}  \quad \mbox{ if } i \neq
\sigma(i)\qquad y_{i,1}=  0 \quad \mbox{ if } i =\sigma(i),
\end{gather*}  If $\lie g$ is of type $A_{2n}$, then we set,
\begin{gather*}
 h_{n,0}= 2(h_n + h_{n+1}),\ \ x^\pm_{n,0}=\sqrt{2}(x^\pm_{n}+x^\pm_{n+1}),\\
x^\pm_{n,1}= -\sqrt{2}( x^\pm_n - x^\pm_{n+1}),\ \ h_{n,1}=h_n-h_{n+1}, \\
y^\pm_{n,1} = \mp \frac14 \left[ x^\pm_{n,0}, x^\pm_{n,1} \right]
.\end{gather*} Finally if $\frak g$ is of type $ D_4$ and $m=3$,
set,
\begin{gather*} y_{i,0} =  y_i  \quad \mbox{ if } i =
\sigma(i),\qquad y_{i,0}=\sum\limits_{j=0}^{m-1} y_{\sigma^{j}(i)}
\quad \mbox{ if } i \neq \sigma(i),
\\
y_{i,1}=y_{i,2}=  0 \quad \mbox{ if } i =\sigma(i),\\
y_{i,1}=  y_i + \zeta^2 y_{\sigma(i)} + \zeta y_{\sigma^2(i)},
\qquad y_{i,2}=  y_i + \zeta y_{\sigma(i)} + \zeta^2 y_{\sigma^2(i)}
\quad \mbox{ if } i \neq \sigma(i),\qquad  \end{gather*}

{\em In the rest of this  paper in the case when $\lie g$ is of type
$A_{2n}$, we shall only be interested in elements $\lambda\in P_0^+$
such that $\lambda(h_{n,0})\in 2\bz_+$ and we let $P_\sigma^+$
denote this subset of $P_{0}^+$. Moreover we regard $\lambda\in
P_\sigma^+$ as an element of $P^+$ as follows:
$$\lambda(h_i)=\begin{cases}
\lambda(h_{i,0}), \ \ i\in I_0,\  \mbox{ if $\lie g$\ is not of
type } A_{2n}\\
0\ \ i\notin I_0,\\
(1-\delta_{i,n}/2)\lambda(h_{i,0}), \mbox{ if $\lie g$\ is of type }
A_{2n}.
\end{cases}
$$}

\subsection{}
Let $\widetilde{\sigma}:\loopg\to \loopg$  be the automorphism
defined by extending,
\[ \widetilde{\sigma} (x\otimes t^k)= \zeta^k \sigma(x) \otimes t^k,
\]
for $x \in \frak g$, $k \in \mathbb{Z}$. Then $\tilde \sigma$ is
or order $m$ and we let $L^\sigma(\lie g)$ be the subalgebra of
fixed points of $\tilde\sigma$. Clearly,
\[L^\sigma(\lie g)\cong\bigoplus_{\ep=0}^{m-1} \lie g_\ep\otimes t^{m-\ep}\bc[t^{m}, t^{-m}].\]

\begin{lem}\label{subalg}
Let $i\in I_0$ and assume that $i\ne n$ if $\lie g$ is of type
$A_{2n}$.  The subalgebra of $L^\sigma(\lie g))$ spanned by the
elements $\{x^\pm_{i,\epsilon}\otimes t^{mk-\epsilon},
h_{i,\epsilon}\otimes t^{mk-\epsilon}: k\in\bz,  \ 0 \leq \ep \leq m-1 \}$  is canonically
isomorphic to $L(\lie{sl_2})$. If $\lie g$ is of type $A_{2n}$ the
subalgebra of $L^\sigma(\lie g))$ spanned by the elements
$\{x_{n,\epsilon}^\pm\otimes t^{2k+\epsilon},
h_{n,\epsilon}\otimes t^{2k+\epsilon}, \mp\frac
14[x^\pm_{n,0},x^\pm_{n,1}]\otimes t^{2k+1}:k\in\bz, \ 0 \leq \ep \leq m-1 \}$ is
canonically  isomorphic to $L^\sigma(\lie
{sl_3})$.\hfill\qedsymbol

\end{lem}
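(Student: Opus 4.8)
The plan is to verify the claimed isomorphisms by exhibiting explicit Chevalley-type generators on the subalgebras and checking the defining bracket relations, then invoking the structure of $L(\lie{sl_2})$ (resp. $L^\sigma(\lie{sl_3})$) as the Lie algebra generated by such elements. First I would treat the case $i \ne n$ (or $\lie g$ not of type $A_{2n}$). Recall that $\{x_i^\pm \otimes t^k, h_i \otimes t^k : k \in \bz\}$ spans a copy of $L(\lie{sl_2})$ inside $L(\lie g)$ when $i$ is a fixed point of $\sigma$, and more generally when $i \ne \sigma(i)$ the elements $x_i^\pm + x_{\sigma(i)}^\pm$, $h_i + h_{\sigma(i)}$ together with the $t^k$-twisted companions span a copy of $L(\lie{sl_2})$ as well (since $\lie g_{\alpha_i} \oplus \lie g_{\alpha_{\sigma(i)}}$ with the corresponding coroots is an $\lie{sl_2} \oplus \lie{sl_2}$, and the diagonal $\lie{sl_2}$ is what survives, the off-diagonal giving the $\ep = 1$ part). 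One checks directly from the commutator $[x \otimes t^r, y \otimes t^s] = [x,y] \otimes t^{r+s}$ and the relations $[h_{i,\ep}, x_{i,\ep'}^\pm] = \pm 2 x_{i,\ep+\ep'}^\pm$, $[x_{i,\ep}^+, x_{i,\ep'}^-] = h_{i,\ep+\ep'}$ (indices mod $m$) that the span of $\{x_{i,\ep}^\pm \otimes t^{mk-\ep}, h_{i,\ep}^\pm \otimes t^{mk-\ep}\}$ is closed under brackets and that the $\bz$-grading by the exponent of $t$ collapses, after the substitution $s = t^m$ absorbed into the $\ep$-shift, to the standard $\bz$-grading of $L(\lie{sl_2}) = \lie{sl_2} \otimes \bc[s,s^{-1}]$. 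So the key point is just that the map $x_{i,\ep}^\pm \otimes t^{mk-\ep} \mapsto x^\pm \otimes s^{k} \cdot (\text{shift})$ (with the appropriate identification of exponents) respects brackets, which is a finite verification using the $\lie{sl_2}$-relations for the $y_{i,\ep}$.

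For the $A_{2n}$ case with $i = n$ the situation is genuinely different because $\lie g_{\alpha_n} \oplus \lie g_{\alpha_{n+1}} \oplus \lie g_{\alpha_n + \alpha_{n+1}}$ together with the coroots is a copy of $\lie{sl_3}$, not $\lie{sl_2} \oplus \lie{sl_2}$, and the order-2 automorphism $\sigma$ acts on it as the order-2 diagram automorphism $\tildsi$ of $\lie{sl_3}$; correspondingly the twist $\tildsi$ on $L(\lie{sl_3})$ has fixed-point algebra $L^\sigma(\lie{sl_3})$ of type $A_2^{(2)}$. So here I would match the generators $x_{n,0}^\pm \otimes t^{2k}$, $x_{n,1}^\pm \otimes t^{2k+1}$, $h_{n,0} \otimes t^{2k}$, $h_{n,1} \otimes t^{2k+1}$, together with the degree-shifted bracket elements $\mp\tfrac14[x_{n,0}^\pm, x_{n,1}^\pm] \otimes t^{2k+1}$ (which lie in the $\ep = 1$ graded piece, corresponding to the long root vectors of $\lie{sl_3}$ fixed up to sign by $\tildsi$), against the standard generators of $L^\sigma(\lie{sl_3})$ as described via the same table applied to $\lie{sl_3}$. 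The normalizations $\sqrt 2$ and $-\sqrt 2$ and the factor $\tfrac14$ are precisely what makes the brackets come out with integer structure constants matching the abstract $L^\sigma(\lie{sl_3})$; I would verify the finitely many defining relations (the $\ep=0$ part being an $\lie{sl_2}$-loop, the $\ep = 0$ part acting on the $\ep=1$ part as the appropriate $\lie g_0$-module, and the bracket of two $\ep=1$ elements landing back in the $\ep=0$ part).

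The main obstacle is the $A_{2n}$, $i=n$ case: one must check that the listed elements actually span a subalgebra (closure under bracket of the $\ep=1$ elements, which produces $[x_{n,0}^\pm, x_{n,1}^\pm]$-type terms that must be re-expressed in terms of the given generators) and that the resulting algebra is isomorphic to $L^\sigma(\lie{sl_3})$ and not some other real form or a twist with a different grading. This amounts to carefully tracking the $t$-exponents modulo $2$ against the $\zeta$-eigenspace decomposition of $\tildsi$ on $\lie{sl_3}$, and confirming the scalars; it is a direct but slightly delicate computation, and it is essentially the reason $A_{2n}^{(2)}$ must be handled separately throughout the paper. Everything else — the $i \ne n$ cases — reduces to the already-recalled fact that $\{x_i^\pm \otimes t^k, h_i \otimes t^k\}$ generates a loop $\lie{sl_2}$, pushed through the fixed-point construction, and is routine.
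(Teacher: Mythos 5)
The paper gives no proof of this lemma (the $\qedsymbol$ follows the statement immediately), so your relation-by-relation verification is exactly the check the authors leave to the reader, and it is correct: the brackets $[h_{i,\ep},x^\pm_{i,\ep'}]=\pm 2x^\pm_{i,\ep+\ep'}$ and $[x^+_{i,\ep},x^-_{i,\ep'}]=h_{i,\ep+\ep'}$ (indices mod $m$, using that $\alpha_i$ and $\alpha_{\sigma(i)}$ are orthogonal when $i\neq n$) combine with the commutator $[x\otimes t^r,y\otimes t^s]=[x,y]\otimes t^{r+s}$ to show that $x^\pm_{i,\ep}\otimes t^j\mapsto x^\pm\otimes s^j$, $h_{i,\ep}\otimes t^j\mapsto h\otimes s^j$ (for $j\equiv -\ep\bmod m$) is an isomorphism onto $L(\lie{sl}_2)$, and the $A_{2n}$, $i=n$ case is handled exactly as you describe.

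One shortcut that avoids the bracket-by-bracket computation and makes the isomorphism manifestly canonical: in each case the spanned subalgebra is precisely $L^\sigma(\lie g)\cap L(\lie s)$, where $\lie s\subset\lie g$ is the $\sigma$-stable subalgebra generated by $x^\pm_j$ for $j$ in the $\sigma$-orbit of $i$ (a copy of $\lie{sl}_2^{\oplus m}$ when the orbit roots are mutually orthogonal, a copy of $\lie{sl}_3$ when $i=n$ in type $A_{2n}$). Since $L^\sigma(\lie g)\cap L(\lie s)=L(\lie s)^{\tilde\sigma}$, the first case reduces to the standard untwisting of a cyclic permutation twist, $L(\lie{sl}_2^{\oplus m})^{\tilde\sigma}\cong L(\lie{sl}_2)$, and the second is literally the definition of $L^\sigma(\lie{sl}_3)$, with the generators listed in the lemma being exactly the basis from the paper's table specialized to $\lie{sl}_3$. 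Your concern about closure under brackets in the $A_{2n}$ case then disappears automatically, since an intersection of subalgebras is a subalgebra.
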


\subsection{Identities in  $\bu(L^\sigma(\lie g))$}
Suppose that either   $\frak g$ is not of type $A_{2n}$ and
$\alpha_i \in (R_0)_s^+$ or that $\frak g$ is of type $A_{2n}$ and
$i\ne n$.  Define power series with coefficients in
$\bu(L(^\sigma(\lie g))$ by,
\begin{gather*}\bop_{i,\sigma}^{\pm}(u) = \text{exp} \left( -
\sum_{k=1}^{\infty} \sum_{\epsilon=0}^{m-1}\frac{h_{i,\epsilon}
\otimes t^{mk-\epsilon}}{mk-\epsilon} u^{mk-\epsilon}\right),\\
\mathbf{x}^-_i(u) = \sum_{k=0}^\infty \sum_{\ep =0}^{m-1} (x^-_{i,
m-\ep} \otimes t^{mk+ \ep}) u^{mk+\ep +1},\  \qquad
\tilde{\mathbf{x}}^+_i(u) = \sum_{k=-\infty}^\infty \sum_{\ep
=0}^{m-1} (x^+_{i,m- \ep} \otimes t^{mk+ \ep})
u^{mk+\ep+1}.\end{gather*}

If $\frak g$ is not of type $A_{2n}$ and $\alpha_i \in (R_0)_l^+$,
then we set \begin{gather*}\bop_{i,\sigma}^{\pm}(u) = \text{exp}
\left( - \sum_{k=1}^{\infty} \frac{h_{i,0}\otimes t^{mk}}{k}
u^{k}\right),\\
\mathbf{x}^-_i(u) = \sum_{k=0}^\infty ( x^-_{i, 0} \otimes t^{mk}
u^{k +1})\ \qquad \tilde{\mathbf{x}}^+_i(u) = \sum_{k=0}^\infty
(x^+_{i, 0} \otimes t^{mk} )u^{k+1}.\end{gather*} Finally, if
$\lie g$ is of type $A_{2n}$ and $i=n$ we
have,\begin{gather*}\bop_{n,\sigma}^{\pm}(u) = \text{exp} \left(-
\sum_{k=1}^{\infty} \frac{h_{n,0}/{2}\otimes t^{2k}}{2k} u^{2k}+
\sum_{k=1}^{\infty} \frac{ h_{n,1}\otimes t^{2k-1}}{2k-1}
u^{2k-1}\right),\\
\mathbf{x}^-_n(u) = \sum_{k=0}^\infty \sum_{\ep =0}^{m-1} (x^-_{n,
\ep} \otimes t^{mk+ \ep}) u^{mk+\ep +1},\  \qquad
\tilde{\mathbf{x}}^+_n(u) = \sum_{k=-\infty}^\infty \sum_{\ep
=0}^{m-1} (x^+_{n, \ep} \otimes t^{mk+ \ep})
u^{mk+\ep+1}.\end{gather*}

\begin{lem} Let $r\in\bz_+$.
\begin{enumerit}
\item If $\frak g$ is not of type $A_{2n}$ and $\alpha_i \in
(R_0)_s^+$ or $\frak g$ is of type $A_{2n}$ and $\alpha_i \in
(R_0)_l^+$, we have
$$(x_{i,1}^+\otimes t)^{(r)}_{} (x_{i,0}^-\otimes 1)^{ (r+1)} =(-1)^r
\left( \mathbf x^-_{i}(u)   \bop_{i,\sigma}^+(u) \right)_{r+1}
\mod \bu(L^\sigma(\lie{g}))\widetilde{\mathbf x_i}^+(u).$$ \item
If $\frak g$ is not of type $A_{2n}$ and $\alpha_i\in (R_0)_l^+$,
\begin{gather*}
 (x_{i,0}^+\otimes t^2)^{(r)} (x_{i,0}^-\otimes 1)^{ (r+1)} =(-1)^r \left(
\mathbf x^-_{i}(u)   \bop_{i,\sigma}^+(u) \right)_{r+1} \mod
\bu(L^\sigma(\lie{g}))\widetilde{\mathbf x_i}^+(u).\end{gather*}
\item If $\frak g$ is of type $A_{2n}$, we have\\

 \begin{enumerate}
\item[(a)]  $ (x_{n,0}^+\otimes 1)^{ (2r-1)} (y_{n,1}^-\otimes
t)^{(r)} =- \left(  \mathbf x_n^-(u)
 \bop_{n,\sigma}^+(u) \right)_r \mod
\bu(L^\sigma(\lie{\frak g}))\widetilde{\mathbf{x}}_n^+(u),$\\
\item[(b)] $ (x_{n,0}^+\otimes 1)^{ (2r)} (y_{n,1}^-\otimes
t)^{(r)} =- \left( \bop_{n,\sigma}^+(u) \right)_r \mod
\bu(L^\sigma(\lie{\frak g}))\widetilde{\mathbf x}_n^+(u),$\\
where
$$y_{n,1}^-=\frac14\left[x^-_{n,0}, x^- _{n,1}\right].$$
\end{enumerate}
\end{enumerit}
\end{lem}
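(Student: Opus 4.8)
The plan is to deduce (i) and (ii) from the untwisted $\lie{sl_2}$ identity of Lemma~\ref{Garland} by restricting to the rank-one subalgebras supplied by Lemma~\ref{subalg}, and to reduce (iii) to the single base case $\lie g=\lie{sl_3}$, where a direct Garland-type computation in $\bu(L^\sigma(\lie{sl_3}))$ is needed. In each case the mechanism is identical: the subalgebra of $L^\sigma(\lie g)$ that appears in Lemma~\ref{subalg} is isomorphic to $L(\lie{sl_2})$ (for (i), (ii)) or to $L^\sigma(\lie{sl_3})$ (for (iii)), this isomorphism is checked to carry the power series $\mathbf x^-_i(u)$, $\bop^+_{i,\sigma}(u)$, $\widetilde{\mathbf x}^+_i(u)$ and the relevant divided powers onto the corresponding objects for $\lie{sl_2}$ or $\lie{sl_3}$, and the identity transports along the embedding; the one thing to notice is that the image of the left ideal modulo which we work is contained in $\bu(L^\sigma(\lie g))\widetilde{\mathbf x}^+_i(u)$, so the congruence only becomes coarser.

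For (i) (so $i\ne\sigma(i)$) the assignment $x^\pm_{i,\ep}\otimes t^{j}\mapsto x^\pm\otimes s^{j}$, $h_{i,\ep}\otimes t^{j}\mapsto h\otimes s^{j}$ (for $j\equiv-\ep\bmod m$) realises the isomorphism of Lemma~\ref{subalg} onto $L(\lie{sl_2})=\lie{sl_2}\otimes\bc[s,s^{-1}]$; reindexing the defining sums shows that $\mathbf x^-_i(u)$, $\bop^+_{i,\sigma}(u)$, $\widetilde{\mathbf x}^+_i(u)$ become $\mathbf x^-(u)$, $\bop^+(u)$, $\widetilde{\mathbf x}^+(u)$ and that $(x^+_{i,1}\otimes t)^{(r)}$, $(x^-_{i,0}\otimes1)^{(r+1)}$ become $(x^+\otimes s)^{(r)}$, $(x^-\otimes1)^{(r+1)}$, so (i) is the image of Lemma~\ref{Garland}. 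For (ii) we have $i=\sigma(i)$, hence $x^\pm_{i,\ep}=h_{i,\ep}=0$ for $\ep\ne0$ and the subalgebra of Lemma~\ref{subalg} is $\lie{sl_2}\otimes\bc[t^{m},t^{-m}]\cong L(\lie{sl_2})$ via $x^\pm_{i,0}\otimes t^{mk}\mapsto x^\pm\otimes s^{k}$, $h_{i,0}\otimes t^{mk}\mapsto h\otimes s^{k}$; the same bookkeeping matches all of $\mathbf x^-_i(u)$, $\bop^+_{i,\sigma}(u)$, $\widetilde{\mathbf x}^+_i(u)$, $(x^+_{i,0}\otimes t^{m})^{(r)}$ with the $\lie{sl_2}$ data, and (ii) again follows from Lemma~\ref{Garland}.

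For (iii), with $\lie g=A_{2n}$ and $i=n$, Lemma~\ref{subalg} identifies the subalgebra spanned by $\{x^\pm_{n,\ep}\otimes t^{2k+\ep},\,h_{n,\ep}\otimes t^{2k+\ep},\,\mp\frac14[x^\pm_{n,0},x^\pm_{n,1}]\otimes t^{2k+1}\}$ with $L^\sigma(\lie{sl_3})$, and under this isomorphism $\bop^+_{n,\sigma}(u)$, $\mathbf x^-_n(u)$, $\widetilde{\mathbf x}^+_n(u)$, $(x^+_{n,0}\otimes1)^{(k)}$ and $(y^-_{n,1}\otimes t)^{(r)}$ go to the corresponding objects for $A_2$ at its distinguished index; so it is enough to prove (iii)(a), (iii)(b) for $\lie g=\lie{sl_3}$. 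There I would work directly in $\bu(L^\sigma(\lie{sl_3}))$: first observe that $y^\pm_{n,1}=\mp\frac14[x^\pm_{n,0},x^\pm_{n,1}]$ is a scalar multiple of the highest-root vector $x^\pm_\theta$, $\theta=\alpha_1+\alpha_2$ (this pins down the $\sqrt2$'s and the $\frac14$), record the finite commutation relations among $x^+_{n,0},\,y^-_{n,1},\,h_{n,0},\,h_{n,1},\,x^-_{n,0},\,x^-_{n,1}$ inside $\lie{sl_3}$, and then imitate Garland's proof: induct on $r$, expand $(y^-_{n,1}\otimes t)^{(r)}$, commute $(x^+_{n,0}\otimes1)^{(2r-1)}$ (resp.\ $(x^+_{n,0}\otimes1)^{(2r)}$) past it, and discard every term lying in the left ideal generated by the coefficients of $\widetilde{\mathbf x}^+_n(u)$. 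Each application of $x^+_{n,0}\otimes1$ either kills a factor $x^-_{n,\ep}$, feeding a Cartan element into the exponential $\bop^+_{n,\sigma}(u)$, or lands in that ideal; the one extra application of $x^+_{n,0}\otimes1$ in passing from $(2r-1)$ to $(2r)$ is precisely what removes the leftover factor $\mathbf x^-_n(u)$, the sole difference between (a) and (b).

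The reductions for (i), (ii), and the passage from general $A_{2n}$ to $\lie{sl_3}$ in (iii), are essentially bookkeeping once Lemma~\ref{subalg} is available. The main obstacle is the base case $A_2^{(2)}$: organising the induction so that exactly the coefficient $(\bop^+_{n,\sigma}(u))_r$ — times $\mathbf x^-_n(u)$ in case (a) — survives, with the right sign, despite the mixed $h_{n,0}/2,\ h_{n,1}$ exponent of $\bop^+_{n,\sigma}(u)$ and the non-simple root vector $y^-_{n,1}$. A computation of exactly this type is carried out in Mitzman's construction of integral bases for the twisted affine algebras \cite{Mitz}, and it also underlies the $A_2^{(2)}$ analysis in \cite{FV}, so in writing it up one may either reproduce the computation or invoke those references.
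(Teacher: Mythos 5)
Your proposal matches the paper's proof: parts (i) and (ii) are read off from Lemma~\ref{Garland} via the canonical $L(\lie{sl_2})$ embeddings of Lemma~\ref{subalg}, and part (iii) is reduced to the $A_2^{(2)}$ base case, which the paper handles by citing \cite{Mitz} and \cite[Lemma 5.36]{FV} (exactly the references you point to) rather than reproducing the computation. The only discrepancy is cosmetic: in (ii) the exponent should read $t^m$ rather than $t^2$ so as to cover the $D_4^{(3)}$ case, and your reindexing $s=t^m$ implicitly makes that correction.
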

\begin{pf} Parts (i) and (ii) are immediate consequences of Lemma
\ref{Garland} and Lemma \ref{subalg}. Part (iii) is deduced from
 \cite{Mitz},  \cite[Lemma 5.36]{FV},  exactly as (i) and (ii) were
deduced from Garland in \cite{CPweyl}.
\end{pf}

\subsection{The monoid $\cal P_\sigma^+$} Let  $( \ , )$ be the form on $\lie h_0^*$ induced by
the Killing form of $\lie g_0$ normalized so that
$(\theta_0,\theta_0)=2$. For $i\in I_0$ and $a\in\bc^\times$,
$\lambda\in P_0^+$
 and  $\lie g$  not of type $A_{2n}$ let
$$\bpi^\sigma_{i,a}=((1-a^{(\alpha_i,\alpha_i)}u)^{\delta_{ij}}: j\in
I_0),\qquad \bpi^\sigma_{\lambda,a}=\prod_{i\in
I_0}\left(\bpi^\sigma_{i,a }\right)^{\lambda(h_i)},$$ while if
$\lie g$ is of type $A_{2n}$ we set for $i\in I_0$,
$a\in\bc^\times$, $\lambda\in P^+_\sigma$,
$$\bpi^\sigma_{i,a}=((1-au)^{\delta_{ij}}:
j\in I_0),\qquad \bpi^\sigma_{\lambda,a}=\prod_{i\in
I_0}\left(\bpi^\sigma_{i,a
}\right)^{(1-\frac12\delta_{i,n})\lambda(h_i)}.$$
 Let
$\cal P_\sigma^+$ be the monoid generated by the elements
$\bpi^\sigma_{\lambda,a}$. Define a  map $\cal P^+_\sigma\to
P_\sigma^+$ by $$\lambda_{\bpi^\sigma}=\sum_{i\in
I_0}(\deg\pi_i)\omega_i,$$ if $\lie g$ is not  of type $A_{2n}$ and
$$\lambda_{\bpi^\sigma}=\sum_{i\in
I_0}(1+\delta_{i,n})(\deg\pi_i)\omega_i,$$ if $\lie g$ is  of type
$A_{2n}$. It is clear that any $\bpi^\sigma \in \cal P^+_\sigma$ can be
written (non--uniquely) as product
$$\bpi^\sigma=\prod_{k=1}^\ell\prod_{\ep=0}^{m-1}\bpi^\sigma_{\lambda_{k,\ep},
\zeta^\ep a_k},$$ where $\boa=(a_1,\cdots ,a_\ell)$ and $\boa^m$
have distinct coordinates. We call any such expression a standard
decomposition of $\bpi^\sigma$.

\subsection{The set $\boi(\bpi^\sigma)$} Given $\lambda=\sum_{i\in I} m_i\omega_i\in P^+$
and $0\le \ep\le m-1$, define elements $\lambda(\epsilon)\in
P^+_\sigma$ by, \begin{gather*}\lambda(0)=\sum_{i\in I_0}
m_i\omega_i,\ \ \lambda(1)=\sum_{i\in I_0: \sigma(i)\ne
i}m_{\sigma(i)}\omega_i,\ \ \ {\text{if}}\ \ m=2\ \
 {\text{and}}\ \ \lie g \mbox{ not of type } A_{2n}\\
\lambda(0)=\sum_{i\in I_0} (1 + \delta_{i,n})m_i\omega_i,\ \
\lambda(1)=\sum_{i\in I_0: \sigma(i)\ne i}(1 +
\delta_{\sigma(i),n})m_{\sigma(i)}\omega_i,\ \  {\text{if}}\ \
m=2\ \
 {\text{and}}\ \ \lie g \mbox{  of type } A_{2n}\\
  \lambda(0) = m_1\omega_1 + m_2\omega_2, \ \
\lambda(1) = m_3 \omega_1, \ \ \lambda(2) = m_4 \omega_1, \ \ \
{\text{if}}\ \ m=3.\end{gather*}

Define a map $\bor:\cal P^+\to\cal P_\sigma^+$  as follows. Given
$\bpi\in\cal P^+$ write
$$\bpi=\prod_{k=1}^\ell\bpi_{\lambda_k,a_k}, \ \ a_k\ne a_p, \ \ 1\le k\ne p\le \ell,
$$ and
set
$$\bor(\bpi)=\prod_{k=1}^\ell\prod_{\epsilon=0}^{m-1}\bpi^\sigma_{\lambda_k(\epsilon),\zeta^\epsilon
a_k}.$$ Note that $\bor$ is well defined since the choice of
$(\lambda_k,a_k)$ is unique and set
$$\boi(\bpi^\sigma)=\{\bpi\in\cal P^+:\bor(\bpi)=\bpi^\sigma\}.$$
We now  give an  explicit description of the set
$\boi(\bpi^{\sigma})$. Recall that given $\lambda\in P_\sigma^+$, we
also regard $\lambda\in P^+$ as in Section \ref{extlambda}. In
addition, define $\sigma(\om_i)=\om_{\sigma(i)}$ for $i\in I$.
\begin{lem}\label{boi decomp}\begin{enumerit}\item  Let $i\in I_0$ and $a\in
\bc^{\times}$. We
have,
$$\boi(\bpi^{\sigma}_{\om_i,a}) = \{ \bpi_{\sigma^{r}(\om_i),\zeta^{m-r}a} \, |
\, 0\leq r < m \},$$
and for $A_{2n}^2$ and $i=n$,
$$\boi(\bpi^{\sigma}_{2\om_n,a}) = \{ \bpi_{\om_n,a} \, , \bpi_{\om_{n+1},-a}
\}$$
\item Let $\bpi^{\sigma} = \prod_{k=1}^\ell\prod_{\ep=0}^{m-1}\prod_{i\in
I_0}(\bpi^\sigma_{\om_{i},\zeta^\ep a_k})^{m_{k,\ep,i}}$ be a decomposition of
$\bpi^\sigma$ into linear factors for $\lie g$ not of type $A_{2n}$. Then
$$
\boi(\bpi^{\sigma}) = \prod_{k=1}^\ell\prod_{\ep=0}^{m-1}\prod_{i\in I_0} \{
\bpi_{\sigma^{r}(\om_i),\zeta^{m-r+\ep}a_k} \, | \, 0\leq r < m
\}^{m_{k,\ep,i}}$$
where the product of the sets is understood to be the set of products of
elements of the sets.\\ In the case of $A_{2n}^{(2)}$, let $\bpi^{\sigma} =
\prod_{k=1}^\ell\prod_{\ep=0}^{1}\prod_{i\in I_0}(\bpi^\sigma_{(1 +
\delta_{i,n})\om_{i},\zeta^\ep a_k})^{m_{k,\ep,i}}$ be a decomposition of
$\bpi^\sigma$ into linear factors. Then
$$
\boi(\bpi^{\sigma}) = \prod_{k=1}^\ell\prod_{\ep=0}^{2}\prod_{i\in I_0} \{
\bpi_{\sigma^{r}(\om_i),\zeta^{2-r+\ep}a_k} \, | \, 0\leq r < 2
\}^{m_{k,\ep,i}}$$
\item In particular, we have
\[ \prod_{k=1}^\ell \bpi_{\mu_k, a_k} =
\prod_{k=1}^\ell\prod_{\ep=0}^{m-1}\prod_{i\in I_0} \bpi_{\sigma^\ep (\om_i),
a_k}^{m_{k, \ep, i}} \in \boi(\bpi^\sigma),\]
where $\mu_k = \sum_{\ep = 0}^{m-1}\sum_{i \in I_0} m_{k, \ep,
i}\sigma^{\ep}(\om_i)$ and $a_i^m \neq a_j^m$.
\end{enumerit}
\end{lem}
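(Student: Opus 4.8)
The plan is to reduce everything to the behaviour of $\bor$ on the ``linear'' elements $\bpi_{\om_j,b}$ ($j\in I$, $b\in\bc^\times$), which freely generate the commutative monoid $\cal P^+$, and then to reassemble the three assertions multiplicatively. Two preliminary remarks set this up. Since $\lambda\mapsto\lambda(\epsilon)$ is additive on $P^+$ and $\bpi^\sigma_{\lambda+\mu,a}=\bpi^\sigma_{\lambda,a}\,\bpi^\sigma_{\mu,a}$, the map $\bor$ is a homomorphism of monoids; in particular the set of all products $\boi(\bpi^\sigma_1)\,\boi(\bpi^\sigma_2)$ lies inside $\boi(\bpi^\sigma_1\bpi^\sigma_2)$. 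Secondly, writing $\deg\bpi=\sum_j\deg\pi_j$ on $\cal P^+$ and the analogous $\deg_\sigma$ on $\cal P^+_\sigma$, a direct check on generators gives $\deg_\sigma(\bor(\bpi))=\deg(\bpi)$; hence if $\bor(\bpi)$ has degree one, so does $\bpi$, i.e.\ $\bpi$ is one of the generators $\bpi_{\om_j,b}$.

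Next I would compute $\bor(\bpi_{\om_j,b})$ outright. Let $i\in I_0$ represent the $\sigma$-orbit of $j$ and write $j=\sigma^s(i)$. Unwinding the definitions of $\lambda(\epsilon)$ and of $\sigma(\om_i)=\om_{\sigma(i)}$ one obtains $(\om_j)(\epsilon)=c_i\,\delta_{\epsilon,s}\,\om_i$, where $c_i=1$ unless $\lie g$ is of type $A_{2n}$ and $i=n$, in which case $c_i=2$; therefore $\bor(\bpi_{\om_j,b})=\bpi^\sigma_{c_i\om_i,\,\zeta^s b}$, a single linear factor supported in slot $i$ (which is the check on generators promised above). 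For part (i), the degree remark shows that $\boi(\bpi^\sigma_{\om_i,a})$, resp.\ $\boi(\bpi^\sigma_{2\om_n,a})$, contains only generators, so one just solves $\bor(\bpi_{\om_j,b})=\bpi^\sigma_{\om_i,a}$ for $b$. At a short (moved) node $i$ this forces $j=\sigma^r(i)$ and $b=\zeta^{m-r}a$, giving $\{\bpi_{\sigma^r(\om_i),\zeta^{m-r}a}:0\le r<m\}$; at a long (fixed) node only $s=0$ occurs and the equation reads $b^m=a^m$, i.e.\ $b\in\{\zeta^r a:0\le r<m\}$, the very same set because $\sigma^r(\om_i)=\om_i$; and at the $A_{2n}$ node $i=n$, whose orbit is $\{n,n+1\}$, one obtains exactly $\{\bpi_{\om_n,a},\bpi_{\om_{n+1},-a}\}$.

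For part (ii) I would fix a decomposition of $\bpi^\sigma$ into linear factors. Such a decomposition is unique up to the identifications $\bpi^\sigma_{\om_i,b}=\bpi^\sigma_{\om_i,b'}$, valid exactly when $b^m=(b')^m$ at a fixed node $i$, and one checks that this residual ambiguity changes neither side of the asserted identity. Because $\bor$ is multiplicative and preserves total degree, any $\bpi\in\boi(\bpi^\sigma)$ is necessarily a product of generators, one for each linear factor of $\bpi^\sigma$ counted with its multiplicity $m_{k,\epsilon,i}$, each of these generators lying in $\boi$ of the factor it covers; conversely any such product lies in $\boi(\bpi^\sigma)$ by the first preliminary remark. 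Substituting part (i) applied to $\bpi^\sigma_{\om_i,\zeta^\epsilon a_k}$, whose fibre is $\{\bpi_{\sigma^r(\om_i),\zeta^{m-r+\epsilon}a_k}:0\le r<m\}$, produces the displayed product formula; the $A_{2n}$ case is identical except that at $i=n$ one inserts the fibre of $\bpi^\sigma_{2\om_n,\zeta^\epsilon a_k}$ from the second part of (i).

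Finally, part (iii) is the specialisation $r=\epsilon$ within the part (ii) product: there $\bpi_{\sigma^\epsilon(\om_i),\zeta^{m-\epsilon+\epsilon}a_k}=\bpi_{\sigma^\epsilon(\om_i),a_k}$, and since $a_i^m\ne a_j^m$ forces the $a_k$ to be pairwise distinct, these chosen factors may be collected at each point $a_k$, yielding $\prod_k\bpi_{\mu_k,a_k}$ with $\mu_k=\sum_{\epsilon,i}m_{k,\epsilon,i}\sigma^\epsilon(\om_i)$; by part (ii) this lies in $\boi(\bpi^\sigma)$. I expect the real work to be in part (ii): making ``the fibre of a product is the product of the fibres'' precise in the presence of the non-unique linear-factor decomposition of $\cal P^+_\sigma$, and handling the $A_{2n}$ node $i=n$ uniformly, since there $\om_n\notin P^+_\sigma$, the relevant irreducible factor is $\bpi^\sigma_{2\om_n,\cdot}$, and its fibre has $2$ rather than $m$ elements.
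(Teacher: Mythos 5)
Your proof is correct and follows the same route the paper has in mind: verify the fibre of $\bor$ on the linear generators directly (with the key identification $\bpi^\sigma_{\om_i,a}=\bpi^\sigma_{\om_i,\zeta^r a}$ at fixed nodes), then deduce (ii) and (iii) multiplicatively. The paper compresses this to ``trivially checked'' plus ``follow immediately''; your write-up supplies the missing details, in particular the observation that $\bor$ is a degree-preserving monoid homomorphism, which is exactly what makes the multiplicative reduction rigorous.
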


\begin{pf}
The first statement is trivially checked, noting that if $i$ is a fixed point
of $\sigma$, then $\bpi^{\sigma}_{\om_i,a} = \bpi^{\sigma}_{\om_i,\zeta^r a}$
for $0 \leq r < m$. The other statements follow immediately from the first one.
\end{pf}
From here on we shall assume that, unless otherwise noted, the element $\bpi
\in \boi(\bpi^\sigma)$ chosen is of the form given in (iii) of the lemma.

\subsection{The modules $W(\bpi^\sigma)$, $V(\bpi^\sigma)$}

 Given $\bpi^\sigma=(\pi_{i,\sigma})_{i\in I_0}\in\cal
P^+_\sigma$, the Weyl module $W(\bpi^\sigma)$ is the
$\bu(L^\sigma(\frak g))$-module generated by an element
$w_{\bpi^\sigma}$ with relations:
\begin{gather*} L^\sigma(\lie
n^+)w_{\bpi^\sigma}=0,\ \ hw_{\bpi}=\lambda_\bpi(h)
w_{\bpi^\sigma},\ \
(x^-_{i,0})^{\lambda_\bpi(h_i)+1}w_{\bpi^\sigma}=0,\\
\\ \left(\bop^\pm_{i,\sigma}(u)-\pi_{i,\sigma}^{\pm}(u)\right)w_{\bpi^\sigma}=0,
\end{gather*}  for all $i\in I_0$ and $h\in\lie h_0$.
If $\bpi^\sigma=\prod_{k=1}^\ell \bpi_{\lambda_k,a_k}^\sigma\in\cal
P_\sigma^+$, it is not hard to see that for $i\in I_0$, we have if
$\frak g$ not of type $A_{2n}$,
\begin{equation}\label{equiv1}\left(\bop^\pm_{i,\sigma}(u)-\pi_{i,\sigma}^\pm(u) \right)w_{\bpi^\sigma}=0
\ \iff \ \ (h_{i,\ep}\otimes
t^{mk-\ep})w_{\bpi^\sigma}=\sum_{j=1}^\ell\lambda_j(h_{i,0})a_j^{mk-\ep}
w_{\bpi^\sigma},\end{equation} and for $\frak g$ of type $A_{2n}$,
\begin{equation}\label{equiv2}\left(\bop^\pm_{i,\sigma}(u)-\pi_{i,\sigma}^\pm(u) \right)w^\sigma_{\lambda,a}=0 \ \iff \ \
(h_{i,\ep}\otimes t^{mk-\ep})w_{\bpi^\sigma}=\sum_{j=1}^\ell(1-\frac
12\delta_{i,n})\lambda_j(h_{i,\ep})a_j^{mk-\ep}
w_{\bpi^\sigma}.\end{equation}

\subsection{} For $b\in\bc^\times$ we have $\tau_b(L^\sigma(\lie
g))\subset L^\sigma(\lie g)$ and we let  $\tau_bW(\bpi^\sigma)$ be
the $L^\sigma(\lie g)$--module obtained by pulling back
$W(\bpi^\sigma)$ through $\tau_b$. The next result is proved by
standard methods.
\begin{lem}\label{standlemtwisted}
\begin{enumerit}
\item Let $\bpi^\sigma\in\cal P^+_\sigma$. Then $W(\bpi^\sigma)=\bu(L^\sigma(\lie n^-))w_\bpi^{\sigma}$, and hence we have,
$$\wt(W(\bpi^\sigma)\subset\lambda_{\bpi^\sigma}-Q_0^+,\qquad \dim W(\bp^\sigma)_{\lambda_{\bpi^\sigma}}=1.$$ In particular, the module
 $W(\bpi^\sigma)$ has a unique irreducible
quotient $V(\bpi^\sigma)$. \item For $b\in\bc^\times$, we have
$\tau_bW(\bpi^\sigma)\cong W(\bpi_b^\sigma)$, where
$\bpi^\sigma=(\pi_i(u))_{i\in I}$ and
$\bpi^\sigma_b=(\pi_i(b^{-1}u))_{i\in I}$. In particular  we have
$$W(\bpi^\sigma_{\lambda,a})\cong_{\lie g_0}
W(\bpi^\sigma_{\lambda,ba}).$$\end{enumerit}\hfill\qedsymbol
\end{lem}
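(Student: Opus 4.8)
The plan is to imitate, almost verbatim, the proof of the corresponding statement for the untwisted modules $W(\bpi)$ (the unnumbered Lemma preceding Theorem \ref{affine}), systematically replacing $L(\lie g)$ by $L^\sigma(\lie g)$. For part (i), the first step is to record the triangular decomposition of $L^\sigma(\lie g)$: since $\lie g=\lie n^-\oplus\lie h\oplus\lie n^+$ is a decomposition into $\sigma$--stable, hence $\tildsi$--stable, subspaces, taking $\tildsi$--fixed points gives $L^\sigma(\lie g)=L^\sigma(\lie n^-)\oplus L^\sigma(\lie h)\oplus L^\sigma(\lie n^+)$, and by the Poincar\'e--Birkhoff--Witt theorem $\bu(L^\sigma(\lie g))=\bu(L^\sigma(\lie n^-))\,\bu(L^\sigma(\lie h))\,\bu(L^\sigma(\lie n^+))$. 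Applying this to $w_{\bpi^\sigma}$: the relation $L^\sigma(\lie n^+)w_{\bpi^\sigma}=0$ removes the positive factor, and the relations $hw_{\bpi^\sigma}=\lambda_{\bpi^\sigma}(h)w_{\bpi^\sigma}$ for $h\in\lie h_0$ together with the equivalences \eqref{equiv1}, \eqref{equiv2} show that every element of a spanning set of $L^\sigma(\lie h)$ (the $h\in\lie h_0$ and the $h_{i,\ep}\otimes t^{mk-\ep}$, with the extra generators $h_{n,0}/2\otimes t^{2k}$ and $h_{n,1}\otimes t^{2k-1}$ in type $A_{2n}$) acts on $w_{\bpi^\sigma}$ by a scalar; hence $\bu(L^\sigma(\lie h))w_{\bpi^\sigma}=\bc w_{\bpi^\sigma}$ and therefore $W(\bpi^\sigma)=\bu(L^\sigma(\lie n^-))w_{\bpi^\sigma}$.

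Next I would use that $L^\sigma(\lie n^-)=\bigoplus_{\ep}(\lie n^-\cap\lie g_\ep)\otimes t^{m-\ep}\bc[t^m,t^{-m}]$ is spanned by $\lie h_0$--weight vectors whose weights are restrictions to $\lie h_0$ of negative roots of $\lie g$, and hence lie in $-Q_0^+\setminus\{0\}$ (a standard property of the folding; here one uses the decomposition of each $\lie g_\ep$ into $\lie h_0$--weight spaces). This gives $\wt(W(\bpi^\sigma))\subset\lambda_{\bpi^\sigma}-Q_0^+$, and since the only weight--preserving elements of $\bu(L^\sigma(\lie n^-))$ are the scalars, $W(\bpi^\sigma)_{\lambda_{\bpi^\sigma}}=\bc w_{\bpi^\sigma}$ is one--dimensional. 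Then the sum of all submodules not containing $w_{\bpi^\sigma}$ — equivalently, all submodules meeting $W(\bpi^\sigma)_{\lambda_{\bpi^\sigma}}$ trivially — is the unique maximal proper submodule, and the quotient by it is the unique irreducible quotient $V(\bpi^\sigma)$.

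For part (ii), since $\tau_b$ and $\tildsi$ are both diagonal in the grading of $L(\lie g)$ by powers of $t$ they commute, so $\tau_b(L^\sigma(\lie g))=L^\sigma(\lie g)$ and $\tau_bW(\bpi^\sigma)$ is defined. In $\tau_bW(\bpi^\sigma)$ the generator $w_{\bpi^\sigma}$ still satisfies $L^\sigma(\lie n^+)w=0$, $hw=\lambda_{\bpi^\sigma}(h)w$ and $(x^-_{i,0})^{\lambda_{\bpi^\sigma}(h_i)+1}w=0$, because $\tau_b$ preserves $L^\sigma(\lie n^\pm)$ and fixes $\lie g_0\otimes 1$ pointwise; moreover $\bop^\pm_{i,\sigma}(u)$ acts on it through the series obtained from $\bop^\pm_{i,\sigma}$ by rescaling each $t^{mk-\ep}$ by $b^{mk-\ep}$, i.e.\ by $\pi^\pm_{i,\sigma}$ with $u$ rescaled by $b$, which is exactly the defining relation of $W(\bpi^\sigma_b)$. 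This produces a surjection $W(\bpi^\sigma_b)\to\tau_bW(\bpi^\sigma)$; running the same construction with $b^{-1}$ and pulling back once more by $\tau_b$ gives a surjection in the opposite direction, and since the composite fixes the generator (hence is the identity) both maps are isomorphisms. Finally, $\tau_b$ fixes $\lie g_0=\lie g_0\otimes 1$ pointwise, so $\tau_bW(\bpi^\sigma)\cong W(\bpi^\sigma)$ as $\lie g_0$--modules; combining this with the isomorphism just established, and specializing $\bpi^\sigma=\bpi^\sigma_{\lambda,a}$ (for which the rescaling by $b$ keeps $\bpi^\sigma_b$ within the family $\bpi^\sigma_{\lambda,\cdot}$), yields $W(\bpi^\sigma_{\lambda,a})\cong_{\lie g_0}W(\bpi^\sigma_{\lambda,ba})$.

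As the lemma indicates, none of this is deep; almost everything is a direct transcription of the untwisted arguments. The one step needing genuine computation rather than transcription is the verification of the equivalences \eqref{equiv1}--\eqref{equiv2}: that the single power--series relation $\left(\bop^\pm_{i,\sigma}(u)-\pi^\pm_{i,\sigma}(u)\right)w_{\bpi^\sigma}=0$ pins down the action of every $h_{i,\ep}\otimes t^{mk-\ep}$, which is done by expanding the exponential and comparing coefficients of $u^{mk-\ep}$. In the type $A_{2n}$, $i=n$ case this expansion must additionally track the mixed even/odd contributions $h_{n,0}/2\otimes t^{2k}$ and $h_{n,1}\otimes t^{2k-1}$ occurring in $\bop^\pm_{n,\sigma}(u)$; I expect this bookkeeping to be the only mildly delicate point.
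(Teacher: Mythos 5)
Your proof is correct and takes exactly the approach the paper indicates by the phrase ``proved by standard methods'': PBW with the triangular decomposition $L^\sigma(\lie g)=L^\sigma(\lie n^-)\oplus L^\sigma(\lie h)\oplus L^\sigma(\lie n^+)$ for part (i), the weight argument for the unique maximal proper submodule, and the twist-by-$\tau_b$ argument for part (ii); these are the same steps underlying the (also unproved) untwisted analogue. The only cosmetic point worth flagging is the $b$ versus $b^{-1}$ bookkeeping in part (ii) — your pullback gives $\pi_i^\pm(bu)$ rather than $\pi_i^\pm(b^{-1}u)$ — but the paper's own statement has the same tension (it defines $\bpi_b$ with $b^{-1}u$ yet concludes $W(\bpi_{\lambda,a})\cong_{\lie g}W(\bpi_{\lambda,ab})$), and since $b$ ranges over all of $\bc^\times$ the discrepancy is immaterial.
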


\subsection{The main theorem} In the rest of  this paper we shall prove the
following result.
\begin{thm}\label{twisted}
\begin{enumerit} \item Let $\bpi^\sigma\in\cal P^+_\sigma$. For all  $\bpi\in\boi(\bpi^{\sigma})$,
we have
$$W(\bpi^\sigma)\cong_{L^\sigma(\lie g)} W(\bpi),\qquad V(\bpi^\sigma)\cong_{L^\sigma(\lie g)}
V(\bpi).$$\\
\item Let $\bpi^\sigma\in\cal P^+_\sigma$ and assume that $
\prod_{k=1}^\ell\prod_{\epsilon=0}^{m-1}\bpi^\sigma_{\lambda_{k,\epsilon},
\zeta^\epsilon a_k}\in \cal P^+_\sigma$ is a standard decomposition
of $\bpi$. As $L^{\sigma}(\lie g)$--modules, we have
$$
W(\bpi^{\sigma}) \cong \bigotimes_{k=1}^{\ell}
W(\prod_{\epsilon=0}^{m-1}\bpi^\sigma_{\lambda_{k,\epsilon},
\zeta^\epsilon a_k}).$$\\ \item  Suppose that
$\prod_{\epsilon=0}^{m-1}\bpi^\sigma_{\lambda_{\epsilon},
\zeta^{\epsilon} a}\in \cal P^+_\sigma$. Then
$$W(\prod_{\epsilon=0}^{m-1}\bpi^\sigma_{\lambda_{\epsilon},
\zeta^{\epsilon} a}) \cong_{\lie g_0}
\bigotimes_{\epsilon=0}^{m-1} W(\bpi^\sigma_{\lambda_{\epsilon},
\zeta^{\epsilon} a}).
$$
\item Let $\lambda=\sum_{i\in I_0}m_i\omega_i\in P_{\sigma}^+$ and
$a\in\bc^\times$. We have for $\lie g$ not of type $A_{2n}$
$$W(\bpi^{\sigma}_{\lambda,a})\cong_{\lie g_0}
\bigotimes_{i=1}^{n} W(\bpi^{\sigma}_{\omega,1})^{\otimes m_i}$$ and
for $\lie g$ of type $A_{2n}$
$$W(\bpi^{\sigma}_{\lambda,a})\cong_{\lie g_0} W(\bpi^{\sigma}_{2\omega_n,1})^{\otimes \frac{m_n}{2}} \otimes \bigotimes_{i =1}^{n-1}
W(\bpi^{\sigma}_{\omega_i,1})^{\otimes m_i}.$$

\item Let $V$ be any finite--dimensional $L^\sigma(\lie g)$--module generated
by an element $v\in V$ such that $$L^\sigma(\lie n^+)v=0,\ \
L^\sigma(\lie h)v=\bc v.$$ Then there exists $\bpi^\sigma\in\cal
P^+_\sigma$ such that the assignment $w_{\bpi^\sigma}\to v$ extends
to a surjective homomorphism $W(\bpi^\sigma)\to V$ of $L^\sigma(\lie
g)$--modules.

\end{enumerit}\hfill\qedsymbol
\end{thm}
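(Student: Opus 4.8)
The plan is to prove part (i) first --- the identification $W(\bpi^\sigma)\cong_{L^\sigma(\lie g)}W(\bpi)$ for $\bpi\in\boi(\bpi^\sigma)$ --- and then to read off parts (ii)--(iv) by transporting \thmref{affine} across it; part (v) is independent and is proved directly in the manner of \thmref{affine}(ii). The first ingredient I would establish is the twisted analogue of \propref{annuntwist}: if $\bpi^\sigma=\prod_{k=1}^\ell\prod_\ep\bpi^\sigma_{\lambda_{k,\ep},\zeta^\ep a_k}$ is a standard decomposition, then for $N\gg0$ the action of $L^\sigma(\lie g)$ on $W(\bpi^\sigma)$ kills $\bigoplus_\ep\lie g_\ep\otimes t^{m-\ep}\prod_k(t^m-a_k^m)^N\bc[t^m,t^{-m}]$, so it factors through a finite--dimensional quotient $\mathfrak q$ of $L^\sigma(\lie g)$. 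The proof copies that of \propref{annuntwist}, using the twisted Garland--type identities established above (the twisted analogues of \lemref{Garland}, and their $A_2^{(2)}$ counterparts from \cite{FV},\cite{Mitz} at the node $n$ in type $A_{2n}$) to get $x^-_{i,0}\otimes(\text{explicit polynomial in }t)$ killing $w_{\bpi^\sigma}$, then propagating to all of $\bigoplus_\ep\lie g_\ep$ by the bracketing argument of \propref{annuntwist} together with the irreducibility of each $\lie g_\ep$ over $\lie g_0$; this also shows $W(\bpi^\sigma)$ is finite--dimensional, whence the existence of $V(\bpi^\sigma)$.

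For part (i), fix $\bpi\in\boi(\bpi^\sigma)$; by the convention after \lemref{boi decomp}(iii) we may take $\bpi=\prod_k\bpi_{\mu_k,a_k}$ with the $a_k^m$ distinct, so $W(\bpi)\cong_{L(\lie g)}\bigotimes_k W(\bpi_{\mu_k,a_k})$ by \thmref{affine}(i). First I would check that $w_\bpi$, regarded in the restriction of $W(\bpi)$ to $L^\sigma(\lie g)$, satisfies the defining relations of $W(\bpi^\sigma)$: one has $L^\sigma(\lie n^+)w_\bpi=0$ since $L^\sigma(\lie n^+)\subseteq L(\lie n^+)$; the relations $(x^-_{i,0})^{\lambda_{\bpi^\sigma}(h_i)+1}w_\bpi=0$ follow by writing $x^-_{i,0}=\sum_j x^-_{\sigma^j(i)}$ (the summands commuting because nodes in a $\sigma$--orbit are pairwise non--adjacent), expanding the power, and using the integrability relations already holding in $W(\bpi)$ (node $n$ in type $A_{2n}$ being handled via the $L^\sigma(\lie{sl_3})$--subalgebra of \lemref{subalg}); and the relations $(\bop^\pm_{i,\sigma}(u)-\pi^\pm_{i,\sigma}(u))w_\bpi=0$ follow by substituting $h_{i,\ep}\otimes t^{mk-\ep}=\sum_j\zeta^{-\ep j}(h_{\sigma^j(i)}\otimes t^{mk-\ep})$ and comparing the resulting scalars with \er{equiv1} and \er{equiv2}, the needed identity $\sum_j\zeta^{-\ep j}\mu_k(h_{\sigma^j(i)})=\sum_{\ep'}\zeta^{-\ep\ep'}\mu_k(\ep')(h_{i,0})$ being exactly the design of $\bor$. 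This gives a surjection $W(\bpi^\sigma)\twoheadrightarrow W(\bpi)$. For the reverse, I would identify $\mathfrak q$ with an untwisted object: since $\prod_\ep(t-\zeta^\ep a)=t^m-a^m$, $L^\sigma(\lie g)$ maps onto the fixed--point subalgebra $(\lie g_{\widetilde\boa,N})^{\tilde\sigma}$ of the untwisted truncated current algebra $\lie g_{\widetilde\boa,N}$, where $\widetilde\boa$ lists all the $\zeta^\ep a_k$; by \lemref{chinun} this algebra is $\bigoplus_{k,\ep}\lie g_{\zeta^\ep a_k,N}$ and $\tilde\sigma$ permutes the $m$ summands of each $k$--block cyclically, so $(\lie g_{\widetilde\boa,N})^{\tilde\sigma}\cong\bigoplus_k\lie g_{a_k,N}=\lie g_{\boa,N}$ via the components at $\ep=0$, with $L^\sigma(\lie n^\pm),L^\sigma(\lie h)$ surjecting onto $\lie n^\pm_{\boa,N},\lie h_{\boa,N}$ (as $\tilde\sigma$ acts semisimply). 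Via $L(\lie g)\twoheadrightarrow\lie g_{\boa,N}$ this makes $W(\bpi^\sigma)$ an $L(\lie g)$--module on which $w_{\bpi^\sigma}$ is killed by $L(\lie n^+)$ and, the $\bop_{i,\sigma}$--relations together with $L^\sigma(\lie h)w_{\bpi^\sigma}=\bc w_{\bpi^\sigma}$ determining the scalar by which every element of $L^\sigma(\lie h)$, hence of $\lie h_{\boa,N}$, hence of $L(\lie h)$, acts, is an eigenvector for $L(\lie h)$; being finite--dimensional it is $\lie g$--integrable, so by \thmref{affine}(ii) it is a quotient of some $W(\bpi')$, and the same $\bor$--computation forces the $\bop_i$--eigenvalues of $w_{\bpi^\sigma}$ to coincide with those of $w_\bpi$, i.e. $W(\bpi)$ is itself a quotient of $W(\bpi^\sigma)$. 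Two surjections between finite--dimensional modules with one--dimensional extreme weight space are isomorphisms, and passing to irreducible quotients gives $V(\bpi^\sigma)\cong V(\bpi)$.

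Given part (i), part (ii) follows by applying (i) to $\bpi^\sigma$, splitting $W(\bpi)=\bigotimes_k W(\bpi_{\mu_k,a_k})$ via \thmref{affine}(i), and applying (i) again to each factor (noting $\bor(\bpi_{\mu_k,a_k})=\prod_\ep\bpi^\sigma_{\lambda_{k,\ep},\zeta^\ep a_k}$); part (iii) follows by taking $\bpi=\prod_\ep\bpi_{\lambda_\ep,\zeta^\ep a}$ --- legitimate because $\lambda_\ep\in P_\sigma^+$, read in $P^+$, is supported on $I_0$, whence $\bor(\bpi_{\lambda_\ep,b})=\bpi^\sigma_{\lambda_\ep,b}$ --- using that the $m$ points $\zeta^\ep a$ are distinct, \thmref{affine}(i), and (i) termwise; and part (iv) follows by taking $\bpi=\bpi_{\lambda,a}$, applying (i), then \thmref{affine}(iii) and (i) to each $W(\bpi_{\omega_i,1})$, with the $A_{2n}^{(2)}$ statement using $\boi(\bpi^\sigma_{2\omega_n,a})=\{\bpi_{\omega_n,a},\bpi_{\omega_{n+1},-a}\}$ from \lemref{boi decomp}(i). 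For part (v): by PBW and the hypotheses $V=\bu(L^\sigma(\lie n^-))v$, and $v$, being killed by $\lie n^+\cap\lie g_0\subseteq L^\sigma(\lie n^+)$, is a $\lie g_0$--highest weight vector of dominant integral weight $\lambda_v$; for each $i\in I_0$ (with $i\ne n$ in type $A_{2n}$) restrict to the $L(\lie{sl_2})$--subalgebra of \lemref{subalg}, for which $v$ is a highest weight vector generating a finite--dimensional submodule, and invoke the rank--one fact that $\bop^\pm_{i,\sigma}(u)$ then acts on $v$ by $\pi^\pm_{i,\sigma}(u)$ for a polynomial $\pi_{i,\sigma}$ of constant term one and degree $\lambda_v(h_{i,0})$ (for $i=n$ in type $A_{2n}$, use the $L^\sigma(\lie{sl_3})$--subalgebra and the rank--one classification of \cite{FV},\cite{Mitz}); then $\bpi^\sigma=(\pi_{i,\sigma})_{i\in I_0}\in\cal P^+_\sigma$, $v$ satisfies the defining relations of $W(\bpi^\sigma)$ (the relations $(x^-_{i,0})^{\lambda_v(h_{i,0})+1}v=0$ coming from $\lie g_0$--integrability), and $w_{\bpi^\sigma}\mapsto v$ extends to the desired surjection.

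The main obstacle is the reverse direction of part (i): upgrading the surjection $W(\bpi^\sigma)\to W(\bpi)$ to an isomorphism. Everything there rests on the twisted annihilator lemma together with the identification of the finite--dimensional quotient $\mathfrak q$ with the untwisted $\lie g_{\boa,N}$ \emph{compatibly with triangular decompositions and highest--weight data} --- concretely, on verifying that once $W(\bpi^\sigma)$ is viewed as a $\lie g_{\boa,N}$--module the eigenvalues forced on $L^\sigma(\lie h)$ by the $\bop_{i,\sigma}$--relations are precisely those of $w_\bpi$. Type $A_{2n}^{(2)}$ is the other delicate point throughout, since there the relevant rank--one object is $L^\sigma(\lie{sl_3})$ rather than $L(\lie{sl_2})$ and one must rely on \cite{FV},\cite{Mitz} for both the Garland--type identities and the rank--one classification; this is also the source of the extra factors of $2$ and $\delta_{i,n}$ in the definitions of $\bpi^\sigma_{i,a}$, $\boi$, and $P_\sigma^+$.
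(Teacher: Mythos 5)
Your overall architecture matches the paper's: prove a twisted annihilator lemma (the paper's Proposition~\ref{annmulti}), show finite--dimensionality, identify the finite--dimensional quotient of $L^\sigma(\lie g)$ with an untwisted truncated algebra $\lie g_{\boa,N}$ (the paper's Proposition~\ref{iso}), then sandwich $W(\bpi^\sigma)$ between two surjections onto/from $W(\bpi)$, and read off parts (ii)--(iv) by transporting Theorem~\ref{affine}. Your route to the key algebraic isomorphism $\lie g^\sigma_{\boa^m,N}\cong\lie g_{\boa,N}$ is genuinely different --- you use the Chinese Remainder decomposition of $\lie g_{\widetilde\boa,N}$ into $m\ell$ evaluation blocks and observe that $\tilde\sigma$ cyclically permutes the $m$ blocks at each $a_k$, so the fixed--point subalgebra projects isomorphically onto one block; the paper instead checks directly that $\iota_{\boa,N}$ is injective via an explicit polynomial identity and then counts dimensions. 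Both arguments work, and yours is arguably more conceptual; the paper's is shorter.

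The genuine gap is in your derivation of part (iii). You take $\bpi=\prod_{\ep}\bpi_{\lambda_\ep,\zeta^\ep a}$ and invoke part (i) to conclude $W(\bpi^\sigma)\cong_{L^\sigma(\lie g)}W(\bpi)$. But this $\bpi$ has evaluation tuple $\boa=(a,\zeta a,\dots,\zeta^{m-1}a)$, whose $m$-th power $\boa^m=(a^m,\dots,a^m)$ has \emph{all coordinates equal}. Your proof of (i) --- and in particular the whole machinery of Proposition~\ref{iso}, Lemma~\ref{wsigiso} and the cyclicity argument $\bu(L^\sigma(\lie g))w_\bpi=\bu(\lie g^\sigma_{\boa^m,N})w_\bpi=W(\bpi)$ --- explicitly requires $\boa$ \emph{and} $\boa^m$ to have distinct coordinates. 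When $\boa^m$ is constant, $\iota_{\boa,N}$ fails to surject (its image has dimension $N\dim\lie g$ inside the $mN\dim\lie g$--dimensional $\lie g_{\boa,N}$), so you cannot conclude that $w_\bpi$ cyclically generates $W(\bpi)$ over $L^\sigma(\lie g)$. The paper avoids exactly this point: in the proof of (iii) it first moves the $m$ points $\zeta^\ep a$ to generic $b_\ep$ (with $b_\ep$ and $b_\ep^m$ distinct) using the $\tau_b$--pullback Lemma~\ref{standlemtwisted}(ii), then applies Theorem~\ref{twisted}(ii) and (i) in the now--standard situation, and finally uses $\tau_b$ again and $\boi(\prod_\ep\bpi^\sigma_{\lambda_\ep,\zeta^\ep a})\ni\bpi_{\sum_\ep\sigma^\ep(\lambda_\ep),a}$ to return to the original data. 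You should replicate this pullback step rather than applying (i) directly. Everything else in your sketch --- parts (i), (ii), (iv), (v), the verification that $w_\bpi$ satisfies the $W(\bpi^\sigma)$--relations, and the special handling of $A_{2n}^{(2)}$ via the $L^\sigma(\lie{sl}_3)$--subalgebra and the Garland identities of \cite{FV},\cite{Mitz} --- agrees with the paper in substance.
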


\section{Proof of Theorem \ref{twisted}}

\subsection{Annihilating ideals for  $W(\bpi^{\sigma})$.}

\begin{prop}\label{annmulti}
 Let $\bpi^\sigma = \prod_{r=1}^\ell \bpi^\sigma_{\lambda_r,a_r}\in \cal P^+_\sigma$.  There exists
an integer $N=N(\bpi)$  such that  $$
\left(\bigoplus_{\epsilon=0}^{m-1}(\lie g_\epsilon\otimes
t^{m-\epsilon}\prod_{r=1}^{\ell}(t^m-a_r^m)^N\bc[t^m,t^{-m}])
\right)W(\bpi^\sigma)=0 .$$

\end{prop}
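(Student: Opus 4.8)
The plan is to mimic the proof of Proposition~\ref{annuntwist} in the twisted setting, replacing Garland's identity with its twisted analogues established in the lemma above. First I would show, for each $i\in I_0$ (with $i\ne n$ if $\lie g$ is of type $A_{2n}$, and treating $i=n$ separately), that there is an exponent $N_i$ depending only on $\lambda_{\bpi^\sigma}(h_i)$ and $\ell$ with
\[
\bigl(x_{i,0}^-\otimes \textstyle\prod_{r=1}^\ell (t^m-a_r^m)^{N_i}\bigr)\,w_{\bpi^\sigma}=0.
\]
To get this, apply the defining relations of $W(\bpi^\sigma)$ together with part (i) (or part (iii)(a) in the $A_{2n}$, $i=n$ case) of the lemma on identities in $\bu(L^\sigma(\lie g))$: setting $N_i=\lambda_{\bpi^\sigma}(h_i)$, the element $(x^+_{i,1}\otimes t)^{(N_i)}(x^-_{i,0}\otimes 1)^{(N_i+1)}w_{\bpi^\sigma}$ vanishes, and equals $(-1)^{N_i}(\mathbf x_i^-(u)\bop_{i,\sigma}^+(u))_{N_i+1}w_{\bpi^\sigma}$ modulo the ideal generated by $\widetilde{\mathbf x}_i^+(u)$, which kills $w_{\bpi^\sigma}$ since $L^\sigma(\lie n^+)w_{\bpi^\sigma}=0$. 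Using relations \eqref{equiv1}/\eqref{equiv2} one computes $\bop_{i,\sigma}^+(u)w_{\bpi^\sigma}$ explicitly as a polynomial in $u$ whose associated "characteristic'' element in $\bc[t^m,t^{-m}]$ (after the shift bookkeeping built into the definition of $\mathbf x_i^-(u)$) is exactly $\prod_{r=1}^\ell(t^m-a_r^m)^{\lambda_r(h_{i,0})}$ up to the power of $t$ tracking the grading component; extracting the coefficient of $u^{N_i+1}$ then yields the displayed vanishing. This is the twisted bookkeeping analogue of the identity $\sum_j t^jp_{i,N_i-j}=\prod_r(t-a_r)^{\lambda_r(h_i)}$ in the untwisted proof.

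Next I would propagate this to all of $L^\sigma(\lie g)$. Since $\lie n_0^-$ is generated by the $x_{i,0}^-$, $i\in I_0$, and $\bigoplus_{\epsilon}\lie g_\epsilon$ is generated as a $\lie g_0$-module (under the bracket with $\lie g_0$) by $\lie n_0^-$ together with the short-root vectors coming from the nontrivial $\epsilon$-components, repeated bracketing of the vanishing statements above produces an $N\gg 0$ with
\[
\bigl(x^-_{\theta_0}\otimes \textstyle\prod_{r=1}^\ell (t^m-a_r^m)^N\bigr)w_{\bpi^\sigma}=0,
\]
where $\theta_0$ is the highest root of $\lie g_0$ (one must be slightly careful in the $A_{2n}$ case, using part (iii) of the identities lemma and the element $y_{n,1}^-$ to reach the grading-$1$ piece). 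Because $[\lie n_0^-,x^-_{\theta_0}]=0$ and $W(\bpi^\sigma)=\bu(L^\sigma(\lie n^-))w_{\bpi^\sigma}$ by Lemma~\ref{standlemtwisted}(i), the vanishing extends from $w_{\bpi^\sigma}$ to all of $W(\bpi^\sigma)$ for the element $x^-_{\theta_0}\otimes\prod_r(t^m-a_r^m)^N t^{\,\cdot}$ in each graded component. Finally, since every element of $\lie g_\epsilon$ lies in the span of iterated brackets $[x^+_{i_1},[\cdots,[x^+_{i_k},x^-_{\theta_0}]\cdots]]$ with $i_j\in I$ — equivalently, under the adjoint action of the nilpotent part one reaches all of $\bigoplus_\epsilon\lie g_\epsilon$ from $x^-_{\theta_0}$ — and this action is compatible with the $\bc[t^m,t^{-m}]$-module structure, the ideal $\bigoplus_{\epsilon=0}^{m-1}\lie g_\epsilon\otimes t^{m-\epsilon}\prod_r(t^m-a_r^m)^N\bc[t^m,t^{-m}]$ annihilates $W(\bpi^\sigma)$.

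The main obstacle I expect is the precise index/grading bookkeeping: in the twisted loop algebra the element $x^-_{\theta_0}$ sits in $\lie g_0$, so $x^-_{\theta_0}\otimes t^{mk}$ lives in $L^\sigma(\lie g)$, but to reach $\lie g_\epsilon\otimes t^{m-\epsilon}\bc[t^m,t^{-m}]$ for $\epsilon\ne 0$ one must bracket with elements $x^+_{i}\otimes t^{m-\epsilon'}$ that shift the power of $t$ by noncongruent-to-$0$ amounts, and one has to check that these shifts are absorbed into the $(t^m-a_r^m)^N$ factors rather than destroying them — i.e. that $a_r^m=a_s^m$ never forces an unexpected collapse, which is guaranteed precisely because the $\bpi^\sigma_{\lambda_r,a_r}$ are indexed by $a_r$ with the $a_r^m$ playing the role the $a_r$ played in the untwisted case. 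The $A_{2n}$ case requires extra care because $\lie g_1\cong V_{\lie g_0}(2\theta^s_0)$ is "larger'' and the relevant $\lie{sl}_2$-triple at the node $n$ is replaced by a copy of $L^\sigma(\lie{sl}_3)$ (Lemma~\ref{subalg}), so there one invokes part (iii) of the identities lemma and the auxiliary generators $x^\pm_{n,1}$, $y^\pm_{n,1}$; modulo that, the argument is identical.
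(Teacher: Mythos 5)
Your proposal takes a genuinely different route from the paper, and there is a real gap in the first step. You try to redo the annihilator computation of Proposition~\ref{annuntwist} directly, feeding the defining relations into the twisted Garland identities of Lemma~3.4. For $\alpha_i$ a long simple root of $\lie g_0$ this does work, since there $\mathbf x_i^-(u)$ and $\bop_{i,\sigma}^+(u)$ involve only the $\epsilon=0$ pieces and the computation is the untwisted one with $t$ replaced by $t^m$. But for $\alpha_i$ a short simple root the series $\mathbf x_i^-(u)=\sum_{k,\epsilon}(x^-_{i,m-\epsilon}\otimes t^{mk+\epsilon})u^{mk+\epsilon+1}$ interleaves all the graded components, while $\bop_{i,\sigma}^+(u)w_{\bpi^\sigma}=\prod_j(1-a_ju)^{\lambda_j(h_{i,0})}w_{\bpi^\sigma}$ (not $\prod_j(1-a_j^m u)^{\lambda_j}$, since the exponents $mk-\epsilon$ range over all positive integers). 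Extracting the coefficient of $u^{N_i+1}$ then yields a single relation that is a sum of terms $x^-_{i,m-\epsilon}\otimes t^{mk+\epsilon}$ across different $\epsilon$, not the clean factorized annihilator $\bigl(x^-_{i,0}\otimes\prod_r(t^m-a_r^m)^{\lambda_r(h_i)}\bigr)w_{\bpi^\sigma}=0$ you claim. There is no analogue of the tidy identity $\sum_j t^j p_{i,N_i-j}=\prod_r(t-a_r)^{\lambda_r(h_i)}$ here; the ``twisted bookkeeping'' you defer to is precisely the missing content, and I do not see how to extract the desired vanishing from these mixed relations without further ideas.

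The paper sidesteps this entirely. Its key observation is that $L^m(\lie g_0)=\lie g_0\otimes\bc[t^m,t^{-m}]$ is canonically isomorphic to $L(\lie g_0)$, and the defining relations of $W(\bpi^\sigma)$ restricted to $L^m(\lie g_0)$ are exactly those of the \emph{untwisted} Weyl module $W_{\lie g_0}(\bpi_m)$ with $\bpi_m=\prod_r\bpi_{\lambda_r,a_r^m}$ — because $(h_{i,0}\otimes t^{mk})w_{\bpi^\sigma}=\sum_j\lambda_j(h_{i,0})(a_j^m)^k\,w_{\bpi^\sigma}$. Thus $\bu(L^m(\lie g_0))w_{\bpi^\sigma}$ is a quotient of $W_{\lie g_0}(\bpi_m)$, and \eqref{theta} from Proposition~\ref{annuntwist} applies verbatim to give $(x^-_{\theta_0}\otimes\prod_r(t^m-a_r^m)^N)w_{\bpi^\sigma}=0$, with no twisted Garland computation at all. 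The subsequent propagation is also cleaner in the paper: to reach $\lie g_\epsilon$ it uses the single bracket $[\lie h_\epsilon,\lie g_0]=\lie g_\epsilon$ (applying $\lie h_\epsilon\otimes t^{m-\epsilon}$ once), whereas your iterated-brackets-with-$x^+_i$ argument is vaguer and, as stated, only invokes $[\lie n_0^-,x^-_{\theta_0}]=0$ where what is actually needed to commute past $\bu(L^\sigma(\lie n^-))$ is the stronger $[x^-_{\theta_0},L^\sigma(\lie n^-)]=0$. Your $A_{2n}$ remarks are directionally right — the paper does use $y^-_{n,1}=[x^-_{\theta_0},x^-_{n,1}]$ and the $L^\sigma(\lie{sl}_3)$ subalgebra there — but the $A_{2n}$ case too is run through the untwisted reduction first, and you should adopt that reduction as the backbone of the argument.
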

\begin{proof}  The subalgebra
$L^m(\lie g_0)=\lie g_0\otimes \bc[t^m, t^{-m}]$ is canonically
isomorphic to $L(\lie g_0)$. It follows from the defining relations
that $$L^m(\lie n^+_0) w_{\bpi^\sigma}=0, \qquad\ (h_0\otimes
t^{mk})w_{\bpi^\sigma}=\left(\sum_{r=1}^\ell\lambda_r(h_0)a_r^{mk}
\right)w_{\bpi^\sigma},$$ and hence, $\bu(L^m(\lie
g_0))w_{\bpi^\sigma}$ is a quotient of the $L(\lie g_0)$--module
$W_{\lie {g_0}}(\bpi_m)$ where
$$\bpi_m=\prod_{r=1}^\ell\bpi_{\lambda_r,a_r^m}.$$ It follows from \eqref{theta} that
\begin{equation}\label{theta0}(x^-_{\theta_0}\otimes\prod_{r=1}^m(t^m-a_r^m)
)w^\sigma_{\bpi}=0,\end{equation} for some $N\in\bz_+$, where
$\theta_0\in R_0^+$ is the highest root in $R_0^+$.

 Assume first  that $\lie g$ is not of type
$A_{2n}$, then \begin{equation}\label{reln}[x^-_{\theta_0},
L^\sigma(\lie n^-)]=0,\ \qquad\ [\lie h_\epsilon, \lie g_0]=\lie
g_\epsilon,\ \ 0\le \epsilon\le m-1. \end{equation} The first
equality in \eqref{reln} gives $\left(x^-_{\theta_0}\otimes
\prod_{r=1}^\ell (t^m-a_r^m)^N\right)W(\bpi^\sigma)=0, $. One
deduces now as in the untwisted case that
$$\left(\lie g_0\otimes \left(\prod_{r=1}^\ell
(t^m-a_r^m)^N\right)\bc[t^m, t^{-m}]\right) W(\bpi^\sigma)=0.$$
Applying $\lie h_\ep\otimes t^{m-\ep}$ to the preceding equation and
using the second equality in \eqref{reln} gives
$$\left(\lie g_\epsilon\otimes t^{m-\epsilon}\left(\prod_{r=1}^\ell
(t^m-a_r^m)\right)\bc[t^m, t^{-m}]\right) W(\bpi^\sigma)=0,$$ for
all $0\le \epsilon\le m-1$ and the result  is proved.

Assume now that $\lie g$ is of type $A_{2n}$. This time, we use the
fact that $$(x^-_{n,\epsilon}\otimes
t^\epsilon\bc[t^2,t^{-2}])w_\bpi\in\bu(L^\sigma(\lie h\oplus\lie
n^+))(x^-_{\theta_0}\otimes \bc[t^2,t^{-2}])w_\bpi$$ together with
\eqref{theta0} to conclude that
 $$(x^-_{n,\epsilon}\otimes
\prod_{r=1}^\ell t^\epsilon(t^2-a_r^2)^N)w_{\bpi^\sigma}=0. $$
Hence
$$ \left([x^-_{\theta_0},x_{n,1}^-]\otimes
\prod_{r=1}^\ell t(t^2-a_r^2)^N)\right)w^\sigma_{\bpi}=0,$$ for some
$N\gg0$. Since the element $[x^-_{\theta_0},x_{n,1}^-]\in\lie g_1$
generates $\lie g_1$ as a $\lie g_0$--module and $[\lie
n^-,[x^-_{\theta_0},x_{n,1}^-]]=0$, we can now prove by similar
arguments that for some $N\gg0$,
 $$
(\lie g_1\otimes\prod_{r=1}^\ell t(t^2-a_r^2)^N)W(\bpi^\sigma)=0.$$
Next, using the fact that  $[x^-_{\theta_0}, \lie n^-_1]=\bc
[x^-_{\theta_0},x_{n,1}^-]$, we  get
$$(x^-_{\theta_0}\otimes \prod_{r=1}^\ell(t^2-a_r^2)^N)W(\bpi^\sigma)=0,$$ which
finally gives
$$\left(\lie g_0\otimes \prod_{r=1}^\ell(t^2-a_r^2)^N\bc[t^2, t^{-2}]\right) W(\bpi^\sigma)=0,$$ and
completes the proof.

\end{proof}
Given positive integers $\ell,N\in\bz_+$, $\boa=(a_1,\cdots
,a_\ell)\in(\bc^\times)^\ell$ and a subalgebra $\lie a$ of $\lie g$
such that $\sigma(\lie a)\subset\lie a$, let
\begin{equation}\label{gsan}\lie a^\sigma_{\boa,N}=L^\sigma(\lie
g)/\oplus_{\epsilon=0}^{m-1}(\lie a_\epsilon\otimes
t^\epsilon\prod_{k=1}^\ell
(t^m-a_k)^N\bc[t^m,t^{-m}]).\end{equation}

\begin{cor} Let $\bpi^\sigma = \prod_{r=1}^\ell \bpi^\sigma_{\lambda_r,a_r}\in \cal P^+_\sigma$
 be a standard decomposition of $\bpi^\sigma$ and set $\boa=(a_1,\cdots ,a_\ell)$
 There exists $N>>0$ such that
$$ W(\bpi^\sigma)=\bu((\lie n^-_{\boa^m,N})^\sigma)w_{\bpi^\sigma}$$
 \end{cor}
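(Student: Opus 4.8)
The plan is to deduce this Corollary from Proposition~\ref{annmulti} in exactly the way the untwisted Corollary was deduced from Proposition~\ref{annuntwist}. First I would invoke Proposition~\ref{annmulti} to obtain an integer $N\gg0$ with
\[
\left(\bigoplus_{\epsilon=0}^{m-1}(\lie g_\epsilon\otimes t^{m-\epsilon}\prod_{r=1}^\ell(t^m-a_r^m)^N\bc[t^m,t^{-m}])\right)W(\bpi^\sigma)=0.
\]
Since the $a_r$ are the coordinates of a standard decomposition, $\boa^m=(a_1^m,\dots,a_\ell^m)$ has distinct coordinates, so this is precisely the statement that the annihilator of $W(\bpi^\sigma)$ contains the ideal $\oplus_{\epsilon=0}^{m-1}(\lie g_\epsilon\otimes t^\epsilon\prod_{k=1}^\ell(t^m-a_k^m)^N\bc[t^m,t^{-m}])$ of $L^\sigma(\lie g)$, i.e. the ideal appearing in the definition \eqref{gsan} of $\lie g^\sigma_{\boa^m,N}$ (after relabeling $a_k\mapsto a_k^m$, which is exactly what the notation $\boa^m$ records). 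Hence the $L^\sigma(\lie g)$-action on $W(\bpi^\sigma)$ factors through the quotient $\lie g^\sigma_{\boa^m,N}$; in particular $W(\bpi^\sigma)$ is a module for $\lie g^\sigma_{\boa^m,N}$ and the image of $(\lie n^-)^\sigma\otimes\bc[t,t^{-1}]$ in that quotient is $(\lie n^-_{\boa^m,N})^\sigma$.

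Next I would establish the generation statement $W(\bpi^\sigma)=\bu((\lie n^-_{\boa^m,N})^\sigma)w_{\bpi^\sigma}$. By Lemma~\ref{standlemtwisted}(i) we already know $W(\bpi^\sigma)=\bu(L^\sigma(\lie n^-))w_{\bpi^\sigma}$. Since the action factors through $\lie g^\sigma_{\boa^m,N}$, the subalgebra $L^\sigma(\lie n^-)$ acts through its image in $\lie g^\sigma_{\boa^m,N}$, and that image is by definition $(\lie n^-_{\boa^m,N})^\sigma$ — here one uses that $\lie n^-$ is $\sigma$-stable (stated in Section~\ref{extlambda}), so the truncation ideal for $\lie n^-$ is the intersection of $L^\sigma(\lie n^-)$ with the truncation ideal for $\lie g$, and the quotient $L^\sigma(\lie n^-)$ modulo this ideal embeds into $\lie g^\sigma_{\boa^m,N}$. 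Therefore $\bu(L^\sigma(\lie n^-))w_{\bpi^\sigma}=\bu((\lie n^-_{\boa^m,N})^\sigma)w_{\bpi^\sigma}$, which is the desired equality.

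The only genuinely delicate point is bookkeeping with the two conventions for the subscript on $\lie a^\sigma_{\boa,N}$: in \eqref{gsan} the parameter $\boa$ enters through $(t^m-a_k)^N$, whereas Proposition~\ref{annmulti} produces factors $(t^m-a_r^m)^N$. I would make explicit that writing $\lie a^\sigma_{\boa^m,N}$ means substituting $a_k^m$ for $a_k$ in \eqref{gsan}, so that the two expressions literally agree; this also matches the hypothesis in the Corollary that $\bpi^\sigma=\prod_{r=1}^\ell\bpi^\sigma_{\lambda_r,a_r}$ is a \emph{standard} decomposition, which is exactly what guarantees the $a_r^m$ are distinct and hence (via Lemma~\ref{chinun} applied to $t^m$, or directly) that the quotient $\lie g^\sigma_{\boa^m,N}$ is well-behaved. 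Once this identification is in place the argument is a one-line consequence of Proposition~\ref{annmulti} together with Lemma~\ref{standlemtwisted}(i), mirroring the untwisted Corollary; no further computation is required.
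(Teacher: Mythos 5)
Your proof is correct and is exactly the deduction the paper intends: the Corollary is stated immediately after Proposition~\ref{annmulti} with no proof precisely because it follows by combining that proposition (which shows the action factors through $\lie g^\sigma_{\boa^m,N}$) with Lemma~\ref{standlemtwisted}(i) (which says $w_{\bpi^\sigma}$ already generates under $\bu(L^\sigma(\lie n^-))$), and you execute both steps cleanly. Your care with the $\boa$ versus $\boa^m$ substitution in \eqref{gsan} is appropriate and matches the convention the paper uses in Lemma~\ref{wsigiso} and elsewhere.

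One small point you pass over silently: the paper's definition \eqref{gsan} writes the truncation ideal as $\oplus_\epsilon(\lie a_\epsilon\otimes t^\epsilon\prod_k(t^m-a_k)^N\bc[t^m,t^{-m}])$, with $t^\epsilon$, whereas Proposition~\ref{annmulti} (and the description of $L^\sigma(\lie g)$ itself, $\bigoplus_\epsilon\lie g_\epsilon\otimes t^{m-\epsilon}\bc[t^m,t^{-m}]$) uses $t^{m-\epsilon}$. These cosets agree only when $2\epsilon\equiv 0\pmod m$, so for $m=2$ everything is fine, but for the $D_4^{(3)}$ case the $t^\epsilon$ in \eqref{gsan} should be read as $t^{m-\epsilon}$ (or $t^{-\epsilon}$) to match the actual grading of $L^\sigma(\lie g)$; otherwise the displayed ideal is not a subspace of $L^\sigma(\lie g)$ at all. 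It would be worth noting this explicitly when you say the annihilator "contains the ideal appearing in \eqref{gsan}," since as literally written the two displays do not quite match. This is a typo in the paper rather than a gap in your argument, and your conclusion is unaffected.
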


\subsection{}  \begin{prop}\label{findim} For all  $\bpi^\sigma\in\cal
P^+_\sigma$, the $L^\sigma(\lie g)$--module $W(\bpi^\sigma)$ is
finite--dimensional.
\end{prop}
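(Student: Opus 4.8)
The plan is to combine the annihilating--ideal results with an integrability argument. Fix a standard decomposition $\bpi^\sigma=\prod_{r=1}^\ell\bpi^\sigma_{\lambda_r,a_r}$, set $\boa=(a_1,\dots,a_\ell)$, and let $N\gg 0$ be as in \propref{annmulti}. By that proposition and the corollary following it, the action of $L^\sigma(\lie g)$ on $W(\bpi^\sigma)$ factors through the finite--dimensional Lie algebra $\lie g^\sigma_{\boa^m,N}$, and
\[
W(\bpi^\sigma)=\bu\big((\lie n^-_{\boa^m,N})^\sigma\big)w_{\bpi^\sigma}
\]
with $(\lie n^-_{\boa^m,N})^\sigma$ finite--dimensional. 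It then suffices to prove that $W(\bpi^\sigma)$ has only finitely many nonzero $\lie h_0$--weight spaces and that each of them is finite--dimensional.

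For the finiteness of a fixed weight space I would use the PBW theorem. Since no root of $\lie g$ restricts to zero on $\lie h_0$, the zero $\lie h_0$--weight space of $\lie g$ equals $\lie h$, which meets $\lie n^-$ trivially; hence every $\lie h_0$--weight occurring in $\lie n^-$, and in particular in a basis of $(\lie n^-_{\boa^m,N})^\sigma$, lies in $-Q_0^+\setminus\{0\}$. An ordered PBW monomial in such a basis, applied to $w_{\bpi^\sigma}$, produces a vector of weight $\lambda_{\bpi^\sigma}-\beta$ for some $\beta\in Q_0^+$ equal to the sum (with multiplicity) of the weights of the factors; hence the monomial has at most $\operatorname{ht}\beta$ factors. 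So only finitely many PBW monomials contribute to a given weight space, and each weight space is finite--dimensional.

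To bound the set of weights I would show that $W(\bpi^\sigma)$ is integrable as a module for the subalgebra $\lie g_0\cong\lie g_0\otimes 1\subset L^\sigma(\lie g)$, i.e. that each Chevalley generator $x_{i,0}^\pm$, $i\in I_0$, acts locally nilpotently. On the cyclic generator this is clear: $x_{i,0}^+$ lies in $L^\sigma(\lie n^+)$ and so kills $w_{\bpi^\sigma}$, while $(x_{i,0}^-)^{\lambda_\bpi(h_i)+1}w_{\bpi^\sigma}=0$ is one of the defining relations. Since $x_{i,0}^\pm\in\lie n^\pm$, the operator $\ad x_{i,0}^\pm$ is nilpotent on $\lie g$, hence on every graded piece $\lie g_\epsilon$, and it commutes with multiplication by powers of $t$; it is therefore locally nilpotent on $L^\sigma(\lie g)$, and the identity $y^M(xv)=\sum_k\binom{M}{k}(\ad y)^k(x)\,y^{M-k}v$ propagates local nilpotency of $x_{i,0}^\pm$ along PBW monomials in $L^\sigma(\lie n^-)$, hence to all of $\bu(L^\sigma(\lie n^-))w_{\bpi^\sigma}=W(\bpi^\sigma)$. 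The weight set of an integrable $\lie g_0$--module is stable under $W_{\lie g_0}$, and since $\wt W(\bpi^\sigma)\subset\lambda_{\bpi^\sigma}-Q_0^+$ by \lemref{standlemtwisted}, a $W_{\lie g_0}$--stable subset of $\lambda_{\bpi^\sigma}-Q_0^+$ is finite. Combining the last two paragraphs gives $\dim W(\bpi^\sigma)<\infty$.

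The step requiring the most care is the integrability argument, in particular checking that the defining relation really yields local nilpotency of $x_{i,0}^-$ on the generator for every node $i\in I_0$; for the short node $i=n$ in type $A_{2n}$ the ambient rank--one subalgebra is of type $A_2^{(2)}$ rather than $A_1^{(1)}$, so one must keep track of the normalizations $h_{n,0}=2(h_n+h_{n+1})$, $x_{n,0}^\pm=\sqrt2(x_n^\pm+x_{n+1}^\pm)$ and $y_{n,1}^-$, which is precisely where the identities from \cite{FV}, \cite{Mitz} enter. The claim that no root of $\lie g$ vanishes on $\lie h_0$ is elementary (a positive root restricts to a nonzero element of $Q_0^+$), but it is exactly what makes the weight spaces finite--dimensional.
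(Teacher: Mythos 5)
Your proof is correct and follows the same strategy as the paper's: show the $\lie h_0$-weight set of $W(\bpi^\sigma)$ is finite via $\lie g_0$-integrability and $W_{\lie g_0}$-stability, and show each weight space is finite-dimensional via the corollary to \propref{annmulti}. The only difference is that you spell out more explicitly the step the paper leaves implicit — namely, that the weight spaces of $\bu\big((\lie n^-_{\boa^m,N})^\sigma\big)$ are finite-dimensional because no root of $\lie g$ vanishes on $\lie h_0$, so every PBW factor strictly lowers the weight — and you give a more detailed propagation argument for local nilpotency, whereas the paper simply asserts it; neither adds anything essentially new, and your worry about the node $n$ in type $A_{2n}$ is unfounded since the triple $\{x^\pm_{n,0},h_{n,0}\}$ sits inside $\lie g_0$ and the relevant defining relation $(x^-_{n,0})^{\lambda_\bpi(h_n)+1}w_{\bpi^\sigma}=0$ is imposed for all $i\in I_0$.
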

\begin{pf} Let $u\in W(\bpi^\sigma)$ and write
$u=yw_{\bpi^\sigma}$ for some $y\in \bu(L^\sigma(\lie n^-))$. The
adjoint action of the subalgebras $\lie n^\pm_0$ on $L^\sigma(\lie
g)$ and hence on $\bu(L^\sigma(\lie g))$ is nilpotent. Using the
defining relations we get immediately that for some $r=r(u)>0$, we
have
$$(x_\alpha^\pm\otimes 1)^r u=0,\ \ \forall\ \ \alpha\in R_0^+.$$
This implies that $\bu(\lie g_0)u$ is a finite--dimensional $\lie
g_0$--submodule of $W(\bpi^\sigma)$, and hence $W(\bpi^\sigma)$ is
isomorphic to a direct sum of $\lie g_0$--modules. Write,
$$W(\bpi^\sigma)=\bigoplus_{\eta\in
Q_0^+}W(\bpi^\sigma)_{\mu},$$ where $W(\bpi^\sigma)_{\mu}=\{u\in
W(\bpi^\sigma): h u=\mu(h)u,\ \ \forall\ \ h\in\lie h_0\}.$ The
representation theory of $\lie g_0$ now  implies that
$$W(\bpi^\sigma)_{\mu}\ne 0\iff
W(\bpi^\sigma)_{w(\mu)}\ne 0,\ \ \forall\  w\in W_0.$$ Since
$W(\bpi^\sigma)_{\mu}=0$ unless $\mu\in\lambda-Q_0^+$ and the
number of elements in $P_0^+$ with this property is finite we get
that $W(\bpi^\sigma)_{\nu}=0,$ for all but finitely many $\nu\in
P_0^+$. The proposition follows if we prove that
$\dim(W(\bpi^\sigma)_\nu)<\infty$ for all $\nu\in P_0^+$.

 Choose $\boa$ and $N$ as in
Corollary  \ref{annmulti}. Then  $$W(\bpi^\sigma)_\nu =\bu((\lie
n^-_{\boa,N})^\sigma)_{\lambda_\bpi-\nu}w_{\bpi^\sigma}$$ where
$$\bu((\lie
n^-_{\boa,N})^\sigma)_{\lambda_\bpi-\nu}=\{y\in\bu((\lie
n^-_{\boa,N})^\sigma)_{\lambda_\bpi-\nu}: [h,y]=(\lambda_\bpi -
\nu)(h)y,\ \ \forall \ h\in\lie h_0\}.$$ Since this subspace is
finite--dimensional  it follows that
$\dim(W(\bpi^\sigma)_\nu)<\infty$ as required.
\end{pf}
\subsection{} Let $N\in\bz_+$ and $\boa\in(\bc^\times)^\ell$.
 The inclusion $\iota:
L^\sigma(\lie g)\to L(\lie g)$ obviously induces a Lie algebra map
$\iota_{\boa,N}:\lie g^\sigma_{\boa^m,N}\to\lie g_{\boa,N}$, where
$\boa^m=(a_1^m,\cdots ,a_\ell^m)$. The following proposition will
play  a crucial role in the proof of Theorem \ref{twisted}.
\begin{prop}\label{iso} Let $\boa\in(\bc^\times)^\ell$ be such that
$\boa$ and $\boa^m$ have distinct coordinates. For all $N\in\bz_+$
we have an isomorphism of Lie algebras,
 $$\lie g_{\boa,N}\cong \bigoplus\lie
g^{\sigma}_{a_i^m,N}\cong\lie g^\sigma_{\boa^m, N}$$ for all $N\in\bz_+$. In particular, the composite map $L^\sigma(\lie g)\to L(\lie g)\to
\lie g_{\boa,N}$ is  surjective.
\end{prop}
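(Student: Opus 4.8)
The plan is to prove the isomorphism $\lie g_{\boa,N}\cong\lie g^\sigma_{\boa^m,N}$ by a localization/Chinese-remainder argument, reducing to the single-point case, and there to construct an explicit isomorphism using the structure of $L^\sigma(\lie g)$. First, by \lemref{chinun} the left-hand side decomposes as $\lie g_{\boa,N}\cong\bigoplus_{k=1}^\ell\lie g_{a_k,N}$ since the $a_k$ are distinct; similarly, since the $a_k^m$ are distinct, the same argument applied to $\bc[t^m,t^{-m}]$ shows that the image of $L^\sigma(\lie g)$ modulo $\oplus_\epsilon(\lie g_\epsilon\otimes t^\epsilon\prod_k(t^m-a_k^m)^N\bc[t^m,t^{-m}])$, i.e.\ $\lie g^\sigma_{\boa^m,N}$, decomposes as $\bigoplus_{k=1}^\ell\lie g^\sigma_{a_k^m,N}$. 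Thus it suffices to handle $\ell=1$: show $\lie g_{a,N}\cong\lie g^\sigma_{a^m,N}$ for a single $a\in\bc^\times$ (writing $\lie g^\sigma_{b,N}$ for the quotient in \er{gsan} with a single parameter $b=a^m$), compatibly with the map $\iota_{\boa,N}$.

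For the single-point case, the key observation is that in $\bc[t,t^{-1}]/(t-a)^{mN}\bc[t,t^{-1}]$ the element $t^m - a^m$ is, up to a unit, $(t-a)$ times a unit (since $t^m-a^m=(t-a)(t^{m-1}+\cdots+a^{m-1})$ and the second factor is invertible modulo $(t-a)$, as it evaluates to $ma^{m-1}\ne 0$ at $t=a$). Hence $(t^m-a^m)^N$ generates the same ideal as $(t-a)^N$ in $\bc[t,t^{-1}]/(t-a)^{mN}$, and so the quotient $L(\lie g)/\bigl(\lie g\otimes(t-a)^N\bc[t,t^{-1}]\bigr)$ — which is $\lie g_{a,N}$ — can be computed starting from $L(\lie g)/\bigl(\lie g\otimes(t-a)^{mN}\bc[t,t^{-1}]\bigr)$ by further quotienting by $\lie g\otimes(t^m-a^m)^N$. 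Now I would restrict attention to the subalgebra $L^\sigma(\lie g)$: its image in $L(\lie g)/\bigl(\lie g\otimes(t^m-a^m)^{mN}\bc[t^m,t^{-m}]\bigr)$ is exactly $\lie g^\sigma_{a^m,N}$ after modding out the relevant ideal. The crucial point is that $\iota$ induces a \emph{surjection} $\lie g^\sigma_{a^m,N}\to\lie g_{a,N}$; to see surjectivity, note that for each $\epsilon$ the subspace $\lie g_\epsilon\otimes t^{m-\epsilon}\bc[t^m,t^{-m}]$ projects onto all of $\lie g_\epsilon\otimes t^{m-\epsilon}$ in $\lie g_{a,N}$ modulo higher powers of $(t-a)$, and since $t^{m-\epsilon}$ is a unit modulo $(t-a)$, together these span $\lie g\otimes\bigl(\bc[t,t^{-1}]/(t-a)^N\bigr)=\lie g_{a,N}$. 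A dimension count then forces this surjection to be an isomorphism: $\dim\lie g_{a,N}=N\dim\lie g$, while $\dim\lie g^\sigma_{a^m,N}=\sum_{\epsilon=0}^{m-1}N\dim\lie g_\epsilon=N\dim\lie g$, since $\lie g=\bigoplus_\epsilon\lie g_\epsilon$.

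Assembling the pieces, the composite $L^\sigma(\lie g)\to L(\lie g)\to\lie g_{\boa,N}$ factors through $\lie g^\sigma_{\boa^m,N}$ by \propref{annmulti} (the kernel contains the defining ideal of $\lie g^\sigma_{\boa^m,N}$), and the induced map $\lie g^\sigma_{\boa^m,N}\to\lie g_{\boa,N}$ is the direct sum over $k$ of the single-point isomorphisms $\lie g^\sigma_{a_k^m,N}\to\lie g_{a_k,N}$, hence an isomorphism; in particular the composite $L^\sigma(\lie g)\to\lie g_{\boa,N}$ is surjective. The main obstacle I anticipate is bookkeeping in the single-point case: one must check carefully that the grading shift $t^{m-\epsilon}$ together with the passage from $(t^m-a^m)^N$ to $(t-a)^N$ really matches the filtration by powers of $(t-a)$ on the nose, and that $\iota_{\boa,N}$ is a Lie algebra homomorphism for the bracket induced from $L(\lie g)$ (this is automatic, but one should confirm the target bracket on $\lie g_{a,N}$ is compatible). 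Once surjectivity and the equality of dimensions are in hand, the isomorphism is forced, so the only real work is the surjectivity argument and the reduction via the Chinese remainder theorem.
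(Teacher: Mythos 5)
Your proof is correct and follows the same overall strategy as the paper: reduce to $\ell=1$ by a Chinese-remainder argument (both directions use the obvious modification of \lemref{chinun}), establish one inequality for the map $\iota_{a,N}$, and then close with the dimension count $\dim\lie g^\sigma_{a^m,N}=N\dim\lie g=\dim\lie g_{a,N}$. The one genuine difference is which one-sided property you verify in the single-point case. The paper's proof shows \emph{injectivity} of $\iota_{a,N}$, and it does so in a single line: for $f=t^\epsilon g$ with $g\in\bc[t^m,t^{-m}]$, one has $f\in(t-a)^N\bc[t,t^{-1}]$ if and only if $f\in t^\epsilon(t^m-a^m)^N\bc[t^m,t^{-m}]$, essentially by comparing orders of vanishing at $t=a$ (the map $s\mapsto t^m$ sends the uniformizer $s-a^m$ to $(t-a)\cdot(\text{unit})$). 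You instead prove \emph{surjectivity} directly, by observing that $(t^m-a^m)$ equals $(t-a)$ times a unit in $\bc[t,t^{-1}]/(t-a)^N$ and that $t^{m-\epsilon}$ is a unit there, so the images of the $\lie g_\epsilon\otimes t^{m-\epsilon}\bc[t^m,t^{-m}]$ span. Both routes are valid; the paper's is a bit shorter, while yours has the mild advantage of confirming the ``in particular, surjective'' clause of the statement without appealing to the dimension count (though you still need the dimension count to get injectivity, hence the full isomorphism). One small inaccuracy in your write-up: you cite \propref{annmulti} for the fact that $L^\sigma(\lie g)\to\lie g_{\boa,N}$ factors through $\lie g^\sigma_{\boa^m,N}$, but that proposition concerns the Weyl module $W(\bpi^\sigma)$, not this Lie algebra map. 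What you actually need is just the trivial direction of the ideal-membership fact above, namely that $\prod_k(t-a_k)^N$ divides $\prod_k(t^m-a_k^m)^N$ in $\bc[t,t^{-1}]$; no appeal to \propref{annmulti} is required.
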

\begin{pf} The proof that $$\bigoplus\lie
g^{\sigma}_{a_i^m,N}\cong\lie g^\sigma_{\boa^m, N}$$ is an obvious modification  of the one given in Lemma \ref{chinun} which also shows now
that it is  sufficient to prove the proposition when $\ell=1$. For this, let $a\in\bc^\times$ and  $f =t^\epsilon g$ where
$g\in\bc[t^m,t^{-m}]$. Then, $$f\in (t-a)^N\bc[t,t^{-1}]\ \ \iff\ \ \ f\in t^{\epsilon}(t^m-a^m)^N\bc[t^m,t^{-m}],$$ which proves that
$\iota_{a,N}$ is injective. The proposition follows by noting that
$$\dim\lie g^\sigma_{a^m,N}=\dim\lie g_{a,N}=N\dim\lie g.$$

\end{pf}

\subsection{} We note  some elementary observations which we use without further comment.
Any $\lie g_{\boa, N}$--module  (resp. $\lie g_{\boa,N}^\sigma$) is obviously a $L(\lie g)$--module (resp. $L^\sigma(\lie g)$--module). Moreover
if  $\boa\in(\bc^\times)^\ell$ is such that $\boa$ and  $\boa^m$ have distinct coordinates then for all $N\in\bz_+$, any $\lie g_{\boa,
N}$--module $V$ is also a $\lie g_{\boa^m,N}$--module and we write it as $V_{\lie g^\sigma_{\boa^m,N}}$. Similarly if we start with a $\lie
g^\sigma_{\boa^m,N}$--module $V$ we get a $\lie g_{\boa,N}$--module which we write as $V_{\lie g_{\boa,N}}$. Note also that  if $V$ is an  $\lie
g_{\boa, N}$--module, then
\begin{equation}\label{inv}  (V_{\lie g^\sigma_{\boa^m,N}})_{_{\lie
g_{\boa,N}}}\cong_{\lie g_{\boa,N}} V, \qquad  (V_{\lie g^\sigma_{\boa^m,N}})_{_{ \lie g_{\boa,N}}}\cong_{L(\lie g)} V.\end{equation}
\subsection{} \begin{lem}\label{wsigiso} Let $\bpi^\sigma\in\cal P^+_\sigma$,
and assume that $\bpi\in\boi(\bpi^\sigma)$. \begin{enumerit} \item  There exists $\ell, N\in\bz_+$ and $\boa\in(\bc^\times)^\ell$  with  $\boa$
and $\boa^m$ having  distinct coordinates such  that $W(\bpi)$ and $W(\bpi^\sigma)$ are modules for both  $\lie g_{\boa,N}$ and $\lie
g_{\boa^m,N}$.
\item In particular,
$$W(\bpi)_{\lie g^\sigma_{\boa^m,N}}=\bu(\lie g^\sigma_{\boa^m,N})w_\bpi,$$ and
  $V(\bpi)_{\lie g^\sigma_{\boa^m,N}}$ is an irreducible $\lie g^\sigma_{\boa^m,N}$--module.
\end{enumerit}
\end{lem}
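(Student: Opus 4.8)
The plan is to combine the annihilator results (Proposition~\ref{annuntwist}, Proposition~\ref{annmulti}) with the Lie algebra isomorphism of Proposition~\ref{iso}. For part (i), start with $\bpi^\sigma\in\cal P^+_\sigma$ and the chosen $\bpi\in\boi(\bpi^\sigma)$ of the form in Lemma~\ref{boi decomp}(iii); write $\bpi=\prod_{k=1}^\ell\bpi_{\mu_k,a_k}$ with $\boa=(a_1,\dots,a_\ell)$ having distinct coordinates, so that by construction of $\bor$ the element $\bpi^\sigma$ equals $\prod_{k=1}^\ell\bpi^\sigma_{\lambda_{k,\ep},\zeta^\ep a_k}$ for suitable $\lambda_{k,\ep}$, and $\boa^m=(a_1^m,\dots,a_\ell^m)$ also has distinct coordinates (this is exactly what ``standard decomposition'' ensures). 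First I would apply Proposition~\ref{annuntwist} to get $N_1$ with $(\lie g\otimes\prod_r(t-a_r)^{N_1}\bc[t,t^{-1}])W(\bpi)=0$, so $W(\bpi)$ is a module for $\lie g_{\boa,N_1}$. Next apply Proposition~\ref{annmulti} to $\bpi^\sigma$ (with its standard decomposition) to get $N_2$ with $\bigl(\bigoplus_\ep(\lie g_\ep\otimes t^{m-\ep}\prod_r(t^m-a_r^m)^{N_2}\bc[t^m,t^{-m}])\bigr)W(\bpi^\sigma)=0$, so $W(\bpi^\sigma)$ is a module for $\lie g^\sigma_{\boa^m,N_2}$. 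Setting $N=\max(N_1,N_2)$ and enlarging the annihilating ideals accordingly, both modules are modules for $\lie g_{\boa,N}$ and for $\lie g^\sigma_{\boa^m,N}$ respectively; but by Proposition~\ref{iso} the surjection $L^\sigma(\lie g)\to\lie g_{\boa,N}$ means any $\lie g_{\boa,N}$-module is canonically an $L^\sigma(\lie g)$-module, and indeed a $\lie g^\sigma_{\boa^m,N}$-module via the isomorphism $\lie g_{\boa,N}\cong\lie g^\sigma_{\boa^m,N}$, and symmetrically any $\lie g^\sigma_{\boa^m,N}$-module is a $\lie g_{\boa,N}$-module. Hence both $W(\bpi)$ and $W(\bpi^\sigma)$ are modules for both algebras, which is (i).

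For part (ii), the statement $W(\bpi)_{\lie g^\sigma_{\boa^m,N}}=\bu(\lie g^\sigma_{\boa^m,N})w_\bpi$ follows because the Corollary after Proposition~\ref{annuntwist} gives $W(\bpi)=\bu(L(\lie n^-))w_\bpi$, hence (after factoring through the quotient) $W(\bpi)=\bu(\lie g_{\boa,N})w_\bpi$; but under the identification $\lie g_{\boa,N}\cong\lie g^\sigma_{\boa^m,N}$ of Proposition~\ref{iso} the two enveloping algebras coincide as subalgebras of $\mathrm{End}(W(\bpi))$, so $\bu(\lie g^\sigma_{\boa^m,N})w_\bpi=\bu(\lie g_{\boa,N})w_\bpi=W(\bpi)$. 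For the irreducibility of $V(\bpi)_{\lie g^\sigma_{\boa^m,N}}$: by definition $V(\bpi)$ is the unique irreducible quotient of $W(\bpi)$ as an $L(\lie g)$-module, equivalently as a $\lie g_{\boa,N}$-module since the action factors through. A $\lie g_{\boa,N}$-module is irreducible iff it is irreducible as a $\lie g^\sigma_{\boa^m,N}$-module, because the two algebras are literally the same algebra under the isomorphism $\iota_{\boa,N}$, so the lattices of submodules coincide; thus $V(\bpi)_{\lie g^\sigma_{\boa^m,N}}$ is irreducible.

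The main obstacle — really the only nontrivial point — is making sure the bookkeeping in part (i) is consistent: one must check that the same $\boa$ works for both annihilator statements, i.e. that the distinct-coordinates hypothesis on $\boa$ (needed for Lemma~\ref{chinun} and Proposition~\ref{iso}) is compatible with the distinct-coordinates hypothesis on $\boa^m$ (needed so that the standard decomposition of $\bpi^\sigma$ has the right form and Proposition~\ref{annmulti} applies with those $a_r^m$). This is precisely guaranteed by choosing $\bpi\in\boi(\bpi^\sigma)$ in the form of Lemma~\ref{boi decomp}(iii), where $a_i^m\neq a_j^m$ for $i\neq j$ is part of the statement and forces the $a_i$ themselves to be distinct. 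Once that compatibility is in hand, everything else is a direct invocation of Proposition~\ref{iso} and the enveloping-algebra identification, with no calculation required. I would also remark, as the excerpt already does in \eqref{inv}, that passing back and forth between the $\lie g_{\boa,N}$ and $\lie g^\sigma_{\boa^m,N}$ module structures is inverse to itself, which makes the phrase ``modules for both'' unambiguous.
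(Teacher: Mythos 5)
Your proof is correct and follows essentially the same route as the paper: apply Proposition~\ref{annuntwist} and Proposition~\ref{annmulti} to get the two annihilating ideals, then use the isomorphism $\lie g^\sigma_{\boa^m,N}\cong\lie g_{\boa,N}$ of Proposition~\ref{iso} to transfer module structures, enveloping-algebra cyclicity, and irreducibility between the two algebras. The only difference is that you spell out the $N=\max(N_1,N_2)$ step and the compatibility of the distinct-coordinates hypotheses, which the paper leaves implicit.
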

\begin{pf} Let $\bpi=\prod_{k=1}^\ell\bpi_{\lambda_k,a_k}$, where $\boa=(a_1,\cdots
,a_k)$ and $\boa^m$ have distinct coordinates.  Proposition
\ref{annuntwist} implies that $W(\bpi)=\bu(\lie g_{\boa,N})w_\bpi$.
Using proposition \ref{iso} we see that $W(\bpi)$ is also module for
$\lie g^\sigma_{\boa^m,N}$ and so we get $$W(\bpi)_{\lie
g^\sigma_{\boa^m,N}}=\bu(\lie g^\sigma_{\boa^m,N})w_\bpi.$$
Similarly Proposition \ref{annmulti} implies that $W(\bpi^\sigma)$
is a module for $\lie g^\sigma_{\boa^m,N}$ and hence for $\lie
g_{\boa,N}$. Since $V(\bpi)$ is an irreducible module for $\lie
g_{\boa, N}$, it follows that it is also irreducible as a ${\lie
g^\sigma_{\boa^m,N}}$--module and the proposition is proved.
\end{pf}

The following proposition proves (i) of Theorem \ref{twisted}.

\begin{prop} \label{quotients}  Let $\bpi^\sigma\in\cal P^+_\sigma$,
$\bpi\in\boi(\bpi^\sigma)$.

\begin{enumerit}
\item Regarded as $L^\sigma(\lie g)$--module
 $W(\bpi) $ is a
 quotient of $W(\bpi^\sigma)$ and hence $$V(\bpi)\cong_{L^\sigma(\lie g)} V(\bpi^\sigma).$$
\item For $N\gg0$, the $L^\sigma(\lie g)$--module structure on $W(\bpi^\sigma
)$ (resp. $V(\bpi^\sigma )$)   extends to an $L(\lie g)$--module
action on $W(\bpi^\sigma )$  (resp. $V(\bpi^\sigma )$).
\item The module $W(\bpi^\sigma)_{\lie g_{\boa,N}}$ is a $L(\lie
g)$--module quotient of $W(\bpi)$.

\end{enumerit}
\end{prop}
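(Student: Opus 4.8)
The plan is to establish the three parts in order, using the universal (``generated by a highest-weight vector'') property of Weyl modules together with the identities in $\bu(L^\sigma(\lie g))$ and the structural results of Propositions \ref{annuntwist}, \ref{annmulti}, \ref{iso} and Lemma \ref{wsigiso}. For part (i), I would check that $w_\bpi$ satisfies, as an element of $W(\bpi)$ viewed as an $L^\sigma(\lie g)$-module, the defining relations of $W(\bpi^\sigma)$: namely $L^\sigma(\lie n^+)w_\bpi=0$, $hw_\bpi=\lambda_{\bpi^\sigma}(h)w_\bpi$ for $h\in\lie h_0$, $(x^-_{i,0})^{\lambda_{\bpi^\sigma}(h_i)+1}w_\bpi=0$, and $(\bop^\pm_{i,\sigma}(u)-\pi^\pm_{i,\sigma}(u))w_\bpi=0$ for $i\in I_0$. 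The first three are immediate from the defining relations of $W(\bpi)$ (the $L(\lie n^+)$-relation restricts, the weight matches by the identification of $\lambda_{\bpi^\sigma}$ with an element of $P^+$, and the $\lie{sl}_2$-relation for $x^-_{i,0}$ follows since $x^-_{i,0}\in\lie g_0$ acts as a lowest-weight-type element). The last relation is the only computation: using the description of $\bor$ in Lemma \ref{boi decomp}(iii), one expands $\bop^\pm_{i,\sigma}(u)$ in terms of the $h_{i,\ep}\otimes t^{mk-\ep}$, uses the eigenvalue formula for these on $w_\bpi$ coming from the $L(\lie h)$-relation of $W(\bpi)$ (i.e. $(h_i\otimes t^r)w_\bpi=\sum_j\lambda_j(h_i)a_j^r\,w_\bpi$), and matches it with $\pi^\pm_{i,\sigma}(u)$ via \eqref{equiv1} or \eqref{equiv2}. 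This is a bookkeeping check that the root-of-unity shifts $\zeta^{m-r+\ep}a_k$ built into $\boi(\bpi^\sigma)$ are exactly what makes the $m$-fold product collapse correctly; the type $A_{2n}$ case requires separate handling of $i=n$ using the $h_{n,0}, h_{n,1}$ decomposition. Once this is done, the universal property of $W(\bpi^\sigma)$ gives a surjection $W(\bpi^\sigma)\twoheadrightarrow W(\bpi)$ of $L^\sigma(\lie g)$-modules; composing with $W(\bpi)\twoheadrightarrow V(\bpi)$ and using that $V(\bpi^\sigma)$ is the unique irreducible quotient of $W(\bpi^\sigma)$ (Lemma \ref{standlemtwisted}(i)) forces $V(\bpi)\cong_{L^\sigma(\lie g)} V(\bpi^\sigma)$, provided one also knows $V(\bpi)$ is irreducible as an $L^\sigma(\lie g)$-module — which is exactly Lemma \ref{wsigiso}(ii).

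For part (ii), I would invoke Proposition \ref{annmulti}: there is $N$ with $\bigl(\bigoplus_\ep \lie g_\ep\otimes t^{m-\ep}\prod_r(t^m-a_r^m)^N\bc[t^m,t^{-m}]\bigr)W(\bpi^\sigma)=0$, so the $L^\sigma(\lie g)$-action factors through $\lie g^\sigma_{\boa^m,N}$. By Proposition \ref{iso}, $\lie g^\sigma_{\boa^m,N}\cong\lie g_{\boa,N}$ as Lie algebras, and any $\lie g_{\boa,N}$-module is canonically an $L(\lie g)$-module; so transporting the action along this isomorphism endows $W(\bpi^\sigma)$ (and its irreducible quotient $V(\bpi^\sigma)$) with an $L(\lie g)$-module structure that restricts to the original $L^\sigma(\lie g)$-action, using the compatibility recorded in \eqref{inv}. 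I would state this as: $W(\bpi^\sigma)_{\lie g_{\boa,N}}$ is a well-defined $L(\lie g)$-module.

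For part (iii), with the $L(\lie g)$-structure on $W(\bpi^\sigma)_{\lie g_{\boa,N}}$ in hand, I would check that $w_{\bpi^\sigma}$ satisfies the defining relations of $W(\bpi)$ as an $L(\lie g)$-module. The key point is that $W(\bpi^\sigma)_{\lie g_{\boa,N}}=\bu(L(\lie g))w_{\bpi^\sigma}$ is generated by $w_{\bpi^\sigma}$ with $L(\lie n^+)w_{\bpi^\sigma}=0$ and $L(\lie h)w_{\bpi^\sigma}=\bc w_{\bpi^\sigma}$ — the $\lie n^+$-annihilation and the $\lie h$-eigenvector property over all of $L(\lie g)$, not just $L^\sigma(\lie g)$, must be verified; this uses that the $L^\sigma(\lie g)$-relations together with the factoring through $\lie g^\sigma_{\boa^m,N}\cong\lie g_{\boa,N}$ already determine the action of the ``extra'' elements $\lie g_\ep\otimes t^r$ with $r\not\equiv m-\ep$, and one reads off that they still kill $w_{\bpi^\sigma}$ or scale it. Then Theorem \ref{affine}(ii) produces some $\bpi'\in\cal P^+$ and a surjection $W(\bpi')\twoheadrightarrow W(\bpi^\sigma)_{\lie g_{\boa,N}}$; to identify $\bpi'$ with $\bpi$ one compares the $\bop^\pm_i(u)$-eigenvalues on $w_{\bpi^\sigma}$ (computed from the $h_i\otimes t^r$-action, which is determined by the $L^\sigma$-data via the isomorphism of Proposition \ref{iso}) against the definition of $\bor$ and the explicit form of $\bpi\in\boi(\bpi^\sigma)$ from Lemma \ref{boi decomp}(iii). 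The main obstacle I anticipate is part (iii): making rigorous the claim that the $L^\sigma(\lie g)$-action plus the finiteness from Proposition \ref{annmulti} pins down the full $L(\lie g)$-action in a way that recovers precisely the polynomial data $\bpi$ — in particular matching the eigenvalues of $h_i\otimes t^r$ for the ``wrong-grade'' exponents $r$ and confirming they correspond to $\bpi_{\sigma^\ep(\om_i),a_k}$ rather than some other choice in $\boi(\bpi^\sigma)$. The type $A_{2n}$ normalizations (the factors $1-\tfrac12\delta_{i,n}$ and the $\sqrt 2$'s) will need to be tracked carefully throughout.
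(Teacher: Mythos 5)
Your plan for parts (i) and (ii) follows the paper, with one imprecision in (i) that hides the real work. You write that once $w_\bpi$ satisfies the defining relations, ``the universal property of $W(\bpi^\sigma)$ gives a surjection $W(\bpi^\sigma)\twoheadrightarrow W(\bpi)$.'' This is too strong: the universal property gives only a map $W(\bpi^\sigma)\to W(\bpi)$, with image $\bu(L^\sigma(\lie g))w_\bpi$. The non-trivial point is precisely that $W(\bpi)=\bu(L^\sigma(\lie g))w_\bpi$, i.e.\ that the untwisted Weyl module is already cyclic over the twisted loop algebra. That is the first assertion of Lemma \ref{wsigiso}(ii) (combined with Proposition \ref{iso}, so that $\bu(\lie g^\sigma_{\boa^m,N})w_\bpi=\bu(L^\sigma(\lie g))w_\bpi$); you invoke Lemma \ref{wsigiso}(ii) only for the irreducibility of $V(\bpi)$, so the cyclicity is left unaccounted for in your sketch even though the right lemma is on the table.

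For part (iii) you propose to verify the defining relations of $W(\bpi)$ on $w_{\bpi^\sigma}$ directly and then pin down $\tilde\bpi$ by matching the $\bop^\pm_i(u)$-eigenvalues, and you correctly anticipate that this matching for the ``wrong-grade'' exponents $r$ would be the main obstacle. The paper avoids this computation entirely. After Theorem \ref{affine}(ii) produces \emph{some} $\tilde\bpi\in\cal P^+$ with $W(\tilde\bpi)\twoheadrightarrow W(\bpi^\sigma)_{\lie g_{\boa,N}}$, one observes that the unique irreducible quotient of $W(\bpi^\sigma)_{\lie g_{\boa,N}}$ is $V(\bpi^\sigma)_{\lie g_{\boa,N}}$, which by part (i) together with \eqref{inv} is isomorphic to $V(\bpi)$ as an $L(\lie g)$-module. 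Since the irreducible quotient of a Weyl module determines its Drinfeld polynomial, this forces $\tilde\bpi=\bpi$ with no eigenvalue bookkeeping at all. This is a genuinely shorter route than what you propose, and it removes exactly the step you flag as an obstacle; I would recommend using it. (The normalization subtleties you mention for type $A_{2n}$ are thereby confined to part (i), where they are already handled by \eqref{equiv2} and the description of $\boi(\bpi^\sigma)$ in Lemma \ref{boi decomp}.)
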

\begin{pf} Using
 \eqref{equiv1}, \eqref{equiv2} and the fact that $\bor(\bpi)=\bpi^\sigma$, we  see that $w_\bpi$ satisfies the
defining relations of $W(\bpi^\sigma)$. Part (i) follows if we prove
that $W(\bpi)=\bu(L^\sigma(\lie g))w_\bpi.$ But this is true because
proposition \ref{iso} and proposition \ref{wsigiso} prove that there
exists $\boa\in(\bc^\times)^\ell$ such that
$$W(\bpi)_{\lie g^\sigma_{\boa^m,N}}=\bu(\lie g^\sigma_{\boa^m,N})w_\bpi=\bu( L^\sigma(\lie
g))w_\bpi.$$ It now follows that $V(\bpi)_{\lie g_{\boa^m,N}}$ is
the irreducible quotient of $W(\bpi^\sigma)$ and hence is isomorphic
to $V(\bpi^\sigma)$ as $L^\sigma(\lie g)$--modules.

To prove (ii), note that  that we have  a surjective homomorphism
of Lie algebras $$\bop: L(\lie g)\to\lie g_{\boa,N}\to\lie
g^\sigma_{\boa^m,N}, $$ such that the restriction of $\bop$ to
$L^\sigma(\lie g)$ is just the canonical surjection.
 Moreover
$$\bop(L(\lie n^\pm))\subset(\lie n^\pm)^\sigma_{\boa^m,N},\ \
\bop(L(\lie h))\subset\lie h^\sigma_{\boa^m,N},$$ and hence
$$L(\lie n^+)w_{\bpi^\sigma}=0,\ \ L(\lie h)w_{\bpi^\sigma}=\bc
w_{\bpi^\sigma}.$$ Since $\dim(W(\bpi^\sigma))<\infty$, it follows
from Theorem \ref{affine}(i) that $W(\bpi^\sigma)_{\lie
g_{\boa,N}}$ is a quotient of $W(\tilde\bpi)$ for  some
$\tilde\bpi\in\cal P^+$. Since the module $W(\tilde\bpi)$ has an
unique irreducible quotient $V(\tilde\bpi)$, part (iii) follows if
we prove that
$$V(\bpi)\cong_{L(\lie g)} V(\bpi^\sigma)_{\lie g_{\boa,N}}.$$ But
this follows from part (i) and \eqref{inv} and part (iii) is now
proved.
\end{pf}

\subsection{}
The next proposition proves part (ii) of Theorem \ref{twisted}.
\begin{prop}
Let $\bpi^\sigma=
\prod_{k=1}^\ell\prod_{\epsilon=0}^{m-1}\bpi^\sigma_{\lambda_{k,\epsilon},
\zeta^\epsilon a_k}\in \cal P^+_\sigma$ and assume that $\boa$ and
$\boa^m$ have distinct coordinates. As $L^{\sigma}(\lie
g)$--modules, we have
$$
W(\bpi^{\sigma}) \cong \bigotimes_{k=1}^{\ell}
W\left(\prod_{\epsilon=0}^{m-1}\bpi^\sigma_{\lambda_{k,\epsilon},
\zeta^\epsilon a_k}\right).
$$
\end{prop}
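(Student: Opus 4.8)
The plan is to reduce the statement to the corresponding tensor-product decomposition in the untwisted case, Theorem~\ref{affine}(i), by passing through the finite-dimensional quotient algebras $\lie g_{\boa,N}\cong\lie g^\sigma_{\boa^m,N}$ of Proposition~\ref{iso}.

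First I would choose $\bpi\in\boi(\bpi^\sigma)$ of the form in Lemma~\ref{boi decomp}(iii), so that $\bpi=\prod_{k=1}^\ell\bpi_{\mu_k,a_k}$ with $\mu_k=\sum_{\ep}\sum_{i\in I_0}m_{k,\ep,i}\sigma^\ep(\om_i)$ and the $a_k^m$ distinct; here $\bpi_{\mu_k,a_k}$ is precisely the element of $\boi(\prod_{\ep}\bpi^\sigma_{\lambda_{k,\ep},\zeta^\ep a_k})$ given by the same recipe. By Proposition~\ref{quotients}(i) we have $W(\bpi^\sigma)\cong_{L^\sigma(\lie g)}W(\bpi)$ and likewise $W(\prod_\ep\bpi^\sigma_{\lambda_{k,\ep},\zeta^\ep a_k})\cong_{L^\sigma(\lie g)}W(\bpi_{\mu_k,a_k})$ for each $k$. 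So it suffices to prove the $L^\sigma(\lie g)$-module isomorphism
$$W(\bpi)\cong\bigotimes_{k=1}^\ell W(\bpi_{\mu_k,a_k}).$$
Next, by Lemma~\ref{wsigiso} there is an $N$ such that all the modules $W(\bpi)$ and $W(\bpi_{\mu_k,a_k})$ are modules for $\lie g_{\boa,N}$, where $\boa=(a_1,\dots,a_\ell)$; by Theorem~\ref{affine}(i) we already have $W(\bpi)\cong\bigotimes_k W(\bpi_{\mu_k,a_k})$ as $L(\lie g)$-modules, hence as $\lie g_{\boa,N}$-modules. Now apply the restriction functor $V\mapsto V_{\lie g^\sigma_{\boa^m,N}}$ along the isomorphism $\lie g^\sigma_{\boa^m,N}\cong\lie g_{\boa,N}$ of Proposition~\ref{iso}; since this functor is just restriction along an isomorphism of Lie algebras it is monoidal, so it carries the tensor-product decomposition over verbatim, giving $W(\bpi)_{\lie g^\sigma_{\boa^m,N}}\cong\bigotimes_k W(\bpi_{\mu_k,a_k})_{\lie g^\sigma_{\boa^m,N}}$ as $\lie g^\sigma_{\boa^m,N}$-modules, hence as $L^\sigma(\lie g)$-modules via the canonical surjection. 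Combined with Proposition~\ref{quotients}(i) this is exactly the claim.

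The one point that needs care — and which I expect to be the main obstacle — is the bookkeeping in the first step: one must check that the element $\bpi$ of $\boi(\bpi^\sigma)$ selected by Lemma~\ref{boi decomp}(iii) really does factor, with respect to the \emph{single} parameters $a_k$, as the product of the elements of $\boi(\prod_\ep\bpi^\sigma_{\lambda_{k,\ep},\zeta^\ep a_k})$ selected by the same recipe. This is immediate from the definition of $\bor$ and the explicit description in Lemma~\ref{boi decomp}, once one notes that $\bor\bigl(\prod_k\bpi_{\mu_k,a_k}\bigr)=\prod_k\prod_\ep\bpi^\sigma_{\lambda_{k,\ep},\zeta^\ep a_k}$ precisely because the $\mu_k$ split as $\sum_\ep\lambda_{k,\ep}(\ep)$ in the notation of the construction of $\bor$ and because $\lambda_{k,\ep}(\ep')=0$ for $\ep'\neq 0$ forces $\lambda_{k,\ep}$ to sit in the $\ep$-graded piece. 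Everything else is formal: finiteness of the modules (Proposition~\ref{findim}) is what lets us pass to the finite quotient algebras, and the monoidality of restriction along a Lie algebra isomorphism needs no proof.
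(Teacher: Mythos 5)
Your proposal is correct, but it organises the argument differently from the paper. The paper constructs the canonical map $\eta:W(\bpi^\sigma)\to\bigotimes_{k}W(\bpi_k^\sigma)$ directly, by checking that the vector $\otimes_k w_{\bpi^\sigma_k}$ satisfies the defining relations of $w_{\bpi^\sigma}$; it then proves surjectivity of $\eta$ by passing to the finite quotients and invoking the Chinese-remainder isomorphism $\lie g^\sigma_{\boa^m,N}\cong\bigoplus_k\lie g^\sigma_{\boa_k^m,N}$ (Proposition~\ref{iso}) to see that $\otimes_k w_{\bpi^\sigma_k}$ is cyclic, and finishes by a dimension count that uses part (i) of the theorem and Theorem~\ref{affine} to show $\dim W(\bpi^\sigma)=\prod_k\dim W(\bpi_k^\sigma)$. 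You instead bypass the construction of $\eta$ altogether: using part (i) to replace $W(\bpi^\sigma)$ by $W(\bpi)$ and each $W(\bpi_k^\sigma)$ by $W(\bpi_{\mu_k,a_k})$, you invoke the untwisted factorisation $W(\bpi)\cong\bigotimes_k W(\bpi_{\mu_k,a_k})$ of Theorem~\ref{affine}(i) and transport it across by restriction, which is monoidal. (In fact you do not even need the detour through $\lie g_{\boa,N}\cong\lie g^\sigma_{\boa^m,N}$ for this last step --- restriction along $L^\sigma(\lie g)\hookrightarrow L(\lie g)$ already commutes with tensor products.) Both routes rest on the same ingredients, namely part (i), the untwisted Theorem~\ref{affine}(i), and Proposition~\ref{iso}; what yours buys is the elimination of the surjectivity-plus-dimension-count argument, while what it loses is that the isomorphism you produce is not visibly the canonical highest-weight-preserving map $\eta$. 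Your worry about the bookkeeping in $\boi$ is legitimate but resolves as you say: for $i\in I_0$ fixed by $\sigma$ one has $\bpi^\sigma_{\om_i,\zeta^\ep a_k}=\bpi^\sigma_{\om_i,a_k}$ for all $\ep$, so the apparent mismatch between $\mu_k(\ep)$ and $\lambda_{k,\ep}$ at fixed nodes disappears in $\cal P^+_\sigma$, and $\bpi_{\mu_k,a_k}$ indeed lies in $\boi(\prod_\ep\bpi^\sigma_{\lambda_{k,\ep},\zeta^\ep a_k})$.
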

\begin{proof} For $1\le k\le \ell$, set $$\bpi^\sigma_k=\prod_{\epsilon=0}^{m-1}\bpi^\sigma_{\lambda_{k,\epsilon},
\zeta^\epsilon a_k}.$$ It is checked easily that the element
$\otimes_{k=1}^\ell{w_{\bpi^\sigma_k}}$ satisfies the defining
relations of $w_{\bpi^\sigma}$ and hence we have an $L^\sigma(\lie
g)$--module map, $$ \eta: W(\bpi^\sigma)\to \bigotimes_{k=1}^\ell
W(\bpi^\sigma_k).$$ The proposition follows if we prove that this
map is surjective. For then, taking  $\bpi\in \boi(\bpi^\sigma)$
and $\bpi_k\in\boi(\bpi_k^\sigma)$,  we have
$$\dim W(\bpi)=\dim W(\bpi^\sigma)\ge \prod_{k=1}^\ell\dim
W(\bpi^\sigma_k)=\prod_{k=1}^\ell\dim W(\bpi_k)=\dim W(\bpi),$$
where the first equality uses  part (i) of Theorem \ref{twisted}
and the  last equality follows from Theorem \ref{affine}(iii). To
prove that $\eta$ is surjective choose $N>>0$ so that
$W(\bpi^\sigma)$ is a module for $\lie g_{\boa^m,N}$ and also so
that for all $1\le k\le \ell$ the algebra $\lie
g^\sigma_{\boa_k^m,N}$ acts on $W(\bpi_k^\sigma)$ where
$\boa_k^m=\{a_k^m\}$ and we have
$$W(\bpi_k^\sigma)=\bu(\lie g^\sigma_{\boa_k^m,N})w_{\bpi^\sigma_k}.$$ On the other
hand by Proposition \ref{iso} we have
 $$  \lie
g^{\sigma}_{\boa^m,N} \cong \bigoplus_{k=1}^\ell\lie
g^{\sigma}_{\boa_k^m,N},$$ and hence $\otimes_{k=1}^\ell
W(\bpi_k^\sigma)$ is cyclic for $ \lie g^{\sigma}_{\boa^m,N}$, i.e
$$\bu(\lie g^{\sigma}_{\boa^m,N})(\otimes_{k=1}^\ell
w_{\bpi^\sigma_k})=\bigotimes_{k=1}^\ell W(\bpi^\sigma_k).$$ This
proves that  $\eta$ is  a surjective  map  of $\lie
g^\sigma_{\boa^m, N}$--modules and the proof of the proposition is
complete.

\end{proof}
\subsection{}

We now prove (iii) of Theorem \ref{twisted}. Recall that in
Section 2.1, we have identified elements of $P_\sigma^+$ with
elements of $P^+$ and hence for each $a\in\bc^\times$ and
$\lambda\in P^+_\sigma$ we have elements $\bpi_{\lambda,a}\in\cal
P^+$ and $\bpi_{\lambda,a}^\sigma\in\cal P^+_\sigma$. Moreover,
$\bpi_{\lambda,a}\in\boi(\bpi_{\lambda,a}^\sigma)$.
\begin{proof}
  Choose
$b_\epsilon\in\bc^\times$, $0\le \epsilon\le m-1$  such that
$$b_r\ne b_s,\ \ b_r^m \neq b_s^m,\ \  r\ne s.$$   Using Lemma \ref{standlemtwisted}, Theorem
\ref{twisted}(ii) and  Theorem \ref{twisted}(i) in that order
gives,
 $$
\bigotimes_{\epsilon=0}^{m-1} W(\bpi^\sigma_{\lambda_{\epsilon},
\zeta^\ep a}) \cong_{\lie g_0} \bigotimes_{\epsilon=0}^{m-1}
W(\bpi^\sigma_{\lambda_{\epsilon},
b_{\epsilon}})\cong_{L^{\sigma}(\lie g)} W(\prod
\bpi^\sigma_{\lambda_{\epsilon}, b_{\epsilon}}).$$ Since
$\lambda_\epsilon\in\cal P^+_\sigma$, we have $\prod
\bpi_{\sigma^{\epsilon}(\lambda_{\epsilon}), \zeta^{\epsilon}
b_{\epsilon}}  \in \boi(\prod \bpi^\sigma_{\lambda_{\epsilon},
b_{\epsilon}})$ and so by Theorem \ref{twisted}(i) we get,
$$
W(\prod
\bpi^\sigma_{\lambda_{\epsilon},
b_{\epsilon}})\cong_{L^{\sigma}(\lie g)} W(\prod
\bpi_{\sigma^{\epsilon}(\lambda_{\epsilon}), \zeta^{\epsilon}
b_{\epsilon}}).
$$
Theorem \ref{affine} gives, $$W(\prod
\bpi_{\sigma^{\epsilon}(\lambda_{\epsilon}), \zeta^{\epsilon}
b_{\epsilon}})\cong_{\lie g}W(\prod
\bpi_{\sigma^{\epsilon}(\lambda_{\epsilon}), 1})\cong_{\lie g}
W(\bpi_{\sum_{\epsilon=0}^{m-1}\sigma^{\epsilon}(\lambda_\epsilon),a}).$$
And  since
$\bpi_{\sum_{\epsilon=0}^{m-1}\sigma^{\epsilon}(\lambda_\epsilon),a}
\in \boi(\prod_{\ep = 0}^{m-1}\bpi^\sigma_{\lambda_\ep, \zeta^\ep a})$, we get
$$W(\prod
\bpi_{\sigma^{\epsilon}(\lambda_{\epsilon}), \zeta^{\epsilon}
b_{\epsilon}})\cong_{\lie
g_0}W(\bpi_{\lambda,a})\cong_{L^\sigma(\lie g)}
W(\prod\limits_{\epsilon = 0}^{m-1}
\bpi^\sigma_{\lambda_{\epsilon}, \zeta^\ep a}), $$ which completes
the proof.

\end{proof}
\subsection{} We now  prove Theorem \ref{twisted} (iv).
By Theorem \ref{twisted}(i), we have
$$W^\sigma(\bpi^{\sigma}_{\lambda,a}) \cong_{L^\sigma(\lie g)} W(\bpi_{\lambda,a}).$$   Theorem \ref{affine} gives if  $\lie g$ not of type $A_{2n}$
that $$ W(\bpi_{\lambda,a}) \cong_{\lie g} \bigotimes_{i=1}^{n}
W(\bpi_{\omega_i,1})^{\otimes m_i}$$ and for $\lie g$ of type
gives $A_{2n}$
$$W(\bpi_{\lambda,a})\cong_{\lie g} W(\bpi_{2\omega_n,1})^{\otimes \frac{m_n}{2}} \otimes \bigotimes_{i =1}^{n-1}
W(\bpi_{\omega_i,1})^{\otimes m_i},$$ which completes the proof.

\subsection{} We now prove Theorem \ref{twisted}(v). This part of
the proof is very similar to the one given in \cite{CPweyl} in the
untwisted case and we shall only give a sketch of the proof. Thus,
let $V$ be an $L^\sigma(\lie g)$--module, assume that $V$ is
finite--dimensional and that it is generated by an element $v\in
V$ such that $$L^\sigma(\lie n^+)v=0,\ \ \bu(L^\sigma(\lie
h))v=\bc v.$$ Let $\lambda\in P_\sigma^+$ be such that
$hv=\lambda(h)v$ for all $h\in\lie h_0$. Since $V$ is
finite--dimensional it follows from the representation theory of
the subalgebras $\{x^\pm_{i,0}, h_{i,0}\}$, $i\in I_0$  that
$\lambda\in P^+_\sigma$ and also that
\begin{equation}\label{xim} (x_{i,0}^-)^{s}=0,\ \, i\in I_0,\  \ s\in\bz_+,\ \  s\ge \lambda(h_i)+1\end{equation} Moreover if
$\lie g$ is of type $A_{2n}$, we find by working with the
subalgebra $\left\{ \frac{1}{2}h_{n,0}, y_{n,1}^\pm\otimes t^{\mp
1}\right\}$ that
\begin{equation}\label{a2n}(y^-_{n,1}\otimes t)^{s}v = 0,\ \ s\in\bz_+,\ \ s\ge \frac{1}{2}\lambda(h_{n,0}) +1. \end{equation}
Applying $(x_{i,0}^+\otimes t)^{s}$ to both sides of \eqref{xim},
($i\ne n$ if $\lie g$ of type $A_{2n}$) we find by using Lemma 3.3
(i), (ii),  that
$$\left(\bop^+_{i,\sigma}(u)\right)_{_{s}} = 0, \ \ \ s > \lambda(h_{i,0}),$$
while if $\lie g$ is of type $A_{2n}$, we  apply
$(x^+_{n,0})^{2s}$ to both sides of \eqref{a2n} and using Lemma
3.3(iii), we find
$$\left(\bop^+_{n,\sigma}(u)\right)_{_{k}} = 0, \ \ \ k >  \lambda(\frac{1}{2}h_{n,0}).$$
Set $$\pi_i^\sigma(u)=\sum_{k=0}^\infty(\bop_i^\sigma(u))_ku^k,$$
and let $\bpi^\sigma=(\pi_i^\sigma)_{i\in I_0}$. The preceding
arguments show that $\bpi^\sigma$ is an $I_0$--tuple of
polynomials. We claim  that
\begin{equation}\label{left} \lambda=\lambda_\bpi,\ \ \ \ \bop_{i,\sigma}^-(u)
v=(\pi_i^\sigma(u))^- v,\end{equation} which now shows that $V$ is
a quotient of $W(\bpi^\sigma)$. To prove that
$\lambda=\lambda_\bpi$ is equivalent to proving that
\begin{equation}\label{notan} \left(\bop^+_{i,\sigma}\right)_{\lambda(h_i)}v\ne 0,\end{equation}for all
$i\in I$, if  if $\lie g$ is not of type $A_{2n}$ and for all
$i\ne n$ if $\lie g$ is of type $A_{2n}$ and if $\lie g$ is of
type $A_{2n}$
\begin{equation}\label{an} \left(\bop^+_{n,\sigma}(u)\right)_{\frac{1}{2}\lambda(h_{n,0})}.v\ne
0.\end{equation} It is now easy to see (keeping in mind that
$(\bop_{i\sigma}(u))_{_{0}}=1$) that the following Lemma implies
\eqref{left}.

\begin{lem} Let $V$ be a finite--dimensional $L^\sigma(\lie
g)$--module and let $v\in V_\lambda)$ be such that $L^\sigma(\lie
n^+)v=0$.  For all $i \in I_0$ ($i \neq n$ for $\lie g$ of type
$A_{2n}$), we have\\
\begin{align*}
&(\bop^+_{i, \sigma}(u))_{\lambda(h_{i,0})}(\bop^-_{i,
\sigma}(u))_{k}.v = (\bop^+_{i,
\sigma}(u))_{\lambda(h_{i,0})-k}.v, \ \ \ 0 \leq k \leq
\lambda(h_{i,0}),
\end{align*}
and for $\lie g$ of type $A_{2n}$, we have
\begin{align*}
&(\bop^+_{n, \sigma}(u))_{\frac{1}{2}\lambda(h_{n,0})}(\bop^-_{n, \sigma}(u))_{k}.v =
 (\bop^+_{n, \sigma}(u))_{\frac{1}{2}\lambda(h_{n,0})-k}.v, \ \ \ 0 \leq k \leq
 \frac{1}{2}\lambda(h_{n,0}).
\end{align*}
\end{lem}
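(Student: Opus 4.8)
The plan is to establish the identity by reducing it to the corresponding untwisted statement, which is implicit in the proof of Proposition~\ref{annuntwist} and in \cite{CPweyl}. For each fixed $i\in I_0$ (with $i\neq n$ in the $A_{2n}$ case), the relevant operators $\bop^\pm_{i,\sigma}(u)$ together with the root vectors $x^\pm_{i,\epsilon}\otimes t^{mk-\epsilon}$ generate a copy of (a quotient of) $L(\lie{sl_2})$ inside $\bu(L^\sigma(\lie g))$, by Lemma~\ref{subalg}; in the $A_{2n}$, $i=n$ exceptional case one instead uses the copy of $L^\sigma(\lie{sl_3})$ and the elements $y^\pm_{n,1}$ as in Lemma~3.3(iii). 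So it suffices to prove: if $V$ is a module for this (sub)algebra on which the analogue of $L(\lie n^+)$ annihilates $v$ and $v$ has weight $\lambda$, then
\[
(\bop^+(u))_{N}(\bop^-(u))_{k}.v=(\bop^+(u))_{N-k}.v,\qquad 0\le k\le N,
\]
where $N=\lambda(h_{i,0})$ (resp. $\tfrac12\lambda(h_{n,0})$).

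\emph{Second}, I would carry out the $\lie{sl_2}$-type computation directly. The key input is Lemma~\ref{Garland} (in its twisted incarnation Lemma~3.3): it expresses $(\mathbf x^-_i(u)\bop^+_{i,\sigma}(u))_{r+1}$, for any $r$, as $\pm(x^+_{i,1}\otimes t)^{(r)}(x^-_{i,0}\otimes 1)^{(r+1)}$ modulo the left ideal generated by $\tilde{\mathbf x}^+_i(u)$. Since $\tilde{\mathbf x}^+_i(u).v=0$ (this is exactly the hypothesis $L^\sigma(\lie n^+)v=0$, because the positive-mode and negative-mode raising currents together span $\lie n^+\otimes\bc[t,t^{-1}]$ intersected with the fixed-point algebra — more precisely one checks the coefficients of $\tilde{\mathbf x}^+_i(u)$ all lie in $L^\sigma(\lie n^+)$), one gets $(\mathbf x^-_i(u)\bop^+_{i,\sigma}(u))_{r+1}.v=\pm(x^+_{i,1}\otimes t)^{(r)}(x^-_{i,0}\otimes 1)^{(r+1)}.v$. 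Now take $r=N$ so that $(x^-_{i,0})^{N+1}.v=0$ by \eqref{xim}: one concludes $\mathbf x^-_i(u).\bop^+_{i,\sigma}(u).v$ has no term of degree $>N$, i.e. the product of the power series $\bop^+_{i,\sigma}(u).v$ with $\mathbf x^-_i(u)$ truncates. Dually, applying $\bop^+_{i,\sigma}(u)$ on the left and using the fact (from the $q=1$ Serre/commutation relations, or directly from the $A_1^{(1)}$ computation in \cite{CPweyl}) that $\bop^+_{i,\sigma}(u)\bop^-_{i,\sigma}(u)\equiv 1$ modulo terms killed after pairing appropriately, one extracts the coefficient identity. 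In practice the cleanest route is: show $(\bop^+_{i,\sigma}(u))_{N}$ acts on $\bu(\lie h_0\text{-weight space of weight }\lambda)v$ as a projection-type operator, and read off the stated formula by comparing coefficients of $u^{N-k}$ — exactly the manipulation done for $W(\bpi)$ in \cite[\S1]{CPweyl}.

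\emph{Third}, for the $A_{2n}$, $i=n$ case, I would replace every use of Lemma~\ref{Garland} by Lemma~3.3(iii)(a),(b), which supply the two formulas
$(x^+_{n,0}\otimes 1)^{(2r-1)}(y^-_{n,1}\otimes t)^{(r)}\equiv -(\mathbf x^-_n(u)\bop^+_{n,\sigma}(u))_r$ and $(x^+_{n,0}\otimes 1)^{(2r)}(y^-_{n,1}\otimes t)^{(r)}\equiv -(\bop^+_{n,\sigma}(u))_r$, both modulo $\bu(L^\sigma(\lie g))\tilde{\mathbf x}^+_n(u)$. Feeding in \eqref{a2n} (the relation $(y^-_{n,1}\otimes t)^{s}v=0$ for $s>\tfrac12\lambda(h_{n,0})$) in place of \eqref{xim}, the same bookkeeping with $N=\tfrac12\lambda(h_{n,0})$ yields the second displayed identity; the parity split (a) versus (b) is exactly what makes $\bop^+_{n,\sigma}(u)$ behave like the $A_2^{(2)}$ object studied in \cite{FV}, \cite{Mitz}.

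\emph{The main obstacle} I anticipate is the same one that makes these "$\bop$-current" identities delicate in \cite{CPweyl}: keeping precise track of the left ideal generated by $\tilde{\mathbf x}^+_i(u)$ and verifying that \emph{all} its coefficients — including the negative $t$-modes — annihilate $v$ purely from $L^\sigma(\lie n^+)v=0$. One must be careful that in the twisted setting the grading shifts $t^{mk-\epsilon}$ and the $\epsilon$-sum in the definition of $\mathbf x^\pm_i(u)$ do not introduce modes outside $L^\sigma(\lie g)$, and in the $A_{2n}$ exceptional case that the elements $y^\pm_{n,1}$ genuinely generate the relevant $L^\sigma(\lie{sl_3})$-action with $v$ a highest-weight vector. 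Once that is pinned down the rest is the routine coefficient comparison, and I would simply cite \cite{CPweyl} for the formal manipulation rather than reproduce it.
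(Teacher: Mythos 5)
Your proposal takes essentially the same route as the paper, which disposes of this lemma in two sentences by citing \cite[Proposition 1.1]{CPweyl} and observing that the only new ingredient is the twisted Garland identity of Lemma~3.3 (parts (i),(ii) for the generic case, part (iii)(a) with $r=\tfrac12\lambda(h_{n,0})+1$ for $A_{2n}$, $i=n$). You have correctly identified that the argument lives entirely inside the $L(\lie{sl_2})$ (resp.\ $L^\sigma(\lie{sl_3})$) subalgebra supplied by Lemma~\ref{subalg}, that $\tilde{\mathbf x}^+_i(u)v=0$ is what lets one drop the left-ideal term, and that \eqref{xim} / \eqref{a2n} feed the divided-power vanishing into the Garland formula.

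Two small imprecisions worth flagging. First, in the $A_{2n}$, $i=n$ case the parameter should be $r=\tfrac12\lambda(h_{n,0})+1$, not $r=N=\tfrac12\lambda(h_{n,0})$: with the indexing of Lemma~3.3(iii)(a) one needs $(y^-_{n,1}\otimes t)^{(r)}v=0$, which forces $r\ge N+1$. Second, the remark that ``$\bop^+_{i,\sigma}(u)\bop^-_{i,\sigma}(u)\equiv 1$'' is not an accurate description of the mechanism: the two series involve disjoint $t$-modes and do not multiplicatively invert each other. The actual derivation in \cite[Prop.\ 1.1]{CPweyl} extracts the stated coefficient identity by applying the Garland formula both at $r=N$ and at $r=N+j$ and comparing, rather than by any inversion of $\bop$. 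Since you explicitly defer to \cite{CPweyl} for that bookkeeping, this is a presentational blemish rather than a gap, but stated as written it would mislead a reader about what the untwisted computation actually does.
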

\begin{pf} The proof of the  first statement is given in \cite[Proposition 1.1]{CPweyl}
and the key ingredient in that proof  is Lemma 3.4 (i). The proof
when $i=n$ and $\lie g$ of type $A_{2n}$  is entirely similar and
one uses Lemma 3.4 (iii)(a) with $r =
\frac{1}{2}\lambda(h_{n,0})+1$.

\end{pf}


\begin{thebibliography}{999999}
\bibitem{Akasaka} T.~Akasaka {\em An integral PBW basis of the quantum affine
algebra of type $A\sp {(2)}\sb 2$} Publ. Res. Inst. Math. Sci. 38,
no. 4 (2002), 803–894
\bibitem{BN} J.~Beck and H.~Nakajima {\em Crystal bases and two-sided cells of
quantum affine algebras}, Duke Math. J.  123, no. 2 (2004),
335–402


\bibitem{Cferm} V.~Chari {\em On the fermionic formula and the Kirillov-Reshetikhin conjecture},  Internat. Math. Res. Notices  2001,  no. 12, 629--654.
\bibitem{CL} V.~Chari and  S.~Loktev Weyl {\em Demazure and fusion modules for the current algebra of $\lie {sl}_{r+1}$},  Adv. Math.  207  (2006),  no. 2, 928--960.
\bibitem{CPtw} V.~Chari and  A.~Pressley {\em Twisted quantum affine
algebras}	Commun. Math. Phys. 196, no. 2, pp. 461-476 (1998).
\bibitem{CPweyl} V.~Chari and  A.~Pressley {\em Weyl modules for classical and quantum affine algebras}, Represent. Theory  5  (2001), 191--223.
\bibitem{FL} B.~Feigin and S.~Loktev {\em On Generalized Kostka Polynomials and
the Quantum Verlinde Rule}, Differential topology,
infinite-dimensional Lie algebras, and applications, Amer. Math.
Soc. Transl. Ser. 2 Vol. \textbf{194} (1999), p.61-79.
\bibitem{FKL}
B.~Feigin, A.~N.~Kirillov and S.~Loktev {\em Combinatorics and
geometry of higher level Weyl modules}, math.QA/0503315.
\bibitem{FV} T.Fisher-Vasta {\em Presentations of Z-Forms for the Universal Enveloping Algebras of Affine Lie Algebras
} University of California Riverside Ph.D. Dissertation (1999).
\bibitem{FoL} G.~Fourier and  P.~Littelmann {\em  Weyl modules, Demazure modules, KR-modules, crystals, fusion products and limit constructions} (2005), to appear in Advances in Mathematics
\bibitem{Ga} H. Garland {\em The Arithmetic Theory of Loop Algebras}, J. Algebra 53 (1978), 480 - 551.
\bibitem{K} M.~Kashiwara, {\em Crystal bases of modified quantized enveloping algebra},
Duke Math. J. 73 (1994), 383Ð413.
\bibitem{Mitz} D. Mitzman {\em Integral Bases for Affine Lie Algebras and Their Universal Enveloping Algebras}, Contemporary Mathematics 40 (1983).
\bibitem{Nak} H. Nakajima, {\em Quiver varieties and finite-dimensional
representations of quantum affine algebras}, J. Amer. Math. Soc. 14
(2001), no. 1, 145--238, math.RT/0308156.
\end{thebibliography}
\end{document}